\newtheorem{theorem}{Theorem}[section]
\newtheorem*{theoremA}{Theorem A}
\newtheorem*{theoremB}{Theorem B}
\newtheorem{lemma}{Lemma}[section]
\newtheorem{coro}{Corollary}[section]
\newcommand{\Real}{\operatorname{Re}}
\newcommand{\Imag}{\operatorname{Im}}
\def\ch{\chi}
\newcommand{\T}{\mathbb T}
\renewcommand{\Re}{\mbox{Re}}
\newenvironment{proof*}{\textbf{\emph{Proof of }}}{\hspace*{\fill} $\square$}
\numberwithin{equation}{section}
\author[Kristian Seip]{Kristian Seip}
\address{Department of Mathematical Sciences \\ Norwegian University of Science and Technology \\ NO-7491 Trondheim \\ Norway}
\email{kristian.seip@ntnu.no}
\thanks{Research supported in part by Grant 275113 of the Research Council of Norway.}
\begin{document}

\subjclass[2010]{11M41, 11B37, 30K10}
\title[Universality and distribution of zeros and poles of some zeta functions]{Universality and distribution of zeros and poles \\ of some zeta functions  } 

\maketitle
 
 \begin{abstract}

This paper studies zeta functions of the form $\sum_{n=1}^{\infty} \chi(n) n^{-s}$, with $\chi$ a completely multiplicative function taking only unimodular values. We denote by $\sigma(\chi)$ the infimum of those $\alpha$ such that the Dirichlet series $\sum_{n=1}^{\infty} \chi(n) n^{-s}$ can be continued meromorphically to the half-plane $\Real s>\alpha$, and denote by $\zeta_{\chi}(s)$ the corresponding meromorphic function in $\Real s>\sigma(\chi)$.  We construct $\zeta_{\chi}(s)$ that have $\sigma(\chi)\le 1/2$ and are universal for zero-free analytic functions on the half-critical strip $1/2<\Real s <1$, with zeros and poles at any discrete multisets lying in a strip to the right of $\Real s =1/2$ and satisfying a density condition that is somewhat stricter than the density hypothesis for the zeros of the Riemann zeta function. On a conceivable version of Cram\'{e}r's conjecture for gaps between primes, the density condition can be relaxed, and zeros and poles can also be placed at $\beta+i \gamma$ with 
$\beta\le 1-\lambda \log\log |\gamma|/\log |\gamma|$  when $\lambda>1$. Finally, we show that there exists $\zeta_{\chi}(s)$ with $\sigma(\chi) \le 1/2$ and zeros at any discrete multiset in the strip $1/2<\Real s \le 39/40$ with no accumulation point in $\Real s >1/2$; on the Riemann hypothesis, this strip may be replaced by the half-critical strip $1/2 < \Real s < 1$.
 \end{abstract}

\section{Introduction}
\subsection{Background} This paper centres around Bohr's approach  
to the Riemann hypothesis, 
originating in his discovery \cite{B} that in any sub-strip of
$1/2<\Real s <1$, the set of points $s$ at which the Riemann zeta function $\zeta(s)$ takes the value $a$ for a fixed complex number $a\neq 0$, has positive lower density. In view of the Bohr--Landau theorem~\cite{BL} on the density of the zeros of $\zeta(s)$ to the right of the critical line, this cannot be true for $a=0$. Hence, as concluded by Titchmarsh in \cite[Ch. 11]{T}  , ``... the value $0$ of $\zeta(s)$, if it occurs at all in $\sigma>1/2$, is at any rate quite exceptional, zeros being infinitely rarer than $a$-values for any value of $a$ other than zero.'' It seems that this state of affairs led Bohr and others to believe in the unlikeliness of such ``exceptional'' zeros and that the Riemann hypothesis could be proved by establishing that $\zeta(s)$ is quasi-periodic in an appropriate sense in the strip $1/2<\Real s <1$. While the Riemann hypothesis is indeed equivalent to an assertion about quasi-periodicity, as proved by Bagchi \cite{Ba} (see Theorem~B below), our aim is to show that there exist zeta functions with zeros located essentially anywhere in a strip to the right of $\Real s=1/2$, subject to a density restriction akin to the density hypothesis for the zeros of the Riemann zeta function, and whose value distribution properties otherwise cannot be easily distinguished from those of $\zeta(s)$.

The zeta functions that we will consider, are of the form 
\begin{equation} \label{eq:defzeta} \zeta_{\chi}(s):=\sum_{n=1}^{\infty} \chi(n) n^{-s}=\prod_{p} \frac{1}{\left(1-\chi(p) p^{-s}\right)}, \end{equation} 
where $\chi$ is a completely multiplicative function taking only unimodular values and the product to the right is over the sequence of prime numbers $p$. This definition and the equality to the right make sense for $\sigma:=\Real s>1$, as the abscissa of absolute convergence is $1$ for both the Dirichlet series and the Euler product in \eqref{eq:defzeta}. We let $\sigma(\chi)$ denote the infimum of those $\alpha$ such that the function defined by \eqref{eq:defzeta} can be continued meromorphically to the half-plane $\Real s>\alpha$. We use the same symbol $\zeta_{\chi}(s)$ for the meromorphic extension of $\sum_{n=1}^{\infty} \chi(n) n^{-s}$ to $\Real s>\sigma(\chi)$
(or to $\Real s\ge \sigma(\chi)$ if this makes sense) and declare it to be the Helson zeta function associated with~$\chi$. 
Our usage of the symbol $\chi$ comes from the identification of these functions as characters, as they constitute the (compact) dual group of the discrete multiplicative group of positive rationals $\langle\mathbb{Q}_+,\cdot\rangle$. This character group is closed also in the following analytic sense: The functions $\zeta_{\chi}(s)$ are precisely the vertical limit functions of $\zeta(s)$ in the half-plane $\sigma>1$, i.e., those functions that are obtained as limits of sequences of vertical translates $\zeta(s+i\tau)$,  with $\tau$ in $\mathbb{R}$. We refer to 
\cite{HLS} for more information about these points.

Strictly speaking, $\zeta(s)$ itself is the only Helson zeta function among all Dirichlet $L$-functions.  We may however think of any such $L$-function as a Helson zeta function since it can be made into a function of the form $\zeta_{\chi}(s)$ by multiplication by a finite Euler product, and such a transformation does not change the basic analytic properties of the $L$-function in the half-plane $\sigma>0$. For the same reason, we will have $\sigma(\chi)=-\infty$ for the corresponding characters $\chi$. 

We will sometimes think of the numbers $\chi(p)$---or a subsequence of these numbers---as a sequence of independent Steinhaus random variables.  According to this model, 
$\chi$ itself or a subsequence of the numbers $\chi(p)$ may be considered as a point on the infinite-dimensional torus $\T^{\infty}$, equipped with the natural  product probability measure. This measure is obtained as a product of normalized arc length measure on the unit circle for each of the variables $\chi(p)$. We have chosen to use the term ``Helson zeta function'' because Helson observed in \cite{H} that, almost surely, the Dirichlet series of $\zeta_{\chi}(s)$ converges and has no zeros in $\sigma>1/2$, whence in particular $\sigma(\chi)\le 1/2$ (see  also \cite[Cor. 4.7]{HLS}). This random model is commonly used in the study of statistical properties of $\zeta(s)$. We refer to the paper by Saksman and Webb \cite{SW}, where it was shown that in fact $\sigma(\chi)=1/2$ holds almost surely.

Helson's observation reflects in a rather compelling way the important point that the multiplicative structure of $\zeta_{\chi}(s)$, combined with moderate growth in the vertical direction, so to speak ``forces'' the zeros and poles of $\zeta_{\chi}(s)$ in $\sigma>1/2$, if any, towards the critical line. Familiar arguments in the theory of the Riemann zeta function allow us to establish quantitative results to this effect, for example a variant of the Bohr--Landau theorem, on the proviso that $\zeta_{\chi}(s)$ grows at most polynomially in the vertical direction.  One should bear in mind that there is no symmetry about the critical line in this respect, even when $\sigma(\chi)<1/2$. To see this, it suffices to choose $\chi(n)$ to be the Liouville function, i.e., $\chi(p)=-1$ for all $p$, so that $\zeta_{\chi}(s)=\zeta(2s)/\zeta(s)$. In this case, the poles and zeros of $\zeta_{\chi}(s)$ in the critical strip are expected to lie respectively on $\sigma=1/2$ and $\sigma=1/4$. 

The picture is, however, strikingly different in the general case, as we will see from Theorem~\ref{thm:nonuniversal} below: The geometry of the zeros of $\zeta_{\chi}(s)$ can, in the literal sense, be completely arbitrary in the strip $1/2<\Real s < 1$, at least if we assume the Riemann hypothesis to be true. Hence, with no a priori restriction on its meromorphic extension, the value distribution of $\zeta_{\chi}(s)$ may be rather more complex than and dramatically different from that of the Riemann zeta function.

Our analysis of Helson zeta functions will rely on an extension of the Voronin universality theorem \cite{V}, which is the most remarkable result in the line of research initiated by Bohr on the value distribution of $\zeta(s)$. We will state Voronin's theorem as it was developed in subsequent work of Reich \cite{R} and Bagchi \cite{Ba}. To this end, let $\Omega$ denote the strip $1/2<\Real s < 1$ and $H(\Omega)$ be the space of analytic functions on $\Omega$, equipped with the natural topology of locally uniform convergence; we let $H^*(\Omega)$ be the subset of those $h(s)$ in $H(\Omega)$ such that also $1/h(s)$ is in $H(\Omega)$. Moreover, we let $M(\Omega)$ be the larger space of meromorphic functions on $\Omega$, for which we use the topology of locally uniform convergence in the spherical metric.

Recall that the lower and upper density of a measurable set of positive real numbers $A$ are defined respectively as
\[ \underline{d}(A):=\liminf_{T\to\infty} \frac{m\left(A\cap \{ t: 0<t\le T\}\right)}{T} \quad \text{and} \quad 
\overline{d}(A):=\limsup_{T\to\infty} \frac{m\left(A\cap \{ t: 0<t\le T\}\right)}{T},\]
where $m$ denotes Lebesgue measure on the real line. If $\underline{d}(A)=\overline{d}(A)$, then $A$ has a density $d(A)$ which is this common value.
We say that a function $h(s)$ in $M(\Omega)$ is universal for $H^*(\Omega)$ if, given any $f(s)$ in $H^*(\Omega)$, $\varepsilon>0$, and compact subset $K$ of  $\Omega$, 
\[ \underline{d}\left(\left\{t: \max_{s\in K} |\ h(s+it)-f(s)|<\varepsilon \right\} \right) >0.\]
Bagchi's version of the universality theorem reads as follows \cite[Thm. 3.1]{Ba}.
\begin{theoremA} Every Dirichlet $L$-function is universal for $H^*(\Omega)$.
\end{theoremA}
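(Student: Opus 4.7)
The plan is to follow the probabilistic/ergodic strategy that Bagchi introduced, lifting the problem to the character group $\T^\infty$ and reducing universality to a support/rearrangement question in a Hilbert space of analytic functions. For $\omega=(\omega_p)_p$ distributed according to Haar measure on $\T^\infty$, set
\[
L(s,\chi;\omega):=\prod_p \bigl(1-\chi(p)\omega_p p^{-s}\bigr)^{-1}.
\]
The terms $\chi(p)\omega_p p^{-s}$ are independent, mean zero, with square-summable variance $p^{-2\sigma}$ when $\sigma>1/2$, so by Kolmogorov's three-series theorem and a routine analysis of the $k\ge 2$ contributions to the Euler logarithm, this product almost surely defines an element of $H^*(\Omega)$. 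Thus $\omega\mapsto L(\cdot,\chi;\omega)$ pushes Haar measure forward to a probability measure $\mu$ on $H^*(\Omega)$.

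Next comes the ergodic reduction. The vertical flow $T_t\omega:=(p^{-it}\omega_p)_p$ preserves Haar measure on $\T^\infty$ and, because $\{\log p\}_p$ is linearly independent over $\mathbb{Q}$, is uniquely ergodic on each finite-dimensional quotient, which upgrades to ergodicity on $\T^\infty$. The Bohr correspondence identifies the vertical translate $L(s+it,\chi)$ with a value of $\omega\mapsto L(\cdot,\chi;\omega)$ along this flow, and a standard weak-convergence argument (approximating the indicator of the open set $U_{f,K,\varepsilon}:=\{h\in H^*(\Omega):\max_{s\in K}|h(s)-f(s)|<\varepsilon\}$ by continuous functions and using Birkhoff) yields
\[
\underline{d}\bigl(\{t>0:L(\cdot+it,\chi)\in U_{f,K,\varepsilon}\}\bigr)=\mu(U_{f,K,\varepsilon}).
\]
The theorem thus reduces to the claim that $\mu$ has full support, i.e., that every $f\in H^*(\Omega)$ is approximable uniformly on compacta by some $L(\cdot,\chi;\omega)$.

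For this support statement, fix a holomorphic branch $g:=\log f\in H(\Omega)$, split the Euler logarithm into the main term $\sum_p\chi(p)\omega_p p^{-s}$ and an absolutely convergent remainder coming from $k\ge 2$, and reduce to finding phases $\omega_p\in\T$ with $\sum_p \chi(p)\omega_p p^{-s}$ close to $g$ on $K$. The key input here is a Pechersky/Levy--Steinitz type rearrangement theorem in a suitable Hilbert space $\mathcal{H}$ of analytic functions on a neighbourhood of $K$: one verifies that $\sum_p \|p^{-s}\|_{\mathcal{H}}^2<\infty$ but $\sum_p \|p^{-s}\|_{\mathcal{H}}=+\infty$, from which it follows that the set of conditional sums $\sum_p e^{i\theta_p}\chi(p)p^{-s}$ is dense in the closed linear span of $\{p^{-s}\}_p$ in $\mathcal{H}$. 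One then shows that this span is all of $\mathcal{H}$ by a duality argument of Bernstein--Beurling type, ultimately reducible to the non-vanishing of $\zeta$ on $\Real s=1$.

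The main obstacle is this last step: the system $\{p^{-s}\}_p$ is not orthogonal in any natural Hilbert space of holomorphic functions on $K$, so the rearrangement theorem and the density of the span must be established simultaneously and with a careful choice of $\mathcal{H}$ (for instance a weighted Bergman space on a slightly larger open set than $K$) that is continuously embedded in $H(K)$. Once the analytic rearrangement lemma is in hand, combining it with the probabilistic convergence of the Euler product and the ergodic reduction delivers Bagchi's theorem.
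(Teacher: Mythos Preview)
Theorem~A is quoted from Bagchi and not proved in the paper. The paper does give an alternative route to it via Theorem~\ref{thm:vor1} (equivalently Theorem~\ref{thm:vor2} together with Theorem~\ref{thm:BLthm}): for a Dirichlet $L$-function one checks finite order and bounded vertical mean squares in every $\overline{\mathbb{C}_\alpha}$ with $1/2<\alpha<1$, and then combines the approximation Lemma~\ref{lem:uni} with the mean-square bound \eqref{eq:logint} for $\log L-\log P_xL$. The Kronecker/rearrangement content is packaged entirely inside Lemma~\ref{lem:uni}; no probability measure on $\T^\infty$ is introduced.

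Your sketch is Bagchi's probabilistic scheme, and the support/rearrangement half is correctly identified. The gap is in the ergodic reduction. The flow $T_t$ on $\T^\infty$ is in fact uniquely ergodic, but unique ergodicity transfers time averages to space averages only for \emph{continuous} observables, and $\omega\mapsto L(\cdot,\chi;\omega)\in H(\Omega)$ is not continuous: it is not even defined on all of $\T^\infty$, since the Euler product diverges in $1/2<\sigma\le 1$ off a Haar-null set. For the specific point $\omega=\mathbf{1}$ the value $L(\cdot+it,\chi)$ in the strip is obtained by analytic continuation, not by the product, and nothing in your argument links this continuation to the random model. The standard fix is precisely the analytic input you omit: approximate $L(s+it,\chi)$ by the finite product $P_xL(s+it,\chi)$ \emph{in mean square on vertical lines in the strip}---using that $L$ has finite order and bounded second moments there---so that the error has small time-average, and then apply Weyl on the finite-dimensional torus to the continuous map $\omega\mapsto P_xL(\cdot,\chi;\omega)$. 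This mean-square step is exactly what the paper isolates as \eqref{eq:l22} (or \eqref{eq:logint}); without it your displayed identity for the lower density is unjustified.
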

In fact, an even stronger result concerning joint universality of the $L$-functions associated with the Dirichlet characters to a given modulus $k$ was proved in \cite{Ba}. In Section~\ref{sec:univ}, we will establish a general condition for Voronin universality  showing in particular that, almost surely, $\zeta_{\chi}(s)$ is universal for $H^*(\Omega)$. 

By a slight extension of  Bagchi's notion of strong recurrence \cite{Ba}, we say that a function $h(s)$ in $M(\Omega)$ is a strongly recurrent point for vertical translations if for every compact subset $K$ of $\Omega$ and $\varepsilon>0$,
\begin{equation}\label{eq:lowup} \overline{d}\left(\left\{\tau: \max_{s\in K}\frac{|h(s+i\tau)-h(s)|}{(1+|h(s+i\tau)|)(1+|h(s)|)}<\varepsilon \right\} \right) >0.\end{equation}
In \cite[Thm. 3.7]{Ba}, Bagchi used Theorem~A and the Bohr--Landau theorem for Dirichlet $L$-functions to establish the following equivalence\footnote{It may be observed that Theorem~B would remain true if we in \eqref{eq:lowup} had used the lower density instead of the upper density to define ``strong recurrence''.} for the generalized Riemann hypothesis.

\begin{theoremB}
A Dirichlet $L$-function is zero-free in the half-plane $\Real s>1/2$ if and only if it is a strongly recurrent point for vertical translations in $M(\Omega)$. 
\end{theoremB}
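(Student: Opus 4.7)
The plan is to treat the two implications separately. The forward direction (zero-free $\Rightarrow$ strongly recurrent) is essentially immediate from Theorem~A, while the converse is an application of Rouch\'e's theorem combined with the Bohr--Landau density estimate.

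For the forward direction, assume the Dirichlet $L$-function $L(s)$ is zero-free in $\Omega$, so that $L\in H^{*}(\Omega)$. Applying Theorem~A with $f=L$ gives, for every compact $K\subset \Omega$ and every $\eps>0$, a set of $\tau$ of positive lower density on which $\max_{s\in K}|L(s+i\tau)-L(s)|<\eps$. Since
\[ \frac{|L(s+i\tau)-L(s)|}{(1+|L(s+i\tau)|)(1+|L(s)|)}\le |L(s+i\tau)-L(s)|,\]
the same set also witnesses condition~\eqref{eq:lowup} (with lower, hence upper, density positive), so $L$ is strongly recurrent.

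For the converse I argue contrapositively: suppose $L(s_0)=0$ for some $s_0$ with $\Real s_0>1/2$, and choose $r>0$ so small that $K=\overline{D(s_0,r)}\subset\Omega$ and $s_0$ is the only zero of $L$ in $K$. Put $M=\max_{s\in K}|L(s)|$ and $\dl=\min_{|s-s_0|=r}|L(s)|>0$. If $L$ were strongly recurrent, then for a suitably small $\eps>0$ there would be a set $E_\eps$ of positive upper density such that the $\eps$-bound in \eqref{eq:lowup} holds on $K$ for every $\tau\in E_\eps$. Using $|L(s)|\le M$ and the triangle inequality, this ratio bound first forces a uniform estimate of the form $|L(s+i\tau)|\le 2M+1$ on $K$ (for $\eps$ small); substituting this back into the numerator yields the genuine Euclidean closeness $|L(s+i\tau)-L(s)|<2(M+1)^{2}\eps$ on $K$. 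Choosing $\eps$ so small that $2(M+1)^{2}\eps<\dl$, Rouch\'e's theorem on $\partial K$ supplies a zero of $L(\cdot+i\tau)$ in $K$, i.e., a zero of $L$ in the translated disk $K+i\tau$, for every $\tau\in E_\eps$.

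A short spacing argument---select a $2r$-separated subset of $E_\eps\cap [0,T]$, of size $\gtrsim T$ along a sequence $T\to \infty$---then produces disjoint disks each contributing a distinct zero of $L$ inside the vertical strip $\{\Real s\ge \Real s_0-r\}\cap\{0<\Imag s\le T\}$. This gives a lower bound of order $T$ for the zero-counting function $N(\Real s_0-r,T)$, contradicting the Bohr--Landau theorem for Dirichlet $L$-functions, which provides the upper bound $N(\sigma,T)=o(T)$ for every fixed $\sigma>1/2$ (here applied with $\sigma=\Real s_0-r$). The main obstacle is the very first manipulation of the converse---translating the ratio inequality \eqref{eq:lowup}, which is weaker than sup-norm closeness, into a genuine $C^{0}$-bound on $L(s+i\tau)$ and thence into Euclidean closeness. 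Once this two-step bootstrap is carried out, the remaining geometric (Rouch\'e), combinatorial (spacing), and analytic (Bohr--Landau) ingredients are standard.
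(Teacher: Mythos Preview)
Your proof is correct and follows essentially the same approach as the paper. The paper attributes Theorem~B to Bagchi and does not give a self-contained proof, but in the proof of the Corollary in Section~\ref{sec:univ} it sketches exactly your strategy: the ``if'' part is immediate from universality (Theorem~A applied with $f=L$), and the ``only if'' part uses Rouch\'{e}'s theorem to show that a zero together with strong recurrence would produce $N(\sigma,T)\ge cT$ for some $\sigma>1/2$, contradicting the Bohr--Landau estimate. Your bootstrap from the spherical bound \eqref{eq:lowup} to genuine sup-norm closeness is a necessary detail that the paper suppresses, and your spacing/covering argument to count distinct zeros is the standard way to make the linear lower bound rigorous.
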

In Section~\ref{sec:univ}, we will observe that this theorem extends as well to a wide class of Helson zeta functions, and we may in particular conclude that $\zeta_{\chi}(s)$ is almost surely a strongly recurrent point for vertical translations in $M(\Omega)$. 

\subsection{Statement of main results} 
The Bohr--Landau theorem\footnote{A bound of the form $O(T^{1-\varepsilon})$ was first established by Carlson \cite{Ca}, by a refinement of the work of Bohr and Landau \cite{BL} which only established the bound $o(T)$.} asserts that 
\begin{equation}  \label{eq:BLthm} N(\sigma, T)=O\left(T^{1-\varepsilon}\right) \end{equation}
for some $\varepsilon=\varepsilon(\sigma)$, $0<\varepsilon<1$, whenever $\sigma>1/2$, where as usual $N(\sigma,T)$ denotes the number of zeros $\rho=\beta+i\gamma$ of $\zeta(s)$ satisfying $\beta>\sigma$ and $0<\gamma \le T$. It is clear that a similar sparseness condition for the zeros and the poles of $\zeta_{\chi}(s)$ must be required for $\zeta_{\chi}(s)$ to be universal for $H^*(\Omega)$. A slight adjustment of the conclusion \eqref{eq:BLthm} of the Bohr--Landau theorem, called the ``Bohr--Landau condition'', will therefore play a pivotal role in our treatment of universality. 
 
On the other hand, we may ask whether any sequence satisfying a condition similar to \eqref{eq:BLthm} may constitute the zeros of a Helson zeta function that is universal for $H^*(\Omega)$. In fact, keeping in mind that $\zeta_{-\chi}(s)=\zeta_{\chi}(2s)/\zeta_{\chi}(s)$ and hence that the zeros of $\zeta_{\chi}(s)$ coincide with the poles of $\zeta_{-\chi}(s)$ in $\Real s>1/2$, and vice versa, we may ask the more general question of whether zeros and poles can be placed anywhere, subject to a sparseness condition like \eqref{eq:BLthm}. Our first theorem gives essentially an affirmative answer on the proviso that the points stay sufficiently close to the critical line.

We will use the following terminology. We say that a set of points $\mathcal{Z}$ in the complex plane is a signed multiset if there is a multiplicity $m_{\mathcal{Z}}(\rho)$ in $\mathbb{Z}\setminus \{0\}$ associated with every $\rho$ in $\mathcal{Z}$. We may declare that $m_{\mathcal{Z}}(s):=0$ if $s$ is not in $\mathcal{Z}$ and define the sum $\mathcal{Z}+\mathcal{Y}$ of two signed multisets $\mathcal{Z}$ and $\mathcal{Y}$ to be the set of numbers $\rho$ such that 
$m_{\mathcal{Z}+\mathcal{Y}}(\rho):=m_{\mathcal{Z}}(\rho)+m_{\mathcal{Y}}(\rho)\neq 0$. The set of zeros and poles of a meromorphic function $h(s)$ constitutes in an obvious way a signed multiset, which we will denote by $Z(h(s))$. We observe that $Z(h(s)g(s))=Z(h(s))+Z(g(s))$ if two meromorphic functions $h(s)$ and $g(s)$ are defined on the same domain. 
A signed multiset $\mathcal{Z}$ is said to be a multiset if $m_\mathcal{Z}(\rho)>0$ for all $\rho$ in $\mathcal{Z}$. We write $Z^+(h(s))$ for the multiset of zeros of the meromorphic function $h(s)$.
We will frequently refer to (signed) multisets as ordinary sets, without explicit reference to the associated multiplicity of its elements. In particular, we will permit ourselves to think of (signed) multisets as subsets of ordinary sets and to perform intersections with ordinary sets.

When the signed multiset $\mathcal{Z}$ is a subset of a domain $\Delta$, we say that it is locally finite in $\Delta$ if there are only finitely many points $\rho$ from $\mathcal{Z}$ in each compact subset of $\Delta$. For a locally finite signed multiset~$\mathcal{Z}$ in $\Real s > 1/2$, we have the following natural analogue of the counting function in \eqref{eq:BLthm}: 
\[ N_{\mathcal{Z}}(\sigma,T):= \sum_{\rho=\beta+i\gamma \in \mathcal{Z}: \beta>\sigma,  |\gamma|\le T} |m_{\mathcal{Z}}(\rho)| \]
for $\sigma\ge 1/2$.
The Bohr--Landau condition will simply be that $N_{\mathcal{Z}}(\sigma,T)=o(T)$ for every $\sigma>1/2$, which is a slight weakening of \eqref{eq:BLthm}.
Our condition for $N_{\mathcal{Z}}(\sigma,T)$ will depend on our knowledge of large prime gaps. Unconditionally, thanks to 
a theorem of Baker, Harman, and Pintz \cite{BHP}, we know that 
\begin{equation} \label{eq:primegap} \pi(x+\xi) - \pi(x)\gg \xi/\log x \end{equation}
holds for large $x$ when $\xi=x^{21/40}$, where as usual $\pi(x)$ is the number of primes not exceeding $x$.   

We are now ready to state the unconditional version of our first main result. In view of the Bohr--Landau condition, we observe that condition (c) below is essentially optimal close to $\sigma=1/2$. 

\begin{theorem}\label{thm:main}
Let $\mathcal{Z}$ be any locally finite signed multiset in the half-plane $\Real s > 1/2 $ such that 
\begin{itemize}
\item[(a)] $\mathcal{Z}$ is a subset of the strip $1/2 < \Real s \le \alpha$ for some $1/2<\alpha<59/80$. 
\end{itemize}
Suppose also that $\mathcal{Z}$ satisfies the following conditions:
\begin{itemize} 
\item[(b)] $N_{\mathcal Z}(\sigma,T+1)-N_{\mathcal Z}(\sigma, T)=O(T^{\varepsilon})$ for every $\sigma>1/2$ and $\varepsilon>0$;
\item[(c)] $N_{\mathcal{Z}}(\sigma,T)=O\big(T^{\frac{\alpha-\sigma}{\alpha+\sigma-1}}\big)$. 
\end{itemize}  
 Then there exists a Helson zeta function $\zeta_{\chi}(s)$ with $\sigma(\chi)\le 1/2$ so that 
\begin{itemize} 
\item[(i)] $\mathcal{Z}$ is the set of zeros and poles of $\zeta_{\chi}(s)$ in $\Real s >1/2$;
\item[(ii)] $\zeta_{\chi}(s)$ is universal for $H^*(\Omega)$;
\item[(iii)] $\zeta_{\chi}(s)$ is not a strongly recurrent point for vertical translations in $M(\Omega)$. 
\end{itemize}   
\end{theorem}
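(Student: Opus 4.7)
The plan is to construct $\chi$ as a modification of a generic unimodular character $\psi$, so that the Helson zeta function factors as
\[
\zeta_\chi(s) = F(s)\, G(s)
\]
in $\Real s > 1/2$, where $F(s)$ is a meromorphic function with $Z(F)=\mathcal{Z}$ and $G(s)$ is a holomorphic, zero-free Helson-type factor to which the universality criterion of Section~\ref{sec:univ} applies.

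First I would build $F(s)$ as an infinite product of Weierstrass-type elementary factors, one for each point of $\mathcal{Z}$ counted with sign $m_{\mathcal{Z}}(\rho)$. Conditions (b) and (c) together guarantee locally uniform convergence of this product in $\Real s > 1/2$ to a meromorphic function with exactly the singularities prescribed by $\mathcal{Z}$ and with polynomial vertical growth; hypothesis (a) confines the singularities to $1/2 < \Real s \le \alpha$, keeping $F$ well behaved near the critical line. The core technical step is then to realize $F$ inside a Helson zeta function. For this I would use the Baker--Harman--Pintz bound \eqref{eq:primegap} to select blocks of primes in short intervals $[x, x+x^{21/40}]$ along a sequence $x\to\infty$. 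Within each block the phases $\chi(p)$ are chosen, by a Kronecker-type approximation, so that the Dirichlet polynomial $\sum_p (\chi(p)-\psi(p))\,p^{-s}$ encodes a truncation of $\log F(s)$ accurately at heights $T \sim x$; on primes outside every block one sets $\chi(p) = \psi(p)$ for independent uniform unimodular $\psi(p)$. The hypothesis $\alpha < 59/80$ translates to $2(1-\alpha) > 21/40$, which is exactly what gives enough frequency resolution per block to place zeros and poles anywhere in $1/2 < \Real s \le \alpha$ without polluting $\Real s > 1/2$ with unwanted singularities.

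Statement (i) then amounts to verifying that $\zeta_\chi/F$ extends holomorphically and without zeros to $\Real s > 1/2$, which follows from the approximation estimates in the previous step. For (ii) I would apply the general criterion of Section~\ref{sec:univ} to the essentially random factor $G$, giving almost-sure universality for $H^*(\Omega)$; universality transfers to $\zeta_\chi$ since for most $\tau$ the compact set $K$ is far from every translate of $\mathcal{Z}$, so $F(s+i\tau)$ is holomorphic and zero-free on $K$, and any $f \in H^*(\Omega)$ can be approximated by $F(\cdot+i\tau')G(\cdot+i\tau')$ by choosing $\tau'$ close enough to $\tau$ to freeze $F$ while using universality of $G$ to approximate $f/F(\cdot+i\tau)$. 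Conclusion (iii) is the easiest: the existence of any zero or pole of $\zeta_\chi$ strictly to the right of the critical line is incompatible with strong recurrence in $M(\Omega)$, since strong recurrence would force, on a set of $\tau$ of positive upper density, approximate copies of the singularity at translates $\rho+i\tau$, contradicting the Bohr--Landau-type bound for $\zeta_\chi$ that is available from its polynomial vertical growth.

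The main obstacle is the explicit block construction in Step~2. One must simultaneously embed $\mathcal{Z}$ exactly, avoid creating spurious zeros or poles elsewhere in $\Real s > 1/2$, preserve the almost-sure universality contributed by the random $\psi$, and maintain $\sigma(\chi) \le 1/2$. Balancing these requirements against the short-interval prime bound \eqref{eq:primegap} is precisely what forces the numerical threshold $\alpha < 59/80$ and the density condition (c); this is where essentially all the arithmetic content of the theorem is concentrated.
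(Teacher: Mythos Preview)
Your broad architecture---split $\zeta_\chi$ into a deterministic piece carrying $\mathcal Z$ and a random piece on the remaining primes---matches the paper's, and the ingredients (Baker--Harman--Pintz short intervals, Steinhaus phases on the complementary primes) are the right ones. But the argument for (ii) has a genuine gap, and the construction step is carried out quite differently in the paper.

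The gap is the transfer of universality from $G$ to $\zeta_\chi=FG$. You want to fix $\tau_0$ with $F(\cdot+i\tau_0)$ zero- and pole-free on $K$, use universality of $G$ to find a positive-density set of $\tau$ with $G(\cdot+i\tau)\approx f/F(\cdot+i\tau_0)$, and then ``freeze'' $F$. But on that set you would also need $F(\cdot+i\tau)\approx F(\cdot+i\tau_0)$, and a Weierstrass product over $\mathcal Z$ has no vertical almost-periodicity that would force this; the phrase ``choosing $\tau'$ close enough to $\tau$'' does not help, since universality gives a density statement, not proximity to any particular $\tau_0$. The paper sidesteps this entirely: it applies Theorem~\ref{thm:vor2} to $\zeta_\chi$ itself, writing $\log\zeta_\chi-\log P_x\zeta_\chi$ as a sum of pieces and checking the mean-square condition \eqref{eq:logint} for each. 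The substantial work is the mean square of $D(s)-\sum_{p\le x}\chi(p)p^{-s}$, decomposed as $F_k+I_k+S_k+\Sigma_k$ and estimated via Lemma~\ref{lem:secmom} and Lemma~\ref{lem:Isec}. The threshold $\alpha<59/80$ emerges precisely here, from the balance $x_k\sim T^{1/(2\alpha-1)}$ against $3-2\alpha-2\theta\ge 2\alpha-1$ with $\theta=21/40$, rather than from a ``frequency resolution'' heuristic.

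On the construction side, the paper never builds a Weierstrass product $F$. It works with the logarithmic derivative: set $R(s)=\sum_\rho m_{\mathcal Z}(\rho)\,y_\rho^{\rho-s}/(s-\rho)$ with the specific damping $y_\rho=(|\gamma|+1)^{1/(\alpha+\beta-1)}$, define $q(x)=\sum_{y_\rho\le x} m_{\mathcal Z}(\rho)\,x^{\rho-1}$, and choose primes $p\in\mathcal P$ and unimodular $\chi(p)$ inductively so that $\sum_{p\in\mathcal P,\,p\le x}\chi(p)\log p=\int_1^x q(y)\,dy+O(x^{\alpha+\theta-1}\log x)$. Then $D'(s)-R(s)$ is analytic in $\mathbb C_{1/2}$ and Lemma~\ref{lem:exist} recovers $\zeta_{\chi;\mathcal P}$ with exactly the singularities of $\mathcal Z$. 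The particular choice of $y_\rho$ is what makes condition~(c) exactly sufficient for the convergence of $R(s)$ and for the mean-square bounds on $S_k$ and $\Sigma_k$; a generic Weierstrass product would not give you this control. Your sketch of (iii) via Rouch\'e and Bohr--Landau is fine and agrees with the paper's Corollary to Theorem~\ref{thm:vor2}.
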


On the Riemann hypothesis, \eqref{eq:primegap} would still hold for $\xi=c x^{1/2} \log x$ and a suitable constant $c$. This would allow us to replace the fraction  $59/80$  in (a) by $3/4$. For our purposes, however, it would suffice to know a little less than \eqref{eq:primegap}, for example that, say,
\begin{equation} \label{eq:primegapc} \pi(x+\xi)-\pi(x)\ge  \frac{\xi}{(\log x)^{2+\varepsilon}} \end{equation}
for some $\varepsilon>0$ and suitable $\xi$, depending on $x$. A well known conjecture of Cram\'{e}r in the distribution of prime numbers \cite{Cr2, Gr}, based on his famous random model, asserts that $G(x)=O\big((\log x)^2\big)$, where $G(x)$ is the distance from $x$ to the smallest prime larger than $x$. While some doubt has been cast on this conjecture \cite{P2}, it seems still conceivable, as hinted at by Pintz in \cite{P1}, that $G(x)=O\big((\log x)^{2+\varepsilon}\big)$ may hold for every $\varepsilon>0$. If this were true, then the  upper bound $59/80$  in (a) would be increased to the optimal value $1$. We could then choose $\alpha$ arbitrarily close to $1$. 

We notice at this point that our density condition (c) is similar to the density hypothesis for the zeros of $\zeta(s)$, i.e. the famous unproven assertion
\[ N(\sigma,T)=\left(T^{1-2(\sigma-1/2)+\varepsilon} \right) \]
that arose from Ingham's work \cite{In}. However, even with $\alpha$ arbitrarily close to $1$, our condition is still weaker than the density hypothesis. 
It would be interesting to sharpen (c) so that our density condition would be ``in accordance'' with the density hypothesis on the assumption that $\alpha$ can be chosen arbitrarily close to $1$. 

Our next theorem is a conditional variant of Theorem~\ref{thm:main} in which we address what happens if we go one step further and allow the points of $\mathcal{Z}$ to approach the $1$-line:

\begin{theorem}\label{thm:main3}
Assume that $G(x)=O\big((\log x)^{2+\varepsilon}\big)$ for every $\varepsilon>0$. Let $\mathcal{Z}$ be any locally finite signed multiset in the half-plane $\Real s > 1/2 $ such that  for some $\lambda>\kappa>1$ 
\begin{itemize}
\item[(a)] every $\sigma=\beta+i\gamma$ in $\mathcal{Z}$ satisfies $|\gamma|\ge e^e$ and
$ 1/2<\beta \le 1-\lambda \frac{\log\log |\gamma|}{\log |\gamma|} $;
\item[(b)] $N_{\mathcal Z}(1/2,T)=O\left((\log T)^{\kappa-1}\right)$.
\end{itemize}  
 Then there exists a Helson zeta function $\zeta_{\chi}(s)$ with $\sigma(\chi)\le 1/2$ so that 
\begin{itemize} 
\item[(i)] $\mathcal{Z}$ is the set of zeros and poles of $\zeta_{\chi}(s)$ in $\Real s >1/2$;
\item[(ii)] $\zeta_{\chi}(s)$ is universal for $H^*(\Omega)$;
\item[(iii)] $\zeta_{\chi}(s)$ is not a strongly recurrent point for vertical translations in $M(\Omega)$. 
\end{itemize}   
\end{theorem}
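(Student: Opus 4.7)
My plan is to adapt the construction used to prove Theorem~\ref{thm:main} to the new regime, trading the Baker--Harman--Pintz prime-gap bound for the weaker hypothesis $G(x)=O\big((\log x)^{2+\varepsilon}\big)$ and trading the polynomial density condition (c) of that theorem for the polylogarithmic density (b) above. Conclusions (ii) and (iii) will then follow formally: once (i) is in place, (ii) is immediate from the general Voronin-type universality criterion to be established in Section~\ref{sec:univ}, and (iii) follows from (i) together with the Helson-zeta-function extension of Theorem~B, since having any zero in $\Real s>1/2$ prevents strong recurrence.

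The construction of $\chi$ for (i) is inductive. I would partition the points of $\mathcal{Z}$ into vertical windows $I_k=\{s:|\Im s|\in(T_{k-1},T_k]\}$ with $T_k\to\infty$ chosen so that each $I_k$ carries only $O\big((\log T_k)^{\kappa-1}\big)$ of them, reserve at stage $k$ a fresh, previously unused block $\mathcal{P}_k$ of primes of size roughly $T_k^{A}$ for a suitable $A>1$, and specify the unimodular values $\chi(p)$, $p\in\mathcal{P}_k$, so as to install the prescribed zeros or poles of $\zeta_{\chi}(s)$ in $I_k$ while leaving those secured at earlier stages essentially undisturbed. On primes outside $\bigcup_k\mathcal{P}_k$, the values $\chi(p)$ are taken as independent Steinhaus random variables; by Helson's theorem this yields $\sigma(\chi)\le 1/2$ almost surely, and by the universality criterion of Section~\ref{sec:univ} also almost-sure universality for $H^\ast(\Omega)$, provided the sacrificed set $\bigcup_k\mathcal{P}_k$ is sufficiently sparse among all primes---something guaranteed by (b) together with the polynomial size of each $\mathcal{P}_k$.

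The local calculation for a single target $\rho=\beta+i\gamma$ runs as follows. For primes $p$ of size $N\asymp |\gamma|^A$, the bound $\beta\le 1-\lambda\log\log|\gamma|/\log|\gamma|$ translates into $|p^{-\rho}|\gtrsim N^{-1}(\log N)^{\lambda'}$ for any $\lambda'<A\lambda$, so that a polylogarithmic-in-$|\gamma|$ number of such primes produces a Dirichlet polynomial whose values at $s=\rho$ sweep a disc in $\mathbb{C}$ of radius $\gg 1$---more than enough to cancel the bounded contribution of previously specified coefficients and force a zero. The weak Cram\'{e}r hypothesis guarantees that any short interval $[N,N+N^{1-\delta}]$ contains $\asymp N^{1-\delta}/\log N$ primes, and since the phases $e^{i\gamma\log p}$ for $p$ stepping through such an interval change by $\gtrsim |\gamma|/N$ at each prime step, they equidistribute on $\T$ on a scale finer than any polylogarithmic tolerance as soon as $A>1$. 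The strict inequality $\lambda>\kappa$ is precisely what leaves a usable safety margin for absorbing cross-interactions between the different targets in a common window $I_k$ and between the targets of $I_k$ and the tails of previously fixed blocks.

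The chief obstacle, as in Theorem~\ref{thm:main}, is to verify that no extraneous zero or pole appears in $\Real s>1/2$. For this I would rely on a Jensen-type counting argument: the supremum norm of the Dirichlet polynomial contributed by $\mathcal{P}_k$ on each vertical line $\Real s=\sigma>1/2$ is tightly controlled by construction, so that any extra zeros it generates in $I_k$ are bounded in number by the same polylogarithmic budget as in (b); the freedom in the free phases of $\mathcal{P}_k$ afforded by the $\lambda>\kappa$ margin can then be used to cancel them or to push them safely onto $\Real s=1/2$. Combined with the almost-sure statements on the unconstrained primes, this produces a $\chi$ realising (i)--(iii) simultaneously.
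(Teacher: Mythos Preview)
Your proposal departs substantially from the paper's proof and has a genuine structural gap. The paper does not try to force zeros of $\zeta_{\chi}$ directly by choosing finitely many phases $\chi(p)$ so that a Dirichlet polynomial vanishes at each target $\rho$; making that precise would indeed leave you with the ``chief obstacle'' you flag---controlling all extraneous zeros and poles---and your remedy (Jensen counting plus ``pushing them onto $\Real s=1/2$'') is not something one can actually carry out. The paper instead works with the logarithmic derivative. One writes $R(s):=\sum_{\rho} m_{\mathcal Z}(\rho)\,y_{\rho}^{\rho-s}/(s-\rho)$ with $y_{\rho}:=|\gamma|$, so that $R(s)$ is meromorphic with a simple pole of residue $m_{\mathcal Z}(\rho)$ at each $\rho$ and no other singularities. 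Its Mellin kernel $q(x):=\sum_{|\gamma|\le x} m_{\mathcal Z}(\rho)\,x^{\rho-1}$ satisfies, by (a) and (b), $|q(x)|\ll (\log x)^{-1-(\lambda-\kappa)}$; this is where the strict inequality $\lambda>\kappa$ is actually used. One then picks $\mathcal P$ and unimodular $\chi(p)$ inductively on intervals $[x_k,x_{k+1})$ with $x_{k+1}=x_k+C(\log x_k)^{2+\varepsilon}$ (each containing a prime by the Cram\'er-type hypothesis, and needing at most one prime since $q$ is so small) so that $\sum_{p\in\mathcal P,\,p\le x}\chi(p)\log p=\int_1^x q(y)\,dy+O(\log x)$. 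Partial summation then makes $D'(s)-R(s)$ analytic in $\mathbb C_{1/2}$, and Lemma~\ref{lem:exist} converts this into $Z(\zeta_{\chi;\mathcal P})\cap\mathbb C_{1/2}=\mathcal Z$ exactly---no extraneous zeros or poles can arise, because the poles of $D'(s)$ are precisely those of $R(s)$.

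Two further points. First, universality (ii) is not automatic from Section~\ref{sec:univ}: Theorem~\ref{thm:vor2} requires one to verify the mean-square condition \eqref{eq:logint}, and the paper spends real work on this, decomposing $D(s)-\sum_{p\le x}\chi(p)p^{-s}$ into four pieces $F_k+I_k+S_k+\Sigma_k$ with $x_k\sim T$ and estimating each (Subsections~7.4 and~7.5). Your outline does not address this. Second, a small but telling slip: the hypothesis $G(x)=O((\log x)^{2+\varepsilon})$ does \emph{not} give $\pi(N+N^{1-\delta})-\pi(N)\asymp N^{1-\delta}/\log N$; it only guarantees a prime in every interval of length $\gg(\log x)^{2+\varepsilon}$, hence at best $\gtrsim N^{1-\delta}/(\log N)^{2+\varepsilon}$ primes in $[N,N+N^{1-\delta}]$. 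The paper's construction needs (and uses) nothing more than one prime per short interval.
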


Conclusion (iii) of either of the two theorems above may be strengthened if an additional restriction is put on $\mathcal{Z}$. To see this, we need the following terminology. In contrast to the notion of strong recurrence, we say that $h(s)$ in $M(\Omega)$ is a wandering point for vertical translations if there exist a compact subset $K$ of $\Omega$ and $\varepsilon>0$ such that 
\[ \max_{s\in K}\frac{|h(s+i\tau)-h(s)|}{(1+|h(s+i\tau)|)(1+|h(s)|)} \ge \varepsilon \]
for every sufficiently large $\tau$. We notice that if $h(s)$ has only a finite number of zeros and poles in any strip $1/2+\varepsilon \le \Real s \le 1-\varepsilon $, then $h(s)$ is a wandering point for vertical translations. We see this by choosing $K$ to be a closed disc centred at any of the zeros or poles of $h(s)$, with $K$ so small that $K\subset \Omega$ and there are no other zeros or poles in $K$.  For every sufficiently large translation parameter $\tau$, there is neither a zero nor a pole in $K+i\tau$, and for such $\tau$ we may use Rouch\'{e}'s theorem to conclude. 

Hence, in the special case when $\mathcal Z$ has finitely many points in  $1/2+\varepsilon \le \Real s\le 1-\varepsilon$ for every $\varepsilon$, $0<\varepsilon<1/4$, we may replace conclusion (iii) of both Theorem~\ref{thm:main} and Theorem~\ref{thm:main3} by the following stronger assertion: 
\begin{itemize}
\item[(iii')] $\zeta_{\chi}(s)$ \emph{is  a wandering point for vertical translations in} $M(\Omega)$.
\end{itemize}  

Theorem~\ref{thm:main} requires the zeros and poles to be at a positive distance to the $1$-line. We have not been able to improve the upper bound $\alpha < 59/80$ unconditionally, but curiously, via $\zeta(s)$ itself, we are indeed able to place a pole at the ``extreme'' point $s=1$:  

\begin{theorem}\label{thm:main2}
Let $\alpha<\nu$ be two numbers in the interval $(1/2,59/80]$ and $\mathcal{Z}$ be any locally finite signed multiset in the half-plane $\Real s > 1/2 $ such that 
\begin{itemize}
\item[(a)] $m_{\mathcal{Z}}(\nu)>0$ and $\mathcal{Z}\setminus \{\nu\}$ is a subset of the strip $1/2 < \Real s < \alpha$ .
\end{itemize}
Suppose also that $\mathcal{Z}$ satisfies the following conditions:
\begin{itemize} 
\item[(b)] $N_{\mathcal Z}(\sigma,T+1)-N_{\mathcal Z}(\sigma, T)=O(T^{\varepsilon})$ for every $\sigma>1/2$ and $\varepsilon>0$;
\item[(c)] $N_{\mathcal{Z}}(\sigma,T)=O\big(T^{\frac{\alpha-\sigma}{\alpha+\sigma-1}}\big)$. 
\end{itemize}  
 Then there exists a Helson zeta function $\zeta_{\chi}(s)$ with $\sigma(\chi)\le 1/2$ so that 
\begin{itemize} 
\item[(i)] the set of zeros and poles of $\zeta_{\chi}(s)$ in $\Real s>1/2$ is the restriction to this half-plane of the signed multiset $Z(\zeta(s))+\mathcal{Z}$;
\item[(ii)] $\zeta_{\chi}(s)$ is universal for $H^*(\Omega)$; 
\item[(iii)] $\zeta_{\chi}(s)$ is not a strongly recurrent point for vertical translations in $M(\Omega)$.
\end{itemize}   
\end{theorem}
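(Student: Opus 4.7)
The strategy is to arrange $\zeta_{\chi}(s) = \zeta(s)\, H(s)$ where $H(s)$ is meromorphic on $\Real s > 1/2$ with $Z(H(s)) = \mathcal{Z}$. In the region of absolute convergence,
\[
 \log H(s) = \log \zeta_{\chi}(s) - \log \zeta(s) = \sum_{p} \sum_{k=1}^{\infty} \frac{\chi(p)^{k}-1}{k}\, p^{-ks},
\]
so, writing $\chi(p) = e^{i\theta_{p}}$, the task becomes to choose the phases $\theta_{p}$ so that the double sum continues meromorphically to $\Real s > 1/2$ with precisely the singularities dictated by $\mathcal{Z}$. The trivial choice $\theta_{p}\equiv 0$ gives $H\equiv 1$ and recovers $\zeta_{\chi}=\zeta$; the perturbation is what will produce the prescribed zeros and poles of $H$.

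To select the phases, I would rerun the construction underpinning Theorem~\ref{thm:main} in this ``relative to $\zeta$'' setting. Enumerate $\mathcal{Z}$ as $\{\rho_{j}\}$, reserve for each $\rho_{j}$ a prime block $B_{j} \subset [x_{j}, x_{j} + x_{j}^{21/40}]$ (prime-rich by the Baker--Harman--Pintz bound \eqref{eq:primegap}), and prescribe the $\theta_{p}$ for $p\in B_{j}$ so that the block contribution $\sum_{p\in B_{j}} \sum_{k}(\chi(p)^{k}-1)p^{-ks}/k$ reproduces the signed logarithmic singularity $m_{\mathcal{Z}}(\rho_{j})\log(s-\rho_{j})$ at $s=\rho_{j}$ modulo a holomorphic error. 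Conditions (b) and (c) are calibrated so that summing over $j$ yields a meromorphic $H$ on $\Real s > 1/2$ with no spurious singularities, while the bound $\alpha\le 59/80$ is exactly the largest real part reachable given the resolution $\xi = x^{21/40}$. The extra zero at $\nu \in (\alpha,59/80]$ is the ``anchor'' that the construction needs in the outer strip $\alpha < \Real s \le 59/80$, both because the enumeration of $\mathcal{Z}$ must place at least one point there to use the full resolution of the block construction and because it provides the isolated interior witness needed for (iii). Phases on primes outside every $B_{j}$ (a set of full density, by (c)) are drawn independently from the Steinhaus model, which by Helson's observation makes $\sigma(\chi) \le 1/2$ almost surely and supplies the randomness required for universality. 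The principal obstacle is to verify that the \emph{signed} summand $(\chi(p)^{k}-1)p^{-ks}/k$---which is smaller than $\chi(p)^{k}p^{-ks}/k$ when $\chi(p)$ is close to $1$---can still be made to produce the prescribed meromorphic singularities without the free-prime contribution polluting the zero/pole set: this is the same analytic heart as Theorem~\ref{thm:main}, run relative to $\zeta$.

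Conclusions (i)--(iii) then follow. For (i), $Z(\zeta_{\chi}(s)) = Z(\zeta(s)) + Z(H(s)) = Z(\zeta(s)) + \mathcal{Z}$ in $\Real s > 1/2$ is immediate from the factorization. For (ii), the general Voronin-type criterion from Section~\ref{sec:univ} applies to $\zeta_{\chi} = \zeta\cdot H$: Theorem~A gives universality of $\zeta(s)$, and the Steinhaus-random phases on the full-density set of free primes supply the equidistribution needed by the criterion, since prescribing the phases on the blocks $B_{j}$ is a zero-measure event. For (iii), let $K$ be a small closed disc around $\nu$ with $K \subset \Omega$ and $K\cap(\mathcal{Z}\cup Z(\zeta(s)))=\{\nu\}$; by (c) and the Bohr--Landau theorem, the full zero-and-pole set of $\zeta_{\chi}$ in $\Real s > 1/2$ has density zero, so for a set of $\tau$ of density one the disc $K+i\tau$ is free of such points, and a Rouch\'{e} argument then produces a positive lower bound on $\max_{s\in K}|\zeta_{\chi}(s+i\tau)-\zeta_{\chi}(s)|$ that, combined with the polynomial growth bounds inherent in the universality criterion, contradicts the strong-recurrence condition \eqref{eq:lowup}.
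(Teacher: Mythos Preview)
Your proposal contains a basic inconsistency that breaks the construction. You want $\zeta_\chi(s)=\zeta(s)\,H(s)$, so that the factor $\zeta(s)$ contributes its own pole and zeros to $Z(\zeta_\chi)$; but you then assign \emph{random} Steinhaus phases to the primes outside the blocks $B_j$, a set of full density. These two choices are incompatible. With random phases on a full-density set of primes, $\zeta_\chi(s)$ is almost surely analytic and zero-free in $\mathbb{C}_{1/2}$ (this is precisely Helson's observation, which you invoke), so in particular it has no pole at $s=1$ and the $\zeta(s)$ factor does not survive. Equivalently, the free-prime contribution to your $\log H(s)$ splits as $\sum_{p\ \text{free}}\chi(p)p^{-s}-\sum_{p\ \text{free}}p^{-s}+O(1)$; the first sum converges almost surely in $\mathbb{C}_{1/2}$, while the second has a logarithmic singularity at $s=1$, so $H$ acquires an unwanted zero at $s=1$ and $Z(\zeta_\chi)\cap\mathbb{C}_{1/2}\neq Z(\zeta)+\mathcal{Z}$. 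Your sparse block contributions cannot repair this: they are designed to produce $\mathcal{Z}$, not to reinstate a pole at $s=1$.

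The paper's resolution is the opposite of yours: one sets $\chi(p)=1$ for \emph{every} prime outside a carefully chosen sparse subsequence $\mathcal{P}_\nu$, so that those Euler factors cancel exactly against the corresponding factors of $\zeta(s)$ and the identity
\[
\zeta_\chi(s)=\frac{\zeta(s)}{\zeta_{\mathcal{P}_\nu}(s)}\prod_{p\in\mathcal{P}_\nu}\frac{1}{1-\chi(p)p^{-s}}
\]
is genuine. The zero at $\nu$ is not an ``anchor'' in your heuristic sense; it is produced by a separate first step (Section~\ref{sec:constr}) which constructs $\mathcal{P}_\nu$ so that the sub-Euler product $\zeta_{\mathcal{P}_\nu}(s)=\prod_{p\in\mathcal{P}_\nu}(1-p^{-s})^{-1}$ extends meromorphically with a single pole of order $m_{\mathcal{Z}}(\nu)$ at $s=\nu$. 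The remaining points of $\mathcal{Z}$, all lying in $1/2<\Real s<\alpha$, are then realized by the Theorem~\ref{thm:main}-type construction applied to a further subset $\mathcal{P}\subset\mathcal{P}_\nu$; the hypothesis $\alpha<\nu$ is exactly what guarantees that $\mathcal{P}_\nu$ contains enough primes per interval $[x_k,x_{k+1})$ for this second step. The randomness needed for the mean-square condition \eqref{eq:logint} of Theorem~\ref{thm:vor2} is applied only on $\mathcal{P}_\nu\setminus\mathcal{P}$, which is sparse enough ($\sum_{p\in\mathcal{P}_\nu}p^{-2\sigma}<\infty$ for $\sigma>\nu/2<1/2$) that Lemma~\ref{lem:almost} gives an almost surely zero- and pole-free factor in $\mathbb{C}_{1/2}$.
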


We will see during the course of the proof that this result could be elaborated to allow meromorphic continuation as well as zeros and poles beyond the critical line. We have chosen the current version to have a statement that is suitably ``aligned'' with Theorem~\ref{thm:main} and has essentially the same proof. 

We could of course have stated a conditional version of Theorem~\ref{thm:main2}, assuming either the Riemann hypothesis or Cram\'{e}r's conjecture, but this would essentially just mean that $\nu$ could be placed closer to the $1$-line. The zero at $\nu$ prevents us from placing other zeros closer to the $1$-line, so that we are unable to obtain an analogue of Theorem~\ref{thm:main3}.  

A reasonable conclusion to be drawn from the three theorems stated above is that 
Theorem~B, while a striking reformulation of the generalized Riemann hypothesis, may be an unlikely first step in establishing the truth of it if no other characteristic feature of the Dirichlet $L$-functions than Voronin universality is taken into account. 

Our fourth theorem shows that a sparseness condition of Bohr--Landau-type is a rather drastic restriction.  

\vbox{\begin{theorem}\label{thm:nonuniversal}
Let $\mathcal{Z}^+$ be a locally finite multiset  in $\Real s > 1/2 $. Suppose that at least one of the following two conditions hold:
\begin{itemize}
\item[(i)] $\mathcal{Z}^+$ is a subset of the strip $1/2 < \Real s  \le 39/40$; 
 \item[(ii)] $\mathcal{Z}^+$ is a subset of $1/2<\Real s < 1$ and the Riemann hypothesis is true.   
\end{itemize}
Then there exists a Helson zeta function $\zeta_{\chi}(s)$ with $\sigma(\chi)\le 1/2$ so that 
 $\mathcal{Z}^+$ is the set of zeros of $\zeta_{\chi}(s)$ in $\Real s >1/2$. 
\end{theorem}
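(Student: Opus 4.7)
The plan is to construct the completely multiplicative unimodular $\chi$ by a block-of-primes construction similar in spirit to those that underlie Theorems~\ref{thm:main} and~\ref{thm:main3}, but much simpler here: since we no longer insist on universality or on the Bohr--Landau density condition, more freedom is available both for placing zeros and in the choice of blocks. Enumerate $\mathcal{Z}^+$, with multiplicity, as $\rho_1,\rho_2,\ldots$, and assign to each $\rho_j=\beta_j+i\gamma_j$ a finite block $P_j$ of primes lying in some short interval $[x_j,x_j+\xi_j]$, with the intervals pairwise disjoint. Set $\chi(p)=1$ for $p\notin\bigcup_j P_j$, and on $p\in P_j$ choose the unimodular values $\chi(p)$ so that the local Dirichlet polynomial built from that block vanishes at $s=\rho_j$ to the prescribed multiplicity $m_{\mathcal{Z}^+}(\rho_j)$.

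The admissible zero region is governed by the best available prime-gap estimate. In case~(i), the unconditional Baker--Harman--Pintz bound $\pi(x+x^{21/40})-\pi(x)\gg x^{21/40}/\log x$ supplies enough primes in each block and accounts for the exponent $39/40=1/2+(1-21/40)$: a block situated around $x$ can force a zero at $\beta+i\gamma$ only while the summed contribution of $p^{-\beta}$ over $p\in P_j$ stays large enough to produce the required cancellation in the Euler product, and this breaks down once $\beta>39/40$. In case~(ii), the Riemann hypothesis supplies the sharper $G(x)=O(x^{1/2}\log x)$, enlarging the admissible region to the entire open strip $1/2<\Real s<1$.

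The main obstacle is proving simultaneously that $\sigma(\chi)\le 1/2$ and that the meromorphic continuation of $\zeta_\chi$ to $\Real s>1/2$ has its zero set equal to $\mathcal{Z}^+$ with the prescribed multiplicities, with no spurious zeros. The meromorphic continuation follows from the Helson-type principle exploited throughout the paper, applied to a character that differs from the all-ones character only on the sparse set $\bigcup_j P_j$. Absence of spurious zeros is secured by taking $\xi_j$ small enough relative to $x_j$ that each block contributes a factor uniformly close to $1$ outside a tiny neighbourhood of its designated $\rho_j$, so that distinct blocks cannot interfere destructively and the Euler product cannot vanish except at the prescribed points. The quantitative heart of the argument lies in the calibration of the block widths $\xi_j$, their locations $x_j$, and the phases $\chi(p)$ on $P_j$, balancing the local vanishing requirement against the global smallness needed for both $\sigma(\chi)\le 1/2$ and the exclusion of extra zeros.
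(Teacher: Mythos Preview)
Your proposal has a fundamental conceptual gap. A finite block of Euler factors $\prod_{p\in P_j}(1-\chi(p)p^{-s})^{-1}$ never vanishes in $\Real s>0$, since each factor is nonzero there; likewise, forcing a Dirichlet polynomial $\sum_{p\in P_j}\chi(p)p^{-s}$ to vanish at $\rho_j$ has no direct bearing on the value $\zeta_\chi(\rho_j)$. Zeros of $\zeta_\chi$ in $1/2<\Real s<1$ are not local features of the Euler product: they only appear after meromorphic continuation across $\Real s=1$, as poles of $\zeta_\chi'/\zeta_\chi$. Planting such a pole requires controlling the \emph{global} partial sums $\sum_{p\le x}\chi(p)\log p$ so that, via Abel summation, the logarithmic derivative acquires a term behaving like $m(\rho)/(s-\rho)$. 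Your ``one block per zero, local vanishing'' recipe does not produce this. There is a second problem: setting $\chi(p)=1$ for $p\notin\bigcup_j P_j$ gives $\zeta_\chi(s)=\zeta(s)\cdot\prod_jE_j(s)$; in case~(i) you do not assume the Riemann hypothesis, so any off-line zero of $\zeta(s)$ becomes an unwanted zero of $\zeta_\chi$.

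The paper's construction is quite different. One writes down an explicit meromorphic $R(s)$ in $\mathbb{C}_{1/2}$ with simple poles of residue $m_{\mathcal{Z}^+}(\rho)$ at each $\rho$, then chooses a subsequence $\mathcal{P}$ of primes and phases $\chi(p)$ inductively along short intervals $[x_k,x_{k+1})$ so that $\sum_{p\in\mathcal{P},\,p\le x}\chi(p)\log p=\int_1^x q(y)\,dy+O(x^{1/2+\varepsilon})$ for a specific density $q$; Abel summation then makes $R(s)+\sum_{p\in\mathcal{P}}\chi(p)(\log p)p^{-s}$ analytic in $\mathbb{C}_{1/2}$, and Lemma~\ref{lem:exist} yields the zero structure. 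Two devices are essential and absent from your sketch: a dyadic decomposition of the strip into regions $U_j$, $V_j$, and the pairing of each prescribed zero $\rho\in V_j$ with a nearby auxiliary \emph{pole} $\rho'$, so that the contributions $x^{\rho-1}-x^{\rho'-1}$ to $q(x)$ stay summable with no density hypothesis on $\mathcal{Z}^+$. The bound $39/40$ arises because $\beta\le 39/40$ forces $|q(x)|\ll x^{-1/40}(\log x)^2$, whence $\int_{x_k}^{x_k+x_k^{21/40}}q\ll x_k^{1/2}(\log x_k)^2$, matching the $\gg x_k^{21/40}/\log x_k$ primes available from Baker--Harman--Pintz. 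Finally, primes outside $\mathcal{P}$ receive random Steinhaus values via Lemma~\ref{lem:almost}, which almost surely contributes no zeros or poles in $\Real s>1/2$.
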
}

Hence, in particular, there exist Helson zeta functions $\zeta_{\chi}(s)$ with $\sigma(\chi)\le 1/2$ that fail spectacularly to be universal for $H^*(\Omega)$.   
Here we have chosen to confine ourselves to the construction of $\zeta_{\chi}(s)$ with prescribed zeros, because this can be done with essentially the same method as that used to prove Theorem~\ref{thm:main}. The more general problem of constructing $\zeta_{\chi}(s)$ with prescribed zeros and poles, on the other hand, would require a further elaboration of our method which we have chosen not to pursue in this paper.

For every character $\chi$, there exists a sequence of vertical translates $\tau_n$ such that
\[ \zeta_{\chi}(s)=\lim_{n\to \infty} \zeta(s+i\tau_n), \]
with uniform convergence on compact subsets of the half-plane $\Real s>1$. Keeping this in mind, we may think of Theorem~\ref{thm:nonuniversal} as expressing another kind of universality of $\zeta(s)$: On the Riemann hypothesis, any conceivable set of zeros in the strip $1/2 < \Real s < 1$ for a function meromorphic in $\Real s>1/2$ can be reached via local uniform convergence of vertical translates of $\zeta(s)$ in $\Real s >1$, along with meromorphic continuation.

From another point of view, the appearance of the Riemann hypothesis in (ii) may perhaps seem a little deceptive, because Theorem~\ref{thm:nonuniversal} has essentially no relation to arithmetic. In fact, an analogous statement about ``universality'' of zeros in $\Omega$ could be made for Euler products with the functions $p^{-s}$ replaced by $\lambda_n^{-s}$ for any reasonably regular sequence $\lambda_n$ satisfying $\lambda_n\sim n\log n$. Curiously, the Riemann hypothesis implies exactly the regularity we need, expressed in terms of the admissible range $h\ge c \sqrt{x} \log x$ in \eqref{eq:primegap}, and this is why we have chosen the formulation of condition (ii) above.  
 
On the assumption that $G(x)=O\big((\log x)^{2+\varepsilon}\big)$ for every $\varepsilon>0$, we could prove an analogue of Theorem~\ref{thm:nonuniversal} for  $\mathcal Z^+$ being a subset of the entire critical strip $0<\Real s <1$. In this case, a minor extra precaution would have to be taken close to the $1$-line because of the extra logarithmic factor in \eqref{eq:primegapc} compared to \eqref{eq:primegap}. Our proof of Theorem~\ref{thm:nonuniversal} should make it clear how to proceed, and we will therefore refrain from entering the details of such a conditional construction.

\subsection{Outline of the paper} 
We begin in the next section by clarifying the following simple point: When $\zeta_{\chi}(s)$ has a meromorphic continuation across the $1$-line, the intersection of $Z(\zeta_{\chi}(s))$ with that line can consist of at most one point, and this point can only be a simple pole or a simple zero. This result is of some basic importance and will have several applications in subsequent sections.

In Section~\ref{sec:univ}, we turn to our condition for universality and our extensions of Theorem A and Theorem B. Our approach differs from previous work in this area (see for example \cite{St}) in that we focus on purely multiplicative conditions for universality of zeta functions. Indeed, our condition for universality rests on two pillars, one arithmetic and one analytic: Kronecker's approximation theorem and approximation of analytic functions by finite Euler products. For this reason, we work exclusively with $\log \zeta_{\chi}(s)$ rather than with $\zeta_{\chi}(s)$ itself. As in earlier work, bounded mean squares play a crucial role in carrying out the actual approximation of analytic functions, but now the mean squares are computed for $\log \zeta_{\chi}(s)$, or, to be more precise, we rely on the mean square distance from $\log \zeta_{\chi}(s)$ to the logarithm of finitely many factors of the Euler product of $\zeta_\chi(s)$. Convergence of this distance requires much less from $\zeta_{\chi}(s)$ than the boundedness of the mean squares of $\zeta_{\chi}(s)$. We need however to add the Bohr--Landau condition, which is not automatically implied by the mean square convergence of the logarithms of the finite Euler products.

The primary goal of Section~\ref{sec:finite} is to show that our ``multiplicative'' condition for universality, expressed in terms of $\log \zeta_{\chi}(s)$, implies the traditional ``additive'' condition, expressed in terms of $\zeta_{\chi}(s)$. From a function theoretic point of view, the distinction between the two conditions can be related to the classical notions of respectively functions of bounded type and functions of finite order, and our arguments rely on the canonical factorization of functions in either of these classes. Up to an inessential factor, a function of finite order is a bounded analytic function, while a function of bounded type is the ratio of two bounded analytic functions. We introduce and discuss these notions in the framework of Helson zeta functions and show in particular that the ``explicit formula'' for $\zeta'_{\chi}(s)/\zeta_{\chi}(s)$ becomes much more precise when $\zeta_{\chi}(s)$ is assumed to be of finite order rather than of bounded type. Nevertheless, digressing briefly from our main discussion, we are able to supply arguments to show that if $\zeta_{\chi}(s)$ extends to an analytic function of bounded type in a half-plane including the $1$-line and has a zero or a pole on that line, then $\zeta_{\chi}(s)$ has a zero-free region of the classical de la Vall\'{e}e--Poussin type whenever a natural density condition for the zeros is met.

In Section~\ref{sec:prelim}, we have collected some auxiliary results to be used in the proof of our main theorems. Here we express in precise terms the intuitive idea that we should make sense of 
\begin{equation} \label{eq:intuitive} \frac{\zeta'_{\chi}(s)}{\zeta_{\chi}(s)} - \sum_{\rho} \frac{m_{\mathcal{Z}}(\rho)}{(s-\rho)} \end{equation}
as an analytic function in $\Real s > 1/2$, when constructing $\zeta_{\chi}(s)$ with $Z\left(\zeta_{\chi}(s)\right)=\mathcal{Z}$. We need to modify \eqref{eq:intuitive} to get a manageable problem. First, the problem becomes easier if we replace $\zeta'_{\chi}(s)/\zeta_{\chi}(s)$ by a Dirichlet series over a carefully chosen subsequence of the primes and associated values for the character $\chi$. Then the remaining part of $\zeta'_{\chi}(s)/\zeta_{\chi}(s)$ can be found using our random model. Second, the sum over $\rho$ in \eqref{eq:intuitive} need not converge and even if it does, it may be hard to relate the sum to a Dirichlet series over prime powers. The solution to the latter problem will be to multiply each term in \eqref{eq:intuitive} by a suitable exponential factor, allowing us to write down manageable Mellin transforms. By our density condition on $\mathcal{Z}$, this can be done such that we also have absolute convergence of the sum in $\Real s >1/2$. The proofs in the two subsequent sections exhibit the details of such a construction. Section~\ref{sec:prelim} also contains some general estimates required to check  the mean square condition of our universality theorem (Theorem~\ref{thm:vor2}). 

The next three sections give the proofs of our main theorems. We begin in Section~\ref{sec:constr} with the the first step of the proof Theorem~\ref{thm:main}, which consists in picking a sub-product of the Euler product of $\zeta(s)$, extending to a meromorphic function with just one pole of the required multiplicity at $s=\nu$ and no other zeros or poles. When doing this ``surgery'' on the Euler product of $\zeta(s)$, we are faced with many of the same challenges that will appear in the main part of the proof. The situation is however simpler because the sum in \eqref{eq:intuitive} ``degenerates'' into a single term. 

The proofs of Theorem~\ref{thm:main}, Theorem~\ref{thm:main3}, and Theorem~\ref{thm:main2} are presented jointly in Section~\ref{thm:main}. The additional challenge in this section is to pick suitable exponential factors in the sum in \eqref{eq:intuitive}, as alluded to above. For the proof of Theorem~\ref{thm:main2}, it is essential that we use primes from the ``cutout'' Euler product from Section~\ref{sec:constr} to construct the corresponding Euler product.

In the final Section~\ref{sec:proofany}, we prove Theorem~\ref{thm:nonuniversal}. We rely on essentially the same construction as before, but resort in this case also to a special dyadic decomposition of the strip $1/2<\Real s < 1$  and a corresponding grouping of the points $\rho$ of the multiset $\mathcal{Z}^+$. In addition, we ``assign'' a pole to each of the prescribed zeros, in order to control the convergence of the appropriate counterpart to the sum in \eqref{eq:intuitive}. We note in passing that this ``pairing'' of zeros and poles would obviously be inadmissible if our task were to construct $\zeta_{\chi}(s)$ with a given signed multiset of zeros and poles. 


\section{Zeros and poles on the $1$-line}\label{Sec:PNT}

The line $\sigma=1$ plays a special role in our subject for the simple reason that it is the abscissa of absolute convergence for the Dirichlet series of $\zeta_{\chi}(s)$. As far as universality is concerned, a deep and dramatic conclusion about this line may be drawn from Theorem~A in conjunction with what was observed in \cite{HLS} about vertical limits in $\Real s >1$:
Pick any $f(s)$ in $H^*(\Omega)$; then there exists a sequence of vertical translates $\zeta(s+i \tau_n)$, with $\tau_n$ in $\mathbb{R}$,  such that
\begin{itemize}
\item $\zeta(s+i\tau_n)\to f(s)$ uniformly on every compact subset of $1/2<\Real s < 1$,
\item $\zeta(s+i\tau_n)\to \zeta_{\chi}(s)$ uniformly on every compact subset of $\Real s>1$ for some $\chi$ on $\mathbb{T}^\infty$.
\end{itemize} 
Hence the vertical line $\Real s=1$ is a ``brick wall'' between uniform convergence on compact subsets of respectively the strip $1/2<\Real s<1$ and the half-plane $\Real s>1$. In this assertion, we could of course replace $\zeta(s)$ by any Helson zeta function that is universal for $H^*(\Omega)$.

With this situation in mind, we now establish a ``prime number theorem'' for our zeta functions $\zeta_{\chi}(s)$, displaying a different peculiarity  of the    $1$-line.
\begin{theorem}\label{thm:PNT}
Suppose that $\zeta_{\chi}(s)$ is meromorphic on the line $\Real s=1$. Then only the following three situations may occur:
\begin{itemize}
\item[(i)] $\zeta_{\chi}(s)$ has neither a pole nor a zero on $\sigma=1$.
\item[(ii)] $\zeta_{\chi}(s)$ has a simple pole and neither a zero nor any other pole on $\sigma=1$.
\item[(iii)] $\zeta_{\chi}(s)$ has a simple zero and neither a pole nor any other zero on $\sigma=1$.
\end{itemize}
\end{theorem}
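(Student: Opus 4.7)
The plan is to work throughout in the half-plane $\Real s > 1$, where both the Euler product and $\log\zeta_\chi(s)=\sum_{p,k}\chi(p)^k/(kp^{ks})$ converge absolutely, and to convert elementary pointwise trigonometric inequalities into inequalities among $\log\zeta(\sigma)$ and $\log|\zeta_\chi(\sigma+it)|$. The trichotomy will then be forced by letting $\sigma\to 1^+$ and comparing with the assumed meromorphic continuation of $\zeta_\chi$ across the $1$-line.

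First I would establish the two-sided estimate $1/\zeta(\sigma)\le|\zeta_\chi(\sigma+it)|\le\zeta(\sigma)$ for $\sigma>1$: the upper bound is the triangle inequality applied to the Dirichlet series, and the lower bound follows by inserting the pointwise inequality $\cos\psi\ge -1$ into $\Real\log\zeta_\chi(\sigma+it)=\sum_{p,k}(kp^{k\sigma})^{-1}\cos\psi_{k,p}$. Since $\zeta(\sigma)\sim(\sigma-1)^{-1}$, these estimates pin $|\zeta_\chi(\sigma+it_0)|$ between constant multiples of $\sigma-1$ and $(\sigma-1)^{-1}$, forcing any zero or pole of $\zeta_\chi$ on the $1$-line to be simple.

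It remains to exclude the coexistence of zeros or poles at two distinct points $1+it_0,1+it_1$ with $t_0\ne t_1$. The key identity is that for any signs $\varepsilon_0,\varepsilon_1\in\{\pm 1\}$ and any real $\alpha,\beta$,
\[ \bigl|1+\varepsilon_0 e^{i\alpha}+\varepsilon_1 e^{i\beta}\bigr|^2 = 3+2\varepsilon_0\cos\alpha+2\varepsilon_1\cos\beta+2\varepsilon_0\varepsilon_1\cos(\alpha-\beta)\ge 0. \]
Setting $\alpha=\arg\chi(p)^k-kt_0\log p$ and $\beta=\arg\chi(p)^k-kt_1\log p$ makes the cross argument $\alpha-\beta=k(t_1-t_0)\log p$ independent of $\chi$. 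Summing against the positive weight $(kp^{k\sigma})^{-1}$ yields
\[ 3\log\zeta(\sigma)+2\varepsilon_0\log|\zeta_\chi(\sigma+it_0)|+2\varepsilon_1\log|\zeta_\chi(\sigma+it_1)|+2\varepsilon_0\varepsilon_1\log|\zeta(\sigma+i(t_0-t_1))|\ge 0, \]
and the last term is $O(1)$ as $\sigma\to 1^+$ because $\zeta$ is regular at $1+i(t_0-t_1)$. If $\zeta_\chi$ has a zero or pole of order $a_j\in\mathbb Z\setminus\{0\}$ at $1+it_j$, choosing $\varepsilon_j=\operatorname{sgn}(a_j)$ makes each of the two middle terms equal to $2|a_j|\log(\sigma-1)+O(1)$, so the left-hand side reduces to $(-3+2|a_0|+2|a_1|)\log(\sigma-1)+O(1)$. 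Since $|a_j|\ge 1$, the coefficient of $\log(\sigma-1)$ is strictly positive while $\log(\sigma-1)\to-\infty$, contradicting $\ge 0$.

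The hard part is that the hypothesis gives control only over $\zeta_\chi$ itself, not over $\zeta_{\chi^2}$ at any shifted point, so the classical Mertens combination $3+4\cos\psi+\cos 2\psi\ge 0$ does not directly serve for separating two locations. The identity $|1+\varepsilon_0 e^{i\alpha}+\varepsilon_1 e^{i\beta}|^2\ge 0$ is tailored precisely to sidestep this obstacle: its only cross term is $\cos(\alpha-\beta)$ rather than $\cos(\alpha+\beta)$, which kills the $\chi$-dependence and brings in only the harmless factor $\zeta(\sigma+i(t_0-t_1))$.
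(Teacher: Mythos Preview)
Your argument is correct and genuinely different from the paper's. Both proofs obtain simplicity of any zero or pole on $\sigma=1$ the same way, from the two-sided bound $(\sigma-1)\ll|\zeta_\chi(\sigma+it)|\ll(\sigma-1)^{-1}$. For the exclusion of a second zero or pole, however, the paper proceeds by comparison: assuming a simple pole at $1+it_0$, it shows that $\sum_p(1-\chi(p)p^{-it_0})p^{-\sigma}=O(1)$ uniformly for $\sigma>1$, which forces $\sum_p\theta_p^2 p^{-1}<\infty$ where $\chi(p)p^{-it_0}=e^{i\theta_p}$; Cauchy--Schwarz then yields $\log\zeta_\chi(s)=\log\zeta(s-it_0)+O\big((\log\frac{1}{\sigma-1})^{1/2}\big)$, so that $\zeta_\chi$ inherits the zero/pole pattern of the shifted Riemann zeta on the $1$-line. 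Your route is instead a direct Mertens-type positivity argument, with the neat twist that the identity $|1+\varepsilon_0 e^{i\alpha}+\varepsilon_1 e^{i\beta}|^2\ge 0$ produces only the cross term $\cos(\alpha-\beta)$, eliminating $\chi$ and bringing in merely the harmless $\log|\zeta(\sigma+i(t_0-t_1))|$ rather than any value of $\zeta_{\chi^2}$. This is shorter and more self-contained for the theorem at hand. The trade-off is that the paper's approach extracts the quantitative information $\sum_p\theta_p^2 p^{-1}<\infty$, which it later reuses (see the proof of Corollary~\ref{cor:zero-free} on zero-free regions), whereas your inequality, while elegant, does not yield that by-product.
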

\begin{proof}
In $\sigma>1$, we may represent $\zeta_{\chi}(s)$ by its Euler product. It follows that we have
\[ \log \zeta_{\chi}(s)=\sum_{p} \chi(p) p^{-s} +O(1) \]
uniformly in $\sigma>1$. Since 
\[ \sum_{p} p^{-\sigma} = \log \frac{1}{\sigma-1} + O(1), \]
it is clear that a pole or a zero on $\sigma=1$ must be simple. 

Now suppose we have a simple pole at $s=1+it_0$. Then $\log \zeta(s)-\log \zeta_{\chi}(s+i t_0)$ is analytic at  $s=1$. Representing this function by its Dirichlet series, we see that
\begin{equation} \label{eq:sump} \sum_{p} (1-\chi(p)p^{-it_0}) p^{-\sigma}=O(1) \end{equation}
uniformly for $\sigma>1$ . Writing $\chi(p)p^{-it_0}=:e^{i\theta_p}$ with $-\pi < \theta_p\le \pi$, we see that
\[ \Real (1-\chi(p)p^{-it_0}) p^{-\sigma} =(1-\cos \theta_p) p^{-\sigma} \asymp \theta_p^2 p^{-\sigma}, \]
so that \eqref{eq:sump} implies
\begin{equation} \label{eq:psum}  \sum_p \theta_p^2 p^{-1}<\infty. \end{equation}

We may now write
\[ \log \zeta_{\chi}(s)=\log \zeta (s-it_0)+i \sum_{p} \sin \theta_p p^{-s+it_0} +O(1), \]
which holds uniformly for $\sigma>1$.  By the Cauchy--Schwarz inequality and \eqref{eq:psum}, 
\begin{equation} \label{eq:pp} \sum_{p} \left|\sin \theta_p p^{-s+it_0}\right|\ll \left(\sum_{p} p^{1-2\sigma}\right)^{1/2} 
\sim \left(\log \frac{1}{(\sigma-1)}\right)^{1/2}. \end{equation}
Since $\zeta(s-it_0)$ has only one simple pole and no zeros on $\sigma=1$, the bound in \eqref{eq:pp} implies
that $\zeta_{\chi}(s)$ has neither an additional pole nor a zero on the line $\sigma=1$. 

An obvious variation of this argument applies when $\zeta_{\chi}(s)$ has a simple zero instead of a simple pole at the point $1+it_0$. 
\end{proof}

Theorem~\ref{thm:PNT} will be used several times in what follows, and it will in particular allow us to establish a general assertion about zero-free regions in Subsection~\ref{sec:zerofree}.

\section{A condition for universality} 
\label{sec:univ}
In this section, we identify the key ingredients required to establish Voronin universality and also the equivalence between the Riemann hypothesis and strong recurrence (see Theorem~B).  We recall the central points of our approach, mentioned in the introduction: We focus on purely multiplicative conditions for Voronin universality, and of central importance are bounded mean squares of $\log \zeta_{\chi}(s)$ and what we will call the Bohr--Landau condition for the density of the zeros and poles of $\zeta_{\chi}(s)$ in $\mathbb{C}_{1/2}$. 

Before presenting our general theorem on universality, we note that the proof of Theorem~A may be applied without any change to establish a condition in terms of mean squares of the function $\zeta_{\chi}(s)$ itself. Here we introduce the notation
\[ \mathbb{C}_{\alpha}:=\big\{ s=\sigma+it: \ \sigma>\alpha \big\} \] 
and the terminology that $\zeta_{\chi}(s)$ is of finite order in $\overline{\mathbb{C}_{\alpha}}$ for $\alpha<1$ if $\zeta_{\chi}(s)$ has $\sigma(\chi)\le \alpha$, is analytic in $\alpha \le \Real s < 1$, and satisfies  $|\zeta_{\chi}(\sigma+it)|=O\big(|t|^{A}\big)$ for some $A\ge 0$, uniformly in $\sigma\ge \alpha$. 
Functions of finite order constitute a classical subject in the theory of Dirichlet series (see for example \cite[p. 298]{T1}), where one usually requires the function to be analytic in $\overline{\mathbb{C}_{\alpha}}$. In view of Theorem~\ref{thm:PNT}, we have found it convenient to allow our functions to have a simple pole on the $1$-line, so that 
$\zeta(s)$ itself can be viewed as a function of finite order in any half-plane $\overline{\mathbb{C}_{\alpha}}$ for $\alpha<1$.
 \begin{theorem}\label{thm:vor1}
Suppose that $\sigma(\chi)\le 1/2$ and that $\zeta_{\chi}(s)$ is of finite order in  $\overline{\mathbb{C}_{\alpha}}$ and satisfies
\begin{equation} \label{eq:l22} \sup_{T\ge 1} \frac{1}{2T} \int_{-T}^T \left|\zeta_\chi(\alpha+it)\right|^2 dt <\infty \end{equation}
whenever $1/2<\alpha<1$.
Then $\zeta_{\chi}(s)$ is universal for $H^*(\Omega)$.
\end{theorem}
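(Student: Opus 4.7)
The plan is to verify that Bagchi's proof of Theorem~A in \cite{Ba} goes through under the present hypotheses, with only cosmetic modifications. Bagchi's scheme for a Dirichlet $L$-function uses three ingredients: the Euler product representation in $\Real s>1$; a Bohr--Landau-type estimate for the density of zeros in each sub-strip of $\Omega$; and a mean-square bound on vertical lines inside $\Omega$. The hypothesis $\sigma(\chi)\le 1/2$ provides the first, \eqref{eq:l22} is the third, and the finite-order hypothesis together with \eqref{eq:l22} will yield the second via Jensen's formula.

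First I would derive $N_{Z^+(\zeta_{\chi})}(\sigma,T)=o(T)$ for every $1/2<\sigma<1$. Fix $\alpha\in(1/2,\sigma)$ and apply Jensen's formula on a long rectangle with left edge on $\Real s=\alpha$ and right edge in the half-plane of absolute convergence. The polynomial bound from finite order controls the contributions from the horizontal sides; the hypothesis \eqref{eq:l22}, after an application of Cauchy--Schwarz to pass from $|\zeta_{\chi}|^{2}$ to $\log|\zeta_{\chi}|$, controls the contribution from the left edge; and the Euler product controls the right. This is a standard Carlson-type argument and in fact yields the quantitative Bohr--Landau bound $N(\sigma,T)=O\bigl(T^{1-c(\sigma-1/2)}\bigr)$ for a positive constant $c=c(\sigma)$, in particular the qualitative statement $o(T)$ used below.

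Next I would run the Bagchi--Voronin scheme itself. Set $\zeta_{\chi}^{N}(s):=\prod_{p\le N}(1-\chi(p)p^{-s})^{-1}$. Using \eqref{eq:l22} and the Bohr--Landau estimate just derived, one proves the truncation lemma
\[ \lim_{N\to\infty}\limsup_{T\to\infty}\frac{1}{2T}\int_{-T}^{T}\max_{s\in K}\bigl|\zeta_{\chi}(s+it)-\zeta_{\chi}^{N}(s+it)\bigr|^{2}\,dt=0 \]
for every compact $K\subset\Omega$; the Bohr--Landau bound guarantees that the set of $t$ for which $K+it$ meets a zero or pole of $\zeta_\chi$ has vanishing density, so that the Mellin--Plancherel estimates used in Bagchi's approximation argument survive after excising this exceptional set. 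Independently, a standard Hilbert-space density argument shows that the set of finite Euler products $\prod_{p\le N}(1-\omega_{p}p^{-s})^{-1}$ with $|\omega_{p}|=1$ is dense in $H^{*}(\Omega)$. Combining these with Kronecker's equidistribution theorem, applied to the rationally independent frequencies $\{\log p\}_{p\le N}$ on the torus $\mathbb{T}^{\pi(N)}$, produces a set of $t$ of positive lower density on which $\max_{s\in K}|\zeta_{\chi}^{N}(s+it)-f(s)|<\varepsilon/2$ for any prescribed $f\in H^{*}(\Omega)$ and $\varepsilon>0$; intersecting with the complement of the (small) exceptional set coming from the truncation lemma completes the proof.

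The main obstacle is the first step: the hypothesis \eqref{eq:l22} alone is not strong enough to drive Jensen's formula through, and the finite-order hypothesis must be combined carefully with \eqref{eq:l22} to control the logarithmic integrals along the horizontal and left sides of the rectangle. Once the Bohr--Landau estimate is secured, the remainder of the argument is a routine adaptation of the proof of Theorem~A with the Dirichlet $L$-function replaced throughout by $\zeta_{\chi}(s)$, and no use is made of any arithmetic property of $\chi$ beyond complete multiplicativity and unimodularity.
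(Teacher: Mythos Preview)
Your overall strategy---run Bagchi's proof of Theorem~A directly on $\zeta_{\chi}(s)$---is sound, and the paper explicitly remarks that this can be done. However, the paper does not take this route: it first proves the more general Theorem~\ref{thm:vor2}, whose hypotheses are phrased in terms of $\log\zeta_{\chi}(s)$ rather than $\zeta_{\chi}(s)$, and then deduces Theorem~\ref{thm:vor1} by showing (Theorem~\ref{thm:BLthm}) that finite order together with \eqref{eq:l22} implies both the Bohr--Landau condition and the logarithmic mean-square condition \eqref{eq:logint}. The payoff of the paper's detour is that Theorem~\ref{thm:vor2} applies to Helson zeta functions with zeros \emph{and poles} in $\Omega$, which is exactly what the main constructions of the paper require; your direct approach would prove Theorem~\ref{thm:vor1} in isolation but would not yield this more flexible criterion.

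Your sketch of the Bohr--Landau step contains a genuine gap. Littlewood's lemma on a rectangle, combined with the bound
\[
\int_{-T}^{T}\log|\zeta_{\chi}(\alpha+it)|\,dt\le T\log\Bigl(\tfrac{1}{2T}\int_{-T}^{T}|\zeta_{\chi}(\alpha+it)|^{2}\,dt\Bigr)=O(T)
\]
coming from Jensen's inequality and \eqref{eq:l22}, yields only $\int_{\sigma}^{2}N(u,T)\,du=O(T)$, hence $N(\sigma,T)=O(T)$---not $o(T)$, and certainly not $O(T^{1-\varepsilon})$. The ``standard Carlson-type argument'' you invoke is not just Jensen on a rectangle: it requires a mollifier. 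One sets $M_X(s)=\sum_{n<X}\mu(n)\chi(n)n^{-s}$ and $f_X(s)=\zeta_{\chi}(s)M_X(s)-1$, applies Littlewood's lemma to $Q_X(s)=1-f_X(s)^{2}$, and exploits the fact that the mean squares of $M_X$ are uniformly bounded on lines $\sigma>1/2$. This is precisely how the paper obtains \eqref{eq:BLest} in the proof of Theorem~\ref{thm:BLthm}, following Ingham~\cite{In}. Without this device your density estimate fails, and since you correctly identify it as the input that makes the exceptional set in the truncation lemma have zero density, the second step of your scheme would not go through either.
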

We will not comment further on the direct proof of this result, because we will establish later that it is a consequence of the main theorem of this section. The idea for this new result is  essentially to  replace $\zeta_{\chi}(s)$  by $\log \zeta_{\ch}(s)$ in \eqref{eq:l22}. This will result in a much weaker growth condition on $\zeta_{\chi}(s)$, and it will allow us to treat zeros and poles on equal terms. We need however, as already mentioned in the introduction, to add a density condition on the zeros and poles that holds automatically on the assumptions of Theorem~\ref{thm:vor1}. To this end, we set
\[ N(\chi, \sigma, T):= N_{Z(\zeta_{\chi}(s))}(\sigma, T)=
\sum_{\rho=\beta+i\gamma\in Z\left(\zeta_{\chi}(s)\right): \beta>\sigma, |\gamma|\le T} |m_{Z(\zeta_{\chi}(s))}(\rho)|\]
for $\sigma\ge \sigma(\chi)$. In the special case when  $\sigma(\chi)\le 1/2$, we say that $\zeta_{\chi}(s)$ satisfies the Bohr--Landau condition if
\begin{equation} \label{eq:blcond} N(\chi, \sigma, T)=o(T) \end{equation}
for every $\sigma>1/2$.

We will use the natural convention for a Helson zeta function $\zeta_{\chi}(s)$ with $\sigma(\chi)\le \alpha$ that
$\log \zeta_{\chi}(s)$ is the function defined in the domain obtained from $\mathbb{C}_{\alpha}$ by removing all horizontal line segments between the line $\Real s =\alpha$ and the zeros and the poles, if any, of $\zeta_{\chi}(s)$, by analytic continuation from the half-plane $\mathbb{C}_{1}$ of the Dirichlet series
\[ \sum_{n=2}^{\infty} \frac{\Lambda(n)}{\log n} \chi(n) n^{-s}. \]
Here and in the sequel, $\Lambda(n)$ denotes the classical von Mangoldt function which takes the value $\log p$ if $n=p^k$ for some $k\ge 1$ and otherwise $\Lambda(n)=0$. We notice that $\log \zeta_{\chi}(s)$ fails to exist only on at most a discrete subset  of any vertical line in $\mathbb{C}_{\alpha}$, and hence we may compute mean squares along such lines. These mean squares will all be finite since $\log \zeta_{\chi}(s)$ have only logarithmic singularities.

Voronin universality deals primarily with approximation properties of finite Euler products, and hence we are particularly interested in the products 
\[ P_x \zeta_{\chi}(s):= \prod_{p\le x} \frac{1}{(1-\chi(p)p^{-s})}\]
for which
\[ \log P_x \zeta_{\chi}(s):=\sum_{n\ge 2: p|n \Rightarrow p\le x} \frac{\Lambda(n)}{\log n} \chi(n) n^{-s}= \sum_{p\le x} \sum_{j=1}^{\infty} j^{-1}\chi(p)^j p^{-js} .\]
This Dirichlet series converges absolutely for $\Real s>0$. 

Our condition for Voronin universality now reads as follows.

\begin{theorem}\label{thm:vor2}
Suppose that $\sigma(\chi)\le 1/2$ and that $\zeta_{\chi}(s)$ satisfies the Bohr--Landau condition. If, in addition, there exists a constant $C$, depending only on $\chi$, such that
\begin{equation} \label{eq:logint}  \limsup_{T\to \infty} \frac{1}{2T} \int_{-T}^{T} \left|\log \zeta_\chi(\sigma+it)-\log P_x \zeta_{\chi}(\sigma+it)\right|^2 dt \le
C \sum_{p>x} p^{-2\sigma} \end{equation}
for $x\ge 1$, uniformly for $\sigma\ge \sigma_0$ whenever $\sigma_0>1/2$, then $\zeta_{\chi}(s)$ is universal for $H^*(\Omega)$.
\end{theorem}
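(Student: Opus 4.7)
The plan is to adapt the Bagchi--Voronin strategy, but working throughout with logarithms rather than with the functions themselves. Fix $f \in H^*(\Omega)$, $\varepsilon > 0$, and $K \subset \Omega$ compact; set $g := \log f$, which is analytic on the simply connected strip $\Omega$ since $f$ is zero-free. Pick $\sigma_0$ with $1/2 < \sigma_0 < \min_{s \in K}\Real s$ and a neighbourhood $\widetilde K$ of $K$ contained in $\{\Real s > \sigma_0\} \cap \Omega$. It will suffice to produce a set of $\tau$ of positive lower density on which $\max_{s \in K} \big|\log \zeta_{\chi}(s + i\tau) - g(s)\big| < \varepsilon'$ for a suitably small $\varepsilon' = \varepsilon'(\varepsilon, f, K)$; exponentiating and using that $\exp$ is Lipschitz on bounded sets then delivers the theorem. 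To ensure $\log \zeta_\chi(s + i\tau)$ is actually holomorphic near $K$, I would invoke the Bohr--Landau condition: $N(\chi, \sigma_0, T) = o(T)$ forces the set of $\tau \in [0,T]$ for which $\widetilde K + i\tau$ meets a zero or pole of $\zeta_\chi$ to have measure $o(T)$. Off this density-zero exceptional set, $\log \zeta_\chi(s+i\tau)$ is holomorphic on a neighbourhood of $K$ under the branch convention fixed just before the theorem.

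The main body then has three steps. \emph{Step 1 (analytic approximation).} Fix a large truncation parameter $x$ to be chosen later, and let $D$ be an open disk with $\widetilde K \subset D \subset \Omega$. A Pecherskii--Steinitz rearrangement argument in the Bergman space $A^2(D)$, exploiting the conditional convergence of $\sum_p p^{-s}$ in $A^2(D)$ for disks inside $\{\Real s > 1/2\}$, produces unimodular numbers $(\omega_p)_{p \leq x}$ with
$$\Big\| g(s) - \sum_{p \leq x} \sum_{j=1}^{\infty} j^{-1} \omega_p^j p^{-js} \Big\|_{A^2(D)} < \varepsilon'/3.$$
\emph{Step 2 (Kronecker).} Since $\{\log p\}_p$ is $\mathbb{Q}$-linearly independent, Kronecker's theorem yields a set $E \subset \mathbb{R}$ of positive lower density on which $|\chi(p) p^{-i\tau} - \omega_p| < \delta$ simultaneously for all $p \leq x$. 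Taking $\delta = \delta(x, \varepsilon')$ small enough, for every $\tau \in E$ the polynomial $\log P_x \zeta_\chi(s + i\tau)$ lies within $\varepsilon'/3$ of the polynomial of Step 1 in $A^2(D)$.

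\emph{Step 3 (mean-square tail).} By hypothesis \eqref{eq:logint} and Fubini,
$$\limsup_{T\to\infty} \frac{1}{2T} \int_{-T}^{T} \big\| \log \zeta_\chi(\cdot + i\tau) - \log P_x \zeta_\chi(\cdot + i\tau) \big\|_{A^2(D)}^2 \, d\tau \ll_D C \sum_{p > x} p^{-2\sigma_0},$$
so Chebyshev bounds the measure of the set of $\tau \in [0,T]$ where the $A^2(D)$-norm exceeds $\varepsilon'/3$ by $9 C'(\varepsilon')^{-2} T \sum_{p > x} p^{-2\sigma_0}$, which has upper density strictly smaller than the positive lower density of $E$ once $x$ is chosen large enough. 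Intersecting $E$ with the complements of both the Bohr--Landau exceptional set and this Chebyshev set yields a set of positive lower density on which the triangle inequality gives $\|\log \zeta_\chi(s + i\tau) - g(s)\|_{A^2(D)} < \varepsilon'$. The mean-value property for holomorphic functions converts this $A^2$ bound into the desired sup bound on $K$, and exponentiation finishes the proof.

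The main obstacle is the synchronization of these three sources of $\tau$: only Step 2 directly yields positive lower density, while Steps 1 and 3 give only \emph{density-zero} or \emph{measure-$o(T)$} exceptional sets whose sizes depend on $x$ in opposite ways -- the Kronecker density must remain bounded below as $x$ grows, while the Chebyshev bound must be driven down by choosing $x$ large. A further subtle point is the passage from the mean-square estimate along vertical lines to an $A^2(D)$ estimate and then to a sup bound on $K$: this rests on the holomorphy of $\log \zeta_\chi - \log P_x \zeta_\chi$ on a tube over $\widetilde K$ for $\tau$ in the Bohr--Landau good set, which is precisely where all three hypotheses $\sigma(\chi) \leq 1/2$, the Bohr--Landau condition, and the mean-square bound \eqref{eq:logint} conspire.
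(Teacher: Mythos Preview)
Your overall architecture---working with logarithms, passing to a Bergman space $A^2$, combining a Pecherskii--Kronecker head with a Chebyshev tail, and using the Bohr--Landau condition to ensure the logarithm is holomorphic off a density-zero set---matches the paper's. The gap is precisely the point you flag as ``the main obstacle'' but do not actually resolve.

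Your claim in Step~3 that the Chebyshev exceptional set ``has upper density strictly smaller than the positive lower density of $E$ once $x$ is chosen large enough'' is false. By Weyl equidistribution, the Kronecker set $E$ has density of order $(c\delta)^{\pi(x)}$, which decays like $\exp(-c'x/\log x)$; the tail bound $C'(\varepsilon')^{-2}\sum_{p>x}p^{-2\sigma_0}$ decays only like $x^{1-2\sigma_0}$. So for large $x$ the Chebyshev bad set swallows $E$, and for small $x$ the tail is not small. There is no single $x$ that makes your intersection argument work.

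The paper avoids this circularity by quoting Lemma~3.1 (the Bagchi--Voronin approximation lemma), whose part~(2) is exactly the missing ingredient: it furnishes a \emph{fixed} $X$ and a set $A$ of \emph{fixed} positive density $d(A)$ such that for \emph{every} $x\ge X$ the subset $A_x\subset A$ on which $|P_x\zeta_\chi(s+i\tau)-P_X\zeta_\chi(s+i\tau)|<\varepsilon$ still has lower density at least $(1-\theta)d(A)$. The Kronecker step is thus frozen at level $X$, and its density no longer depends on $x$. The paper then runs your Step~3 to get $\underline d(B_x)\ge 1-\delta$ with $\delta=d(A)/3$ for all sufficiently large $x$, and takes $x\ge\max(X,Y)$; the intersection $A_x\cap B_x$ has lower density at least $d(A)/3$. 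The nontrivial content of Lemma~3.1(2)---that the tail $P_x/P_X$ is close to $1$ on a density-$(1-\theta)$ subset of $A$ uniformly in $x$---is what your single-level argument lacks, and its proof (in \cite{BM}, following Bagchi) is where the Pecherskii rearrangement and the conditional equidistribution of $(\chi(p)p^{-i\tau})_{p>X}$ are actually deployed.
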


Theorem~\ref{thm:BLthm} of the next section shows that the condition of Theorem~\ref{thm:vor1} implies that of Theorem~\ref{thm:vor2}. Hence, in view of Theorem~\ref{thm:BLthm}, the following  consequence of Theorem~\ref{thm:vor2} yields an extension of Theorem~B.  

\begin{coro}
Suppose that $\zeta_{\chi}(s)$ is a Helson zeta function satisfying the conditions of Theorem~\ref{thm:vor2}.
Then $\zeta_{\chi}(s)$ is a strongly recurrent point for vertical translations in
$M(\Omega)$ if and only if $\zeta_{\chi}(s)$ is in $H^*(\Omega)$.
\end{coro}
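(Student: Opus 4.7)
The plan is to establish the two implications separately, using Theorem~\ref{thm:vor2} for one direction and the Bohr--Landau condition together with Rouché's theorem for the other. Throughout, the key technical point to keep in mind is that on any compact set $K \subset \Omega$ on which $\zeta_{\chi}(s)$ is analytic (and hence bounded), spherical closeness of $\zeta_{\chi}(s+i\tau)$ to $\zeta_{\chi}(s)$ with parameter $\varepsilon$ small forces Euclidean closeness on $K$: indeed, the spherical estimate together with the bound $|\zeta_{\chi}(s)| \le M$ on $K$ first forces $|\zeta_{\chi}(s+i\tau)|$ to remain bounded on $K$ (so no poles appear), and then yields a uniform Euclidean bound $|\zeta_{\chi}(s+i\tau) - \zeta_{\chi}(s)| \ll \varepsilon$ on $K$.

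For the implication ``$\zeta_{\chi} \in H^*(\Omega) \Rightarrow$ strongly recurrent'', I would simply invoke Theorem~\ref{thm:vor2} with the target function $f = \zeta_{\chi}$ itself (which is legitimate because $\zeta_{\chi}$ and $1/\zeta_{\chi}$ are both in $H(\Omega)$). This yields a set of $\tau$ of positive \emph{lower} density on which $\max_{s\in K}|\zeta_{\chi}(s+i\tau) - \zeta_{\chi}(s)| < \varepsilon$, and since $\zeta_{\chi}$ is bounded on compact $K \subset \Omega$, the spherical distance is controlled by the Euclidean distance (divided by a constant depending on $K$). Positive lower density implies positive upper density, which gives strong recurrence.

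For the converse, I would argue by contrapositive: assume $\zeta_{\chi}(s)$ has either a zero or a pole in $\Omega$, and show that strong recurrence fails. Fix such a point $s_0 = \beta_0 + i\gamma_0$ with $\beta_0 > 1/2$, and pick $r > 0$ so small that $\beta_0 - r > 1/2$ and the closed disc $\overline{D} = \overline{D(s_0, r)}$ is contained in $\Omega$ and contains no zero or pole of $\zeta_{\chi}(s)$ other than $s_0$. Take $K = \overline{D}$, so that $\zeta_{\chi}$ is analytic and bounded on $K$ (working with $1/\zeta_{\chi}$ if $s_0$ is a pole). If $\tau$ satisfies the spherical closeness condition with $\varepsilon$ sufficiently small, then by the preliminary observation above, $\zeta_{\chi}(s+i\tau)$ has no pole in $K$ and is Euclidean close to $\zeta_{\chi}(s)$ uniformly on $K$; choosing $\varepsilon$ smaller than $\min_{s\in\partial D}|\zeta_{\chi}(s)|/2$ and applying Rouché's theorem on $\partial D$, we conclude that $\zeta_{\chi}(s+i\tau)$ has exactly as many zeros in $D$ as $\zeta_{\chi}(s)$, namely $|m_{Z(\zeta_{\chi})}(s_0)| \ge 1$. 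Equivalently, $\zeta_{\chi}$ has a zero (or pole) in $D + i\tau$.

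The final step is to derive a contradiction with the Bohr--Landau condition. Suppose strong recurrence held, so that the set $S$ of admissible $\tau$ has $\overline{d}(S) > 0$. By a greedy selection inside $S \cap [0,T]$, extract a subset $S_T$ of well-separated points (any two at distance greater than $2r$) of cardinality $\gg T$. Since the discs $D + i\tau$ for $\tau \in S_T$ are pairwise disjoint, the zeros or poles they contain are distinct; all lie in $\{s : \Re s \ge \beta_0 - r\} \cap \{s : |\Im s| \le T + r\}$. Setting $\sigma := \beta_0 - r > 1/2$, this produces $N(\chi, \sigma, T+r) \gg T$, contradicting the Bohr--Landau condition $N(\chi,\sigma,T) = o(T)$ that is part of the hypothesis of Theorem~\ref{thm:vor2}. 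The main subtlety throughout is the translation between spherical and Euclidean metrics near the special point $s_0$; once this is handled, the Rouché step and the density argument are routine.
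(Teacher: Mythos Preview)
Your argument is correct and follows essentially the same approach as the paper's proof: the ``if'' direction is immediate from Theorem~\ref{thm:vor2} applied with $f=\zeta_{\chi}$, and the ``only if'' direction uses Rouch\'{e}'s theorem on a small disc about a zero or pole to produce, via the Bohr--Landau condition, a contradiction of the form $N(\chi,\sigma,T)\gg T$ along a sequence $T\to\infty$. The paper disposes of the converse in one sentence by citing Bagchi's Theorem~4.7, whereas you spell out two points that are only implicit there---the passage between the spherical-type metric in \eqref{eq:lowup} and the Euclidean metric, and the reduction of the pole case to the zero case (which is clean here since the distance in \eqref{eq:lowup} is invariant under $z\mapsto 1/z$); both are handled correctly.
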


\begin{proof}
The ``if part'' is immediate from Theorem~\ref{thm:vor2}. To see that the ``only if part'' also holds, we use Rouch\'{e}'s theorem as in the proof of 
\cite[Thm. 4.7]{Ba} to show that if $\zeta_{\chi}(s)$ is a strongly recurrent point for vertical translations in $M(\Omega)$ and has a zero or a pole in $\Omega$, then there exists a $\sigma>1/2$ and a positive constant $c$ such that $N(\chi,\sigma,T)\ge c T$ for large enough $T$. This is in conflict with the Bohr--Landau condition \eqref{eq:blcond}, hence $\zeta_{\chi}(s)$ must belong to $H^*(\Omega)$.
\end{proof}

To prove Theorem~\ref{thm:vor2}, we will follow \cite[Ch. 11]{BM}. We begin by stating the crucial approximation property of finite Euler products. 
\begin{lemma}\label{lem:uni}
Let $f(s)$ be a function in $H^*(\Omega)$, and let $K$ be a compact subset of $\Omega$. Given $\varepsilon$, $\theta>0$ and any $\chi$ in $\T^{\infty}$, there exist a set $A$ of positive numbers with positive density and a positive number $X$ such that
\begin{enumerate}
\item $\sup_{s\in K} \left|f(s)-P_X \zeta_{\chi}(s+i\tau) \right|<\varepsilon$ for every $\tau$ in $A$;
\item for every $x\ge X$,
\[  \underline{d}\left(\left\{\tau\in A: \  \sup_{s\in K} \left| P_x \zeta_{\chi}(s+i\tau)-P_X \zeta_{\chi}(s+i\tau)\right|<\varepsilon \right\}\right) > (1-\theta) d(A). \]
\end{enumerate}  
\end{lemma}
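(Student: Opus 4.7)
The plan is to pass to logarithms and reduce to approximating $g := \log f$ on a compact $K_1 \supset K$ with $K_1 \subset \Omega$ by a log-partial-Euler product, then to realize this approximation as a vertical translate of $\log P_X \zeta_\chi$ via Kronecker's theorem. Write $\sigma_0 := \min_{s \in K_1} \Real s > 1/2$. Voronin's classical approximation theorem (or its Bagchi-style formulation via Pechersky rearrangement of the series $\sum_p \log(1-z_p p^{-s})^{-1}$ in a suitable Hilbert space of analytic functions on $K_1$) supplies an integer $X_0$ and unimodular $\{z_p\}_{p \le X_0}$ such that
\[ \sup_{s \in K_1} \Bigl| g(s) - \sum_{p \le X_0} \log(1 - z_p p^{-s})^{-1} \Bigr| < \varepsilon/4. \]
I will impose an additional lower bound on $X_0$ coming from the density estimate used to verify (2).

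Fix $X \ge X_0$ (the precise choice to be made at the end) and pick $\delta > 0$ small enough that perturbing each $z_p$ by less than $\delta$ alters the above finite log-Euler-product by less than $\varepsilon/4$ on $K_1$. Define
\begin{align*}
A_1 &= \{\tau > 0 : |\chi(p) p^{-i\tau} - z_p| < \delta \text{ for every prime } p \le X_0\}, \\
A_2 &= \{\tau > 0 : \sup_{s \in K_1}|\log P_X \zeta_\chi(s+i\tau) - \log P_{X_0} \zeta_\chi(s+i\tau)| < \varepsilon/4\},
\end{align*}
and set $A := A_1 \cap A_2$. By Kronecker's theorem applied to the $\mathbb{Q}$-linearly independent sequence $\{\log p\}_p$, we have $d(A_1) = d_1 > 0$. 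The indicators $\mathbf{1}_{A_1}$ and $\mathbf{1}_{A_2}$ are functions of disjoint blocks of coordinates of $\T^\infty$ (indexed by $p \le X_0$ and $X_0 < p \le X$, respectively), so unique ergodicity of the Kronecker flow yields $d(A) = d_1 \cdot d(A_2)$. The Chebyshev-plus-Parseval estimate of the next paragraph, applied to the sum over $(X_0, X]$ in place of $(X, x]$, gives $d(A_2) \ge 1/2$ provided $X_0$ is sufficiently large, and hence $d(A) > 0$. For $\tau \in A$, the triangle inequality gives $\sup_K |g - \log P_X \zeta_\chi(\cdot + i\tau)| < 3\varepsilon/4$; exponentiating and using that $|f|$ and $|P_X \zeta_\chi(\cdot + i\tau)|$ are uniformly bounded on $K$ yields (1).

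For (2), decompose
\[ \log P_x \zeta_\chi(s+i\tau) - \log P_X \zeta_\chi(s+i\tau) = L(\tau, s) + R(\tau, s), \]
where $L(\tau, s) = \sum_{X < p \le x} \chi(p) p^{-s-i\tau}$ collects the $j=1$ contributions and $|R| \le C \sum_{p > X} p^{-2\sigma_0}$ uniformly in $\tau$ and $x \ge X$; enlarging $X$ makes $R$ arbitrarily small. For $L$, a mean-value (Cauchy integral) estimate on a compact $K_2$ slightly larger than $K$ dominates $\sup_{s\in K} |L(\tau, s)|^2$ by $C' \int_{K_2}|L|^2 \, dm$. Parseval applied to the equidistributed Kronecker orbit on $\T^\infty$ evaluates
\[ \lim_{T \to \infty} \frac{1}{T} \int_0^T |L(\tau, s)|^2 \, d\tau = \sum_{X < p \le x} p^{-2\Real s} \le \sum_{p > X} p^{-2\sigma_0}. \]
Crucially, $\mathbf{1}_A$ depends only on the torus coordinates for $p \le X$ while $|L|^2$ depends only on those for $X < p \le x$, so the two are independent under Haar measure. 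Unique ergodicity then factorizes the joint time-average,
\[ \limsup_{T \to \infty} \frac{1}{T} \int_0^T \mathbf{1}_A(\tau) \sup_{s \in K}|L(\tau, s)|^2 \, d\tau \le d(A) \cdot C'' \sum_{p > X} p^{-2\sigma_0}, \]
and Chebyshev bounds the density of $\tau \in A$ with $\sup_K|L| > \varepsilon/3$, as a fraction of $d(A)$, by $C''' \varepsilon^{-2} \sum_{p > X} p^{-2\sigma_0}$. Choosing $X$ large enough forces this fraction below $\theta$, and converting back from logarithms gives (2).

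The main obstacle is the apparent mismatch between $d(A)$, which direct Kronecker-type bounds involving $\pi(X)$ constraints would make exponentially small in $X$, and the tail $\sum_{p > X} p^{-2\sigma_0}$, which decays only polynomially; a naive Chebyshev bound against $[0,T]$ alone would fall short of the $\theta d(A)$ threshold. The resolution is the torus-independence argument sketched above: because $A$ is built from constraints on primes $p \le X$ while the obstruction to (2) is controlled by primes $p > X$, the bad fraction of $A$ coincides in the limit with the unconditional bad fraction of $[0,T]$, which is independent of $d(A)$ and can be made smaller than $\theta$ by choosing $X$ sufficiently large.
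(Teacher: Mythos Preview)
Your argument is essentially correct and reconstructs the Bagchi/Bayart--Matheron proof that the paper cites verbatim (the paper gives no independent proof, referring instead to \cite[Thm.~11.2]{BM}). The two key ingredients---Pechersky-type approximation of $\log f$ by a finite log-Euler product followed by Kronecker's theorem, and the torus-independence factorization that separates constraints on $p\le X$ from the tail over $p>X$---are exactly the mechanism used there, so your identification of the ``main obstacle'' and its resolution is on point.

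A couple of minor points to tidy up: the passage from $\sup_K|\log P_x-\log P_X|$ small to $\sup_K|P_x-P_X|<\varepsilon$ requires the uniform bound on $|P_X\zeta_\chi(\cdot+i\tau)|$ for $\tau\in A$ that you obtained in part~(1), so the $\varepsilon$-bookkeeping has to be threaded through both parts consistently; and the unique-ergodicity factorization is literally valid for Riemann-integrable (Jordan-measurable) observables on the finite torus, so one should note that $A_1,A_2$ pull back to such sets. Neither point is a gap.
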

We would like to stress that this remarkable result, originating in Voronin's work \cite{V}, is valid for every $\chi$, without any assumption on the function $\zeta_{\chi}(s)$. The proof is word for word the same as that of \cite[Thm. 11.2]{BM}, which in turn relies on \cite{Ba}.

\begin{proof}[Proof of Theorem~\ref{thm:vor2}]
We are given a compact set $K$ in $\Omega$ and begin by picking a bounded domain $U$, $K\subset U$, 
whose closure is contained in $\Omega$. We set
\[ \| g\|^2_{A^2(U)}:=\int_U |g(s)|^2 dm_2(s), \]
where $g(s)$ is some measurable function defined on $U$ and $m_2$ is Lebesgue area measure on $\mathbb{C}$.  The Bergman space $A^2(U)$ consists of those analytic functions $g(s)$ on $U$ for which $\| g\|_{A^2(U)}<\infty$. It is a well known fact (see \cite[Lem. 4.8.6]{BG}) that there exists a constant $C(K,U)$ such that
\begin{equation} \label{eq:bergman}  \max_{s\in K} |g(s)| \le C(K,U) \| g\|_{A^2(U)} \end{equation}
for every $g(s)$ in $A^2(U)$. 

Now set
\[ D:=\left\{ \tau>0: \ \log \zeta_{\chi} (s+i\tau) \in A^2(U)  \right\}; \]
we notice that by the Bohr--Landau condition, $d(D)=1$. Using our assumption \eqref{eq:logint} and Fubini's theorem, we see that we can make 
\[ \limsup_{T\to\infty} \frac{1}{T} \int_{\tau\in D, \tau\le T} \int_{U}  \left|\log \zeta_{\chi}(s+i\tau)-\log P_x \zeta_{\ch}(s+i\tau)\right|^2 dm(s) d\tau \]
as small as we wish if we choose $x$ large enough. Hence, in view of \eqref{eq:bergman}, we have
\[ \limsup_{T\to\infty} \frac{1}{T} \int_{\tau\in D, \tau\le T} \max_{s\in K} \left|\log \zeta_{\chi}(s+i\tau)-\log P_x \zeta_{\ch}(s+i\tau)\right|^2 d\tau < \eta\]
for $x$ sufficiently large, given an arbitrary $\eta>0$. We infer from this, by Chebyshev's inequality, that
\[ m\left(\left\{\tau\in D, \tau\le T: \ \max_{s\in K} \left|\log \zeta_{\chi}(s+i\tau)-\log P_x \zeta_{\ch}(s+i\tau)\right| \ge \varepsilon \right\}\right) 
\le \left(2\eta/ \varepsilon^2\right) T\]
for $T$ large enough. Since $d(D)=1$, this entails that
\begin{equation} \label{eq:epseta} m\left( \left\{\tau\in D, \tau\le T: \ \max_{s\in K} \left|\log \zeta_{\chi}(s+i\tau)-\log P_x \zeta_{\ch}(s+i\tau)\right| < \varepsilon \right\}\right)
\ge \left(1- 3\eta/\varepsilon^2\right) T \end{equation}
for sufficiently large $T$. We now observe that if $z$ and $w$ are two arbitrary complex numbers, then
\[ |z-w|<\varepsilon \quad \text{implies} \quad \left|\exp\left(z-w\right)-1\right|<e^{\varepsilon}-1. \]
Therefore, since we can make $\eta$ as small as we wish by choosing $x$ sufficiently large, \eqref{eq:epseta} implies that, given an arbitrary compact subset $K$ of $\Omega$ and $\varepsilon, \delta>0$, there exists a positive number $Y$ such that
\begin{equation} \label{eq:finite} \underline{d}\left( \left\{\tau>0: \ \max_{s\in K}\left|1-\frac{\zeta_{\chi}(s+i\tau)}{P_x \zeta_{\chi}(s+i\tau)}\right|<\varepsilon \right\}\right)\ge 1-\delta \end{equation}
whenever $x\ge Y$.

Now let $f(s)$ be any function in $H^{*}(\Omega)$ and $K$ any compact subset of $\Omega$. We apply Lemma~\ref{lem:uni} with $\theta=1/3$ and an arbitray $\varepsilon>0$.
Accordingly, there exist a set $A$ of positive density and a positive number $X$ such that
the set
\[ A_x:=\left\{\tau\in A:  \ \max_{s\in K} \left|P_x \zeta_{\chi}(s+i\tau)-P_X \zeta_{\chi}(s+i\tau)\right|<\varepsilon \right\} \]
satisfies
\[ \underline{d}(A_x) \ge 2d(A)/3 \]
whenever $x\ge X$. Moreover, using also conclusion (1) of Lemma~\ref{lem:uni} and setting
\[ M:=\max_{s\in K} |f(s)|+2\varepsilon , \]
we have
\begin{equation} \label{eq:max} \max_{s\in K} \left|P_x \zeta_{\chi}(s+i\tau)\right|<M \end{equation}
for every $\tau$ in $A_x$ when $x\ge X$. Then setting 
\[ B_x:=\left\{\tau >0: \ \max_{s\in K}\left|1-\frac{\zeta_{\chi}(s+i\tau)}{P_x \zeta_{\chi}(s+i\tau)}\right|<\varepsilon \right\} ,\]
we may infer from the triangular inequality that 
\[ \max_{s\in K} \left|\zeta_{\chi}(s+i\tau)-f(s) \right| < 2\varepsilon+M \varepsilon  \]
when $\tau$ is in $A_x\cap B_x$ and $x \ge X$. Choosing $\delta=d(A)/3$ in \eqref{eq:finite}, we find that $\underline{d}(B_x)\ge 1-d(A)/3$ for $x\ge Y$, and we therefore have
\begin{align*}  \underline{d}\left(\left\{ \tau>0: \  \max_{s\in K} \left|\zeta_{\chi}(s+i\tau)-f(s) \right| < 2\varepsilon+M \varepsilon   \right\} \right) & \ge \underline{d}(A_x\cap B_x) \\ & \ge \underline{d}(A_x)+\underline{d}(B_x) - 1 \ge d (A)/3\end{align*} 
when both $x \ge X$ and $x\ge Y$. This concludes the proof, since $\varepsilon$ may be suitably adjusted. 
\end{proof}

We close this section by observing that we could have dropped the proviso that $\mathcal{Z}$ satisfy the Bohr--Landau condition in Theorem~\ref{thm:vor2}, if we required
$\mathcal{Z}$ to be a multiset instead of a signed multiset. Indeed, setting
\[ n_{\mathcal{Z}}(\sigma,T):=\sum_{\rho=\beta+i\gamma\in\mathcal{Z}, \beta>\sigma, T< |\gamma|\le T} m_{\mathcal{Z}}(\rho), \]
we may use a classical formula of Littlewood \cite[(9.9.1) p. 220]{T} to deduce that
\begin{equation} \label{eq:Little} \int_{R(\sigma,T)} \Big(\log \zeta_{\chi}(s)-\log P_x \zeta_{\chi}(s) \Big) ds = -2\pi i \int_{\sigma}^2 n_{\mathcal{Z}}(u,T) du, \end{equation}
where $R(\sigma,T)$ is the contour obtained by traversing the boundary of the rectangle 
\[ \big\{u+iv:\ \sigma\le u \le 2, |v|\le T \big\} \] 
in the counterclockwise direction. Assuming that \eqref{eq:logint} holds uniformly for $\sigma\ge \sigma_0$ whenever $\sigma_0>1/2$, we find that
\[ \int_{-T-1}^{T+1} \int_{\sigma_0}^2 \left|\log \zeta_\chi(\sigma+it)-\log P_x \zeta_{\chi}(\sigma+it)\right|^2 dt d\sigma \le
C T \sum_{p>x} (\log p)^{-1} p^{-2\sigma_0} \]
when $T$ is large enough. Hence there exists a $\xi$ in $[0,1]$ such that
\[  \int_{\sigma_0}^2 \left|\log \zeta_\chi(\sigma\pm i(T+\xi))-\log P_x \zeta_{\chi}(\sigma\pm i(T+\xi))\right|^2  d\sigma \le
C T \sum_{p>x} (\log p)^{-1} p^{-2\sigma}. \]
Returning to \eqref{eq:Little} and applying the Cauchy--Schwarz inequality along each side of the rectangle, we infer that
\begin{equation}\label{eq:Nbound} \Bigg| \int_{\sigma}^2 n_{\mathcal{Z}}(u,T) du \Bigg| \ll T \Bigg(\sum_{p>x} p^{-2\sigma_0} \Bigg)^{1/2}. \end{equation}
If $\mathcal{Z}$ is a multiset, then $n_{\mathcal{Z}}(u,T)=N_{\mathcal{Z}}(u,T)$, so that \eqref{eq:Nbound} entails that
\[ N_{\mathcal{Z}}(\sigma,T) \ll  T \Bigg(\sum_{p>x} p^{-2\sigma_0} \Bigg)^{1/2} \]
for every $\sigma>\sigma_0$. This holds for every fixed $x\ge 1$ and sufficiently large $T$, and hence it implies the Bohr--Landau condition. 

Because of possible cancellations in the sum defining $n_{\mathcal{Z}}(u,T)$, we may not conclude similarly from \eqref{eq:Nbound} in the general case. A variant of the constructions of Section~\ref{sec:main} and Section~\ref{sec:proofany} may in fact be used to show that \eqref{eq:logint} may hold even if the Bohr--Landau condition fails. Hence the condition that $\mathcal{Z}$ satisfy the Bohr--Landau is not obsolete in Theorem~\ref{thm:vor2}.
\section{Helson zeta functions of bounded type}\label{sec:finite}

\subsection{Functions of bounded type and the canonical factorization} We will now establish some useful facts about Helson zeta functions that are of bounded type in some half-plane. This discussion will in particular allow us to prove, in the next subsection, that Theorem~\ref{thm:vor1} is a consequence of Theorem~\ref{thm:vor2} . 

To begin with, we recall that a meromorphic function $h(s)$ in a domain $D$ of the complex plane is said to be a function of bounded type if we may write 
$h(s)=f(s)/g(s)$, with $f(s), g(s)$ bounded analytic functions in $D$. It is a classical fact that a function of bounded type in some half-plane $\mathbb{C}_{\alpha}$ admits the following canonical factorization (see for example \cite[p. 197]{Ne}). First, a signed multiset $\mathcal{Z}$ in $\mathbb{C}_{\alpha}$ will constitute the zeros and poles of some function $h(s)$ of bounded type in $\mathbb{C}_{\alpha}$ if and only it satisfies the Blaschke condition
\begin{equation} \label{eq:blaschke}  \sum_{\rho=\beta+i\gamma \in \mathcal{Z}} |m_{\mathcal{Z}}(\rho)| \frac{(\beta-\alpha)}{1+\gamma^2} < \infty .\end{equation}
Based on this fact, we introduce 
functions of the form
\[ b_{\rho;\alpha}(s):= \left(\frac{s-\rho}{s-2\alpha+\overline{\rho}}\right)\cdot \left(\frac{1-\alpha+\overline{\rho}}{1+\alpha-\rho} \right)\cdot \left|\frac{1+\alpha-\rho} {1-\alpha+\overline{\rho}}\right| \]
as the ``atoms'' of our representation of zeros and poles when $\rho\neq 1+\alpha$.  
The function $b_{\rho; \alpha}(s)$ is a conformal map of $\mathbb{C}_{\alpha}$ to the unit disc, sending $\rho$ to $0$, normalized to make the (generalized) Blaschke product
\[ B_{\mathcal{Z};\alpha}(s):=\left(\frac{s-1-\alpha}{s+1-\alpha}\right)^{m_{\mathcal{Z}}(1+\alpha)}\prod_{\rho \in \mathcal{Z}, \rho\neq 1+\alpha} \left[b_{\rho;\alpha}(s)\right]^{m_{\mathcal{Z}}(\rho)} \] 
absolutely convergent for every $s$ in $\mathbb{C}_{\alpha}$ when \eqref{eq:blaschke} holds. 

We use also the fact that $h(\sigma+it)$ tends to a finite boundary value, called $h(\alpha+it)$, when $\sigma\searrow \alpha$ for almost every point $t$ of 
the real line. In fact, this boundary function will satisfy 
\[ \int_{-\infty}^{\infty} \frac{|\log |h(\alpha+it)||}{1+t^2} dt < \infty, \]
which allows us to introduce the outer function
\begin{equation} \label{eq:G} U(s):=\exp\left(\frac{1}{\pi} \int_{-\infty}^{\infty} \left[ \frac{1}{s-\alpha-ix}-\frac{ix}{1+x^2}\right]\log|h(\alpha+i x)| dx\right).\end{equation}
In general, the canonocial factorization takes the form
\begin{equation} \label{eq:factor}  h(s)=B_{\mathcal{Z};\alpha}(s) S(s) e^{a(s-\alpha)+ib} U(s), \end{equation}
where $a$ and $b$ are real numbers and $S(s)$ is the ratio of two singular inner functions, represented by a singular measure on the line $\Re s =\alpha$.

We will only be interested in the case when $h(s)$ is a Helson zeta function $\zeta_{\chi}(s)$ that extends meromorphically to the closed half-plane 
$\overline{\mathbb{C}_{\alpha}}$. Then the factor $S(s)$ will not be present in the canonical factorization of $h(s)$. Also, since $\log |\zeta_{\chi}(\sigma)| \to 0$ when $\sigma\to \infty$ and we may show that the remaining part of the product will give a contribution of size $o(\sigma)$ to $\log |\zeta_{\chi}(\sigma)|$, it is clear that $a=0$. The unimodular factor $e^{ib}$ may now be absorbed in $B_{\mathcal{Z};\alpha}(s)$ so that \eqref{eq:factor} reduces to
\[ \zeta_{\chi}(s)=B_{\mathcal{Z};\alpha}(s)  U(s) \]
when $\zeta_{\chi}(s)$ is of bounded type in $\mathbb{C}_{\alpha}$ and meromorphic in $\overline{\mathbb{C}_{\alpha}}$. 

It is immediate that if $\zeta_{\chi}(s)$ is of finite order in $\mathbb{C}_{\alpha}$, then it is also of bounded type. Indeed, if $\zeta_{\chi}(s)$ has no pole on the $1$-line, then $\zeta_{\chi}(s)$ is the ratio of the two bounded analytic functions $\zeta_{\chi}(s)/(s+\alpha+1)^A$ and $1/(s+\alpha+1)^A$ for a suitable $A\ge 0$. If $\zeta_\chi (s)$ has a simple pole at $1+it_0$, we just multiply each of these function by $s-1-it_0$ and get the same result. 

It requires a little more to see that the functions constructed in Theorem~\ref{thm:main} and Theorem~\ref{thm:main3}, via the conditions of Theorem~\ref{thm:vor2}, will also be of bounded type. This fact is of some general interest, but since we will not use it in the sequel, we only sketch the argument. We observe to begin with that plainly, if $\mathcal{Z}$ satisfies condition (c) of Theorem~\ref{thm:main}, then the Blaschke condition \eqref{eq:blaschke} holds for every $\alpha>1/2$. Next, by the Cauchy--Schwarz inequality,
\[ \int_{\sigma-i\infty}^{\sigma+i\infty} \frac{|\log h(\sigma+it)|}{1+t^2} dt \le 
\sqrt{\pi} \left(\int_{\sigma-i\infty}^{\sigma+i\infty} \frac{|\log h(\sigma+it)|^2}{1+t^2} dt\right)^{1/2}, \]
and furthermore,
\begin{align*} \int_{\sigma-i\infty}^{\sigma+i\infty} \frac{|\log h(\sigma+it)|^2}{1+t^2} dt & 
\ll \sum_{k=0}^\infty 2^{-2k} \int_{\sigma-i2^k}^{\sigma+i 2^k} |\log h(\sigma+it)|^2 dt \\
& \le 2 \sum_{k=0}^\infty 2^{-k} \sup_{T\ge 1}\frac{1}{2T} \int_{\sigma-iT}^{\sigma+i T} |\log h(\sigma+it)|^2 dt. \end{align*}
These two bounds show that \eqref{eq:logint} with $x=1$ implies
\[ 
\sup_{\sigma\ge\alpha} \int_{-\infty}^{\infty} \frac{\left|\log \zeta_{\chi}(\sigma+it)\right|}{1+t^2} dt < \infty. \]
We may use the Blaschke condition to show that, similarly,
\[ \sup_{\sigma\ge\alpha} \int_{-\infty}^{\infty} \frac{\left|\log B_{\mathcal{Z};\alpha}(\sigma+it)\right|}{1+t^2} dt < \infty, \]
It then follows that the function
\[ h_R(s):=\left[\frac{\zeta_{\chi}(s)}{B_{\mathcal{Z};\alpha}(s)}\right]^{\left(\frac{R+\alpha}{R+s}\right)^2} \]
is a bounded zero-free function in the half-plane $\mathbb{C}_{\alpha+\varepsilon}$ for every $R>-\alpha$ and $\varepsilon>0$, and hence
\[ h_R(s)=
\exp\left(\frac{1}{\pi} \int_{-\infty}^{\infty} \left[ \frac{1}{s-\alpha-\varepsilon-ix}-\frac{ix}{1+x^2}\right]\log\left|h_R(\alpha+\varepsilon+ i x)\right| dx\right) \] 
when $0<\varepsilon<\sigma-\alpha$. We now let $R \to\infty$ and use Lebesgue's dominated convergence theorem to deduce that
\[ \frac{\zeta_{\chi}(s)}{B_{\mathcal{Z};\alpha}(s)}=\exp\left(\frac{1}{\pi} \int_{-\infty}^{\infty} \left[ \frac{1}{s-\alpha-\varepsilon-ix}-\frac{ix}{1+x^2}\right]\log\left|
\frac{\zeta_{\chi}(\alpha+\varepsilon+ i x)}{B_{\mathcal{Z};\alpha}(\alpha+\varepsilon+ix)}\right| dx\right). \]
Since this holds for every $\varepsilon>0$, we may use a normal family argument to infer that $\zeta_{\chi}(s)/B_{\mathcal{Z};\alpha}(s)$ is the ratio of two bounded analytic functions in $\mathbb{C}_{\alpha}$. This is exactly what we need to conclude that $\zeta_{\chi}(s)$ is of bounded type in 
$\mathbb{C}_{\alpha}$.
 
We turn to some bounds that in particular illustrate that much stronger conclusions may be drawn if we assume $\zeta_{\chi}(s)$ to be of finite order rather than just of bounded type. 
 
\begin{lemma} \label{lem:innout}
Suppose that $\sigma(\chi)\le \alpha<1$ and that $\zeta_{\chi}(s)$ is meromorphic in $\overline{\mathbb{C}_{\alpha}}$ and of bounded type in $\mathbb{C}_{\alpha}$. Then 
the outer function $U(s)$ of the canonical factorization of $\zeta_{\chi}(s)$ satisfies
 \begin{equation} \label{eq:logt} \frac{1}{\pi} \int_{-\infty}^{\infty} \frac{|\log |U(\alpha+ix)||}{(x-t)^2+1} dx =
 \begin{cases} O( \log |t|), & \text{$\zeta_{\chi}(s)$ is of finite order in $\overline{\mathbb{C}_{\alpha}}$} \\
                                     o(t^2), & \text{otherwise} \end{cases} \end{equation} 
for $|t|\ge 2$.
\end{lemma}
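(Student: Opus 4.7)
The plan is to interpret the integral in the statement as the Poisson integral of $|\log|U||$ at the interior point $s_* = \alpha + 1 + it$ of $\mathbb{C}_\alpha$, since the kernel $\tfrac{1}{\pi((x-t)^2+1)}$ is exactly the Poisson kernel for $\mathbb{C}_\alpha$ at height $1$ above $\alpha+it$. Because $U$ is outer,
\[\log|U(\alpha+1+it)| = \frac{1}{\pi}\int_{-\infty}^{\infty} \frac{\log|U(\alpha+ix)|}{(x-t)^2+1}\,dx,\]
and decomposing $|\log|U|| = 2\log^+|U| - \log|U|$ reduces the quantity to be bounded to
\[\frac{2}{\pi}\int \frac{\log^+|U(\alpha+ix)|}{(x-t)^2+1}\,dx \;-\; \log|U(\alpha+1+it)|.\]
I would then bound these two pieces separately, exploiting the fact that $|U(\alpha+ix)|=|\zeta_\chi(\alpha+ix)|$ a.e.\ on the boundary (since $|B_{\mathcal{Z};\alpha}|=1$ a.e.\ there and no singular inner factor is present).

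For the Poisson integral of $\log^+|U|$, the two cases diverge. In the finite-order case, the hypothesis gives $\log^+|\zeta_\chi(\alpha+ix)| \le A\log(|x|+2)+O(1)$, and a standard computation of the Poisson kernel against $\log(|x|+2)$ yields $O(\log|t|)$. In the general bounded-type case, the Nevanlinna condition $\int \log^+|\zeta_\chi(\alpha+ix)|/(1+x^2)\,dx<\infty$, combined with the elementary pointwise estimate $\frac{1+x^2}{(x-t)^2+1}\le 5(1+t^2)$ (verified by splitting into $|x|\le 2|t|$ and $|x|>2|t|$), gives a preliminary bound of $O(t^2)$. To upgrade this to $o(t^2)$, I would truncate at a fixed $R$ chosen so that $\int_{|x|>R}\log^+|U(\alpha+ix)|/(1+x^2)\,dx<\varepsilon$: the inner part is controlled by $(x-t)^2\ge t^2/4$ for $|t|>2R$, yielding $O(1/t^2)$, while the outer part contributes at most $5\varepsilon(1+t^2)$.

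For the remaining term $\log|U(\alpha+1+it)|$, I would write $\log|U|=\log|\zeta_\chi|-\log|B_{\mathcal{Z};\alpha}|$. Under the natural assumption $\alpha>0$ that covers the intended range of applications, the point $\alpha+1+it$ lies strictly in the half-plane of absolute convergence of the Euler product, so $|\zeta_\chi(\alpha+1+it)|$ is pinned between two positive constants and $\log|\zeta_\chi(\alpha+1+it)|=O(1)$. For the Blaschke factor, an elementary computation yields $-\log|b_{\rho;\alpha}(\alpha+1+it)|^2 \asymp (\beta-\alpha)/(1+(t-\gamma)^2)$ for each $\rho=\beta+i\gamma$, reducing the bound to controlling $\sum_\rho|m_{\mathcal{Z}}(\rho)|(\beta-\alpha)/(1+(t-\gamma)^2)$. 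The Blaschke condition \eqref{eq:blaschke} ensures summability, and the same truncation scheme as above turns this into $O(\log|t|)$ under the finite-order density and into $o(t^2)$ in the general case.

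The main obstacle I anticipate is the $o(t^2)$ upgrade in the bounded-type case: the Nevanlinna finiteness alone only guarantees $O(t^2)$, and extracting the little-oh requires carefully exploiting the concentration of the Poisson kernel near $x=t$ together with the summability of $(\beta-\alpha)/(1+\gamma^2)$ from the Blaschke condition. The other ingredients---the outer-function Poisson representation, the unimodular boundary behaviour of $B_{\mathcal{Z};\alpha}$, and the Euler-product estimate at $\sigma=\alpha+1$---are either immediate from the hypotheses or amount to routine verifications.
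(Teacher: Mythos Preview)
Your approach is correct and shares the paper's core idea: interpret the integral as a Poisson integral, use $|\log|U||=2\log^+|U|-\log|U|$, bound $\log^+|U|$ on the boundary, and control the interior value via $|\zeta_\chi|=|B_{\mathcal{Z};\alpha}||U|$.

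Two points of execution differ. First, the paper evaluates at $\sigma=2$ rather than $\sigma=\alpha+1$; since $2>1$ unconditionally, this removes your restriction $\alpha>0$, and the kernels $\bigl((x-t)^2+1\bigr)^{-1}$ and $\bigl((x-t)^2+(2-\alpha)^2\bigr)^{-1}$ are comparable. Second, in the bounded-type case the paper bypasses both the decomposition and the Blaschke factor altogether: it argues directly from $\int|\log|U||/(1+x^2)\,dx<\infty$ by the split $|x|\lessgtr |t|/2$, giving $O(1)$ on the near part and $t^2\int_{|x|>|t|/2}|\log|U||/(1+x^2)\,dx=o(t^2)$ on the far part. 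Your detailed estimate of $\sum_\rho|m_{\mathcal{Z}}(\rho)|(\beta-\alpha)/(1+(t-\gamma)^2)$ is therefore unnecessary here, and in the finite-order case only the trivial bound $\log|B_{\mathcal{Z};\alpha}(2+it)|\le C$ (from at most one simple pole) is needed rather than a zero-density input.
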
 
 \begin{proof}
From \eqref{eq:G} we find that
\begin{equation} \label{eq:Poiss}\log |U(\sigma+it)| = \frac{1}{\pi} \int_{-\infty}^{\infty} \frac{(\sigma-\alpha)\log|U(\alpha+ix)|}{(x-t)^2+(\sigma-\alpha)^2} dx. \end{equation}
We begin with the case when $\zeta_{\chi}(s)$ is of finite order. Then $\log |B_{\mathcal{Z};\alpha}(2+it)|\le C$ for some constant $C$ because $B_{\mathcal{Z};\alpha}(s)$ has at most one pole, which is simple and located on the $1$-line. Since we also have $\log |\zeta_{\chi}(2+it)|=O(1)$, it follows that
\[ |U(2+it)| \ge C \]
for some positive constant $C$.
In view of \eqref{eq:Poiss} and the bound $\log |U(\alpha+ix)|\le c\log (|x|+2)$, we conclude that
\eqref{eq:logt} holds. 

When $\zeta_{\chi}(s)$ is of bounded type, we just observe that
\[  \int_{-\infty}^{\infty} \frac{|\log |U(\alpha+ix)||}{(x-t)^2+1} dx \ll t^2 \int_{|x|\ge |t|/2} \frac{|\log |U(\alpha+ix)||}{1+x^2} dx +O(1)=o(t^2). \]\end{proof}

Lemma~\ref{lem:innout} yields the following ``explicit formula''.
\begin{lemma}\label{lem:prime}
Let the assumptions be as in Lemma~\ref{lem:innout}. Then
\[ \frac{\zeta'_{\chi}(s)}{\zeta_{\chi}(s)}=\sum_{\rho} m_{\mathcal{Z}}(\rho) \left(\frac{1}{s-\rho}-\frac{1}{s-2\alpha+\overline{\rho}}\right)
+\frac{U'(s)}{U(s)}, \]
where 
\[ \frac{U'(s)}{U(s)}=\begin{cases}  O(\log |t|), & \text{$\zeta_{\chi}(s)$ of finite order} \\
                                                 o\left(t^2\right), &  \text{otherwise}\end{cases}\]
when $|t|\to \infty$ and $\Real s\ge \sigma_0$ for every fixed $\sigma_0>\alpha$.
\end{lemma}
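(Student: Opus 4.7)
\begin{proof*}(Proof plan.)
The strategy is to differentiate the canonical factorization
\[ \zeta_{\chi}(s)=B_{\mathcal{Z};\alpha}(s)\,U(s) \]
established in the preceding subsection (recall that the singular factor $S(s)$ and the exponential $e^{a(s-\alpha)+ib}$ drop out under the present hypotheses), so that
\[ \frac{\zeta'_{\chi}(s)}{\zeta_{\chi}(s)} = \frac{B'_{\mathcal{Z};\alpha}(s)}{B_{\mathcal{Z};\alpha}(s)} + \frac{U'(s)}{U(s)}. \]

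First I would handle the Blaschke factor. From the definition of $b_{\rho;\alpha}(s)$, the unimodular and positive real constants contribute nothing to the logarithmic derivative, so each atom gives
\[ \frac{b'_{\rho;\alpha}(s)}{b_{\rho;\alpha}(s)} = \frac{1}{s-\rho} - \frac{1}{s-2\alpha+\overline{\rho}}, \]
and the distinguished factor $(s-1-\alpha)/(s+1-\alpha)$ at $\rho=1+\alpha$ fits the same pattern (since $2\alpha-\overline{\rho}=\alpha-1$). Termwise differentiation and summation are justified by the Blaschke condition \eqref{eq:blaschke}: on any compact $K\subset\mathbb{C}_{\alpha}\setminus\mathcal{Z}$ one has
\[ \left|\frac{1}{s-\rho} - \frac{1}{s-2\alpha+\overline{\rho}}\right| \le \frac{2(\beta-\alpha)}{|s-\rho|\,|s-2\alpha+\overline{\rho}|} \ll_{K} \frac{\beta-\alpha}{1+\gamma^{2}}, \]
so the sum converges absolutely and uniformly on $K$. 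This produces the stated sum $\sum_{\rho}m_{\mathcal{Z}}(\rho)\bigl(\frac{1}{s-\rho}-\frac{1}{s-2\alpha+\overline{\rho}}\bigr)$.

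Next I would turn to $U'(s)/U(s)$. Differentiating under the integral in the representation \eqref{eq:G} (legitimate by dominated convergence, using the decay of $(s-\alpha-ix)^{-2}$ and the fact that $\log|U(\alpha+ix)|$ is integrable against $(1+x^{2})^{-1}$, which is part of the canonical factorization hypothesis) yields
\[ \frac{U'(s)}{U(s)} = -\frac{1}{\pi}\int_{-\infty}^{\infty}\frac{\log|U(\alpha+ix)|}{(s-\alpha-ix)^{2}}\,dx. \]
For $\Real s \ge \sigma_{0} > \alpha$ and $s=\sigma+it$, the elementary estimate
\[ |s-\alpha-ix|^{2} = (\sigma-\alpha)^{2}+(x-t)^{2} \ge c_{\sigma_{0}}\bigl((x-t)^{2}+1\bigr) \]
with $c_{\sigma_{0}}=\min\{1,(\sigma_{0}-\alpha)^{2}\}$ gives
\[ \left|\frac{U'(s)}{U(s)}\right| \ll \frac{1}{\pi}\int_{-\infty}^{\infty}\frac{|\log|U(\alpha+ix)||}{(x-t)^{2}+1}\,dx, \]
and the two cases of Lemma~\ref{lem:innout} now deliver exactly the $O(\log|t|)$ bound when $\zeta_{\chi}(s)$ is of finite order in $\overline{\mathbb{C}_{\alpha}}$ and the $o(t^{2})$ bound in the general bounded-type case.

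The main technical point will be justifying termwise differentiation and summation of the Blaschke product away from $\mathcal{Z}$; this is standard but must be checked with some care because the pointwise convergence of $B_{\mathcal{Z};\alpha}(s)$ does not automatically give convergence of the logarithmic derivative. The bound displayed above, however, is enough to promote locally uniform convergence of the partial products to locally uniform convergence of their logarithmic derivatives, after which the identification of the sum of logarithmic derivatives as $B'_{\mathcal{Z};\alpha}/B_{\mathcal{Z};\alpha}$ is immediate.
\end{proof*}
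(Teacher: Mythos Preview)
Your proof is correct and follows essentially the same approach as the paper: differentiate the canonical factorization $\zeta_{\chi}=B_{\mathcal{Z};\alpha}\,U$, identify the logarithmic derivative of the Blaschke product as the stated sum, compute $U'/U$ by differentiating \eqref{eq:G} under the integral sign, and then invoke Lemma~\ref{lem:innout}. You supply more detail than the paper does---in particular the explicit justification of termwise differentiation via the Blaschke condition and the pointwise bound $|s-\alpha-ix|^{2}\ge c_{\sigma_0}\bigl((x-t)^{2}+1\bigr)$---but the structure is identical.
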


\begin{proof}
We see from \eqref{eq:G} that
\[ \frac{U'(s)}{U(s)}=-\frac{1}{\pi} \int_{-\infty}^{\infty} \frac{\log |U(\alpha+ix)|}{(s-\alpha-ix)^2} dx. \]
Hence by \eqref{eq:logt}, 
\[  \frac{U'(s)}{U(s)}= \begin{cases}  O(\log |t|), & \text{$\zeta_{\chi}(s)$ of finite order} \\
                                                 o\left(t^2\right), &  \text{$\zeta_{\chi}(s)$ is of bounded type}\end{cases} \]
when $\sigma\ge \sigma_0$ for some fixed $\sigma_0 >\alpha$. Since the sum is just the logarithmic derivative of $B_{\mathcal{Z};\alpha}(s)$, the result follows from the canonical factorization.
\end{proof}
The bound for $U'(s)/U(s)$ may seem rather inordinate in the case when $\zeta_{\chi}(s)$ is assumed only to be of bounded type. We will nevertheless be able to make use of this estimate in Subsection~\ref{sec:zerofree} to draw some nontrivial conclusions about zero-free regions.

\subsection{Proof that Theorem~\ref{thm:vor2} implies Theorem~\ref{thm:vor1}}

We begin by establishing the following simple fact.
\begin{lemma}\label{lem:zeros}
Suppose that $\sigma(\chi)\le \alpha<1$ and that $\zeta_{\chi}(s)$ is of finite order in $\overline{\mathbb{C}_{\alpha}}$. Then for every $\sigma>\alpha$,
\[ N(\chi,\sigma,T+1)-N(\chi,\sigma, T)=O(\log T). \]
\end{lemma}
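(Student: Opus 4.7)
The plan is the standard Jensen's formula count, with an initial step to clear the only possible pole. By Theorem~\ref{thm:PNT}, any pole of $\zeta_\chi$ in $\overline{\mathbb{C}_\alpha}$ is simple and must lie at some point $1+it_0$ on the $1$-line. Setting $F(s) := (s - 1 - it_0)\zeta_\chi(s)$, or $F := \zeta_\chi$ if no pole is present, produces an analytic function on $\overline{\mathbb{C}_\alpha}$ still of finite order, satisfying $|F(\sigma+it)| = O(|t|^{A+1})$ uniformly for $\sigma \ge \alpha$. The excised pole contributes to at most two of the unit strips, so it suffices to bound the number of zeros of $F$ with $\beta > \sigma$ and $T \le \gamma \le T+1$ by $O(\log T)$; the symmetric strip $-(T+1) \le \gamma \le -T$ is handled by an identical argument with $s_0 := C - iT$.

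Fix $\sigma > \alpha$ and pick a real constant $C \ge 2$ large enough that $(\sigma-\alpha)(2C-\sigma-\alpha) > 1$. With $s_0 := C + iT$, $R := C - \alpha$, and $r := \sqrt{(C-\sigma)^2 + 1}$, the inequality ensures $R > r$, so the rectangle $\{s: \sigma < \Re s \le C,\ T \le \Im s \le T+1\}$ lies in the inner disc $|s-s_0| \le r$, while the outer disc $|s-s_0| \le R$ is contained in $\mathbb{C}_\alpha$. Because $C \ge 2$, the Dirichlet series for $\zeta_\chi$ converges absolutely on the vertical line through $s_0$, giving $|\zeta_\chi(s_0)| \asymp 1$; combined with $|s_0 - 1 - it_0| \asymp T$ for large $T$, this yields $|F(s_0)| \gg T$. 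The finite-order hypothesis furnishes $|F(s_0 + Re^{i\theta})| \ll T^{A+1}$ uniformly in $\theta$.

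Applying Jensen's formula to $f(s) := F(s_0+s)/F(s_0)$ on $|s| \le R$, using $f(0) = 1$, gives
\[ \int_0^R \frac{n_F(t)}{t}\, dt = \frac{1}{2\pi}\int_0^{2\pi} \log|f(Re^{i\theta})|\, d\theta, \]
where $n_F(t)$ is the number of zeros of $F$ in $|s-s_0| \le t$. The bounds above force $\log^+|f(Re^{i\theta})| = O(\log T)$, and since the left-hand side is non-negative, the right-hand side is trapped between $0$ and $O(\log T)$. Monotonicity of $n_F$ then gives $n_F(r)\log(R/r) \le \int_r^R n_F(t)/t\, dt = O(\log T)$, hence $n_F(r) = O(\log T)$, which bounds the zeros of $F$ in the rectangle and completes the count.

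The only genuine subtlety is engineering $|F(s_0)| \gg T$: without the linear factor, $|\zeta_\chi(s_0)|$ alone is only $\asymp 1$, which would be wasted against the extra factor of $T$ that the linear factor contributes to $|F|$ on the outer circle. Introducing $(s - 1 - it_0)$ simultaneously removes the pole and supplies the $T$-sized lower bound needed to cancel that extra factor, so the ratio on the right of Jensen's formula stays of size $T^A$ and produces $O(\log T)$ rather than $O(\log T + \log T) = O(\log T)$ with a hidden $T$ lurking inside. All other ingredients are entirely routine.
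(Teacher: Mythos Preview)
Your proof is correct and follows essentially the same approach as the paper: invoke Theorem~\ref{thm:PNT} to control the possible pole on the $1$-line, then apply Jensen's formula in discs whose outer radius stays inside $\overline{\mathbb{C}_\alpha}$ and whose inner radius captures the relevant unit strip. The paper's version is terser---it centres the discs on the $1$-line with radius $1-\sigma/2-\alpha/2$ and simply works away from the pole (which lies in at most one such disc), citing \cite[Thm.~13.5]{T} for the routine details---whereas you centre at $\Real s = C \ge 2$ and multiply the pole out explicitly via the factor $(s-1-it_0)$; both choices are standard and equivalent in effect.
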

\begin{proof}
We use again Theorem~\ref{thm:PNT} according to which $\zeta_{\chi}(s)$ has at most one pole on the $1$-line. This means that, away from this possible pole, we may apply Jensen's formula in discs of radius $1-\sigma/2-\alpha/2$ centered on the $1$-line to conclude in a similar way as in \cite[Thm. 13.5]{T}. 
\end{proof}

The preceding lemmas enable us to show that Theorem~\ref{thm:vor2} implies Theorem~\ref{thm:vor1}. 

\vbox{\begin{theorem}\label{thm:BLthm}
Suppose that $\sigma(\chi)\le 1/2$ and that $\zeta_{\chi}(s)$ is of finite order in  $\overline{\mathbb{C}_{\alpha}}$ and satisfies
\begin{equation} \label{eq:l2} \sup_{T\ge 1} \frac{1}{2T} \int_{-T}^T \left|\zeta_\chi(\alpha+it)\right|^2 dt <\infty \end{equation}
whenever $1/2<\alpha<1$.
Then $\zeta_{\chi}(s)$ satisfies the Bohr--Landau condition and
\begin{equation} \label{eq:secl} \lim_{T\to \infty}\frac{1}{2T} \int_{-T}^{T} \left|\log \zeta_\chi(\sigma+it)-P_x \log \zeta_{\chi}(\sigma+it)\right|^2 dt = \ \sum_{p>x} \sum_{j=1}^{\infty} j^2  p^{-2j\sigma}, \end{equation} 
uniformly whenever $\sigma\ge \alpha$ and $\alpha>1/2$.
\end{theorem}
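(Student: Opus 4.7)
Both conclusions will come from Carlson's classical theorem for Dirichlet series, once the $L^{2}$ control on $\zeta_{\chi}$ provided by \eqref{eq:l2} is translated into analogous control on $\log\zeta_{\chi}$. The main work is in executing this translation; the Bohr--Landau bound is the standard by-product along the way.

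For the Bohr--Landau condition, I apply Littlewood's formula to $\zeta_{\chi}$ on the rectangle with vertical sides at $\sigma_{0}$ and $2$ (with $1/2<\sigma_{0}<\sigma$) and horizontal sides at $\pm T$:
\[ 2\pi\!\int_{\sigma_{0}}^{2}\!n(\chi,u,T)\,du \;=\; \int_{-T}^{T}\log|\zeta_{\chi}(\sigma_{0}+it)|\,dt \;-\; \int_{-T}^{T}\log|\zeta_{\chi}(2+it)|\,dt \;+\; O(\log T), \]
the $O(\log T)$ error on the horizontal sides coming from the finite-order hypothesis via the standard argument-principle estimate (as in \cite[Ch.~9]{T}). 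The integral at $\sigma=2$ is $o(T)$ because the Dirichlet series of $\log\zeta_{\chi}$ converges absolutely there with vanishing constant term. The first integral on the right is handled by Jensen's inequality and Carlson's theorem for $\zeta_{\chi}$, yielding the classical Bohr--Landau conclusion $N(\chi,\sigma,T)=o(T)$.

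For \eqref{eq:secl}, I apply Carlson's theorem to
\[ F_{x}(s):=\log\zeta_{\chi}(s)-\log P_{x}\zeta_{\chi}(s)=\sum_{p>x}\sum_{j\geq 1}\frac{\chi(p)^{j}}{j}\,p^{-js}\qquad(\Real s>1), \]
a Dirichlet series whose coefficient at $n=p^{j}$ has modulus $1/j$; formal Parseval then gives $\sum_{p>x}\sum_{j\geq 1}j^{-2}p^{-2j\sigma}$, which I read as the intended right-hand side of \eqref{eq:secl} (the displayed $j^{2}$ being a typographical slip for $j^{-2}$). Carlson's theorem delivers this limit, uniformly in $\sigma\ge\alpha$ for every $\alpha>1/2$, once I verify (a) $F_{x}$ is analytic and of finite order in $\Real s>1/2$ on the cut half-plane specified in \S3, and (b) $F_{x}$ has $L^{2}$-mean uniformly bounded on every vertical line $\Real s\ge\alpha>1/2$. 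Condition~(a) follows from the finite order of $\zeta_{\chi}$ combined with Lemma~\ref{lem:zeros} and the Bohr--Landau bound just obtained, which together ensure that the horizontal cuts from zeros are sparse enough not to spoil the polynomial growth estimate.

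The principal obstacle is (b), because \eqref{eq:l2} bounds $\zeta_{\chi}$ in $L^{2}$ but not $\log|\zeta_{\chi}|$, which can be arbitrarily negative near zeros of $\zeta_{\chi}$. I circumvent this via the canonical factorization $\zeta_{\chi}=B_{\mathcal{Z};\alpha}\cdot U$ on $\overline{\mathbb{C}_{\alpha}}$ (the singular factor is absent, and the $e^{a(s-\alpha)+ib}$ factor trivial, as argued just after \eqref{eq:factor}), giving $\log\zeta_{\chi}=\log B_{\mathcal{Z};\alpha}+\log U$. The Blaschke piece contributes in $L^{2}$ by the Blaschke condition \eqref{eq:blaschke}, which here follows from Lemma~\ref{lem:zeros} combined with the Bohr--Landau bound. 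The outer piece $\log U$ is controlled through the Poisson representation \eqref{eq:Poiss}: on every line $\Real s=\sigma>\alpha$, its mean-square is dominated by the $L^{2}$-norm of the boundary data $\log|U(\alpha+it)|$, which is in turn bounded by $\log|\zeta_{\chi}(\alpha+it)|+|\log|B_{\mathcal{Z};\alpha}(\alpha+it)||$, both summands being handled by \eqref{eq:l2} and the Blaschke bound respectively. With (a) and (b) established, Carlson's theorem yields the pointwise limit, and the uniformity in $\sigma\ge\alpha>1/2$ is inherited from the uniform majorants used throughout.
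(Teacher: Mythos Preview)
Your outline has two genuine gaps that prevent either conclusion from going through as stated.

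For the Bohr--Landau condition, applying Littlewood's lemma directly to $\zeta_\chi$ and bounding $\int_{-T}^{T}\log|\zeta_\chi(\sigma_0+it)|\,dt$ by Jensen's inequality yields only $\int_{\sigma_0}^{2}N(\chi,u,T)\,du=O(T)$, hence $N(\chi,\sigma,T)=O(T)$---not the required $o(T)$. Jensen gives $\tfrac{1}{2T}\int\log|\zeta_\chi|\le\tfrac12\log\bigl(\tfrac{1}{2T}\int|\zeta_\chi|^2\bigr)$, a bounded constant, and no choice of $\sigma_0$ makes that constant vanish. The paper does not apply Littlewood to $\zeta_\chi$ at all: it runs Ingham's mollifier argument, applying Littlewood to $Q_X(s)=1-(\zeta_\chi M_X-1)^2$ with $M_X(s)=\sum_{n<X}\mu(n)\chi(n)n^{-s}$, which via $\log|Q_X|\le|f_X|$, Cauchy--Schwarz, and convexity of moments yields the power saving $N(\chi,\sigma,T)=O(T^{1-\varepsilon})$. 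That stronger bound is then used in the second half.

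For \eqref{eq:secl}, your plan to invoke Carlson's theorem for $F_x=\log\zeta_\chi-\log P_x\zeta_\chi$ fails on two counts. First, Carlson's theorem requires the function to be regular in the half-plane, and $F_x$ has logarithmic singularities and branch cuts at every zero of $\zeta_\chi$; the contour-shifting step in any proof of Carlson's theorem breaks down there, and sparseness of the cuts does not repair it. Second, your hypothesis (b)---a uniform $L^2$-mean bound for $\log\zeta_\chi$ on vertical lines---does not follow from \eqref{eq:l2}. That inequality controls $\int|\zeta_\chi|^2$; while $(\log^+|\zeta_\chi|)^2\le|\zeta_\chi|^2$ handles the positive part, nothing in your argument bounds $\int_{-T}^{T}(\log^-|\zeta_\chi(\alpha+it)|)^2\,dt$. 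The canonical factorization does not help here, since $|B_{\mathcal Z;\alpha}(\alpha+it)|=1$ a.e.\ on the boundary, so $\log^-|U(\alpha+it)|$ \emph{is} $\log^-|\zeta_\chi(\alpha+it)|$, and you are back where you started. The paper avoids both issues by passing to the logarithmic derivative: Lemma~\ref{lem:zeros} and Lemma~\ref{lem:prime} give the local explicit formula
\[
\frac{\zeta_\chi'(s)}{\zeta_\chi(s)}-\frac{(P_x\zeta_\chi)'(s)}{P_x\zeta_\chi(s)}=\sum_{|\gamma-t|\le1}\frac{m_{\mathcal Z}(\rho)}{s-\rho}+O(\log|t|),
\]
and with this and the bound $N(\chi,\sigma,T)=O(T^{1-\varepsilon})$ in hand, the argument of Balazard--Ivi\'c \cite[Thm.~2]{BI} applies verbatim to produce the mean-square limit.
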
}
\begin{proof}
We rely on familiar arguments and will therefore only sketch the proof. We may assume without loss of generality that a possible simple pole of $\zeta_{\chi}(s)$ is not located in the upper half-plane. Our first aim is to show that we will have
\begin{equation} \label{eq:BLest} N(\chi,\sigma,T)=O(T^{1-\varepsilon}) \end{equation}
for $\sigma>1/2$ and $\varepsilon=\varepsilon(\sigma)>0$. This follows by a classical argument of Ingham \cite{In} which may be used in the following way. Set
\[ M_X(s):=\sum_{n<X} \mu(n) \chi(n) n^{-s}, \]
where $\mu(n)$ is the M\"{o}bius function, and set also $f_X(s):=\zeta_{\chi}(s)M_X(s)-1$ and $Q_X(s):=1-f_X(s)^2$, so that the zeros of $\zeta_{\chi}(s)$ are also zeros of $Q_X(s)$. We notice that 
\[ \log |Q_X(s)|\le \log (1+|f_X(s)|^2)\le   |f_X(s)| \]
and estimate then
\[ \int_1^T \left|f_X\left(1/2+\eta+it\right)\right|dt \quad \text{and} \quad \int_1^T \left|f_X\left(1+1/\log T+it\right)\right| dt  \]
in the usual way for a suitable $\eta$, $0<\eta<\sigma-1$, using the Cauchy--Schwarz inequality in either case. Here it is crucial that the mean squares of $M_X(s)$ are uniformly bounded as long as we stay to the right of the $1/2$-line, so that, by our assumption \eqref{eq:secl}, the first of these integrals is $O(T)$, uniformly in $X$; the second integral is $O\big(T^{1/2}(T/X+1)(\log T)^2\big)$ by the bound (17) of \cite{ In}.
Now appealing to the same lemma of Littlewood that was used at the end of Section~\ref{sec:univ} (see also \cite[Sec. 9.9]{T}) and a convexity argument for the moments in a strip \cite{HIP}, we may conclude in a similar way\footnote{The explicit value for $\varepsilon=\varepsilon(\sigma)$, obtained by this argument, is irrelevant in our analysis, and we have made no attempt to optimize our proof sketch.} as in \cite{In}; see also \cite[pp. 230--235]{T}. 

We set $\mathcal{Z}:=Z(\zeta_{\chi}(s))$ and notice that Lemma~\ref{lem:zeros} and Lemma~\ref{lem:prime} give us a replica of a familiar formula in the theory of the Riemann zeta function, namely
\[ \frac{\zeta_{\chi}'(s)}{\zeta_{\chi}(s)} = \sum_{\rho: |\gamma-t|\le 1, \beta\ge \alpha} \frac{m_{\mathcal{Z}}(\rho)}{s-\rho}+ O\left(\log |t|\right) \]
uniformly for $\sigma\ge \alpha$. In fact, since $\log P_x \zeta_\chi (s) $ has an absolutely convergent Dirichlet series in $\sigma>0$, we get more generally
\[ \frac{\zeta_{\chi}'(s)}{\zeta_{\chi}(s)}-\frac{(P_x\zeta_{\chi})'(s)}{P_x\zeta_{\chi}(s)} = \sum_{\rho: |\gamma-t|\le 1, \beta\ge \alpha} \frac{m_{\mathcal{Z}}(\rho)}{s-\rho}+ O\left(\log |t|\right) \]
in the same half-plane $\sigma\ge \alpha$. Using this formula and \eqref{eq:BLest}, we may employ word for word the proof of \cite[Thm. 2]{BI} to conclude that \eqref{eq:secl} holds.
\end{proof}

\subsection{Zero-free regions}\label{sec:zerofree}
Our aim is now to investigate under how general conditions we may establish the classical zero-free region of de la Vall\'{e}e Poussin. It turns out that if there is a zero or pole on the $1$-line, then we can do with the rather weak condition that $\zeta_{\chi}(s)$ be of bounded type in some half-plane, along with a density condition for the zeros that in particular holds for the zeros of $\zeta(s)$. 

We begin by stating the following general theorem.

\vbox{\begin{theorem} \label{th:zero-free}
Suppose that $\sigma(\chi)\le \alpha<1$ and that $\zeta_{\chi}(s)$ is analytic in $\alpha < \Real s < 1$ and of bounded type in $\mathbb{C}_{\alpha}$. Assume, moreover, that the following conditions hold.
\begin{itemize}
\item[(a)] There exists a constant $A$ such that 
\[ 
\left| \frac{B_{\mathcal{Z};\alpha}'(1+1/\log |t|+it)}{B_{\mathcal{Z};\alpha}(1+1/\log |t|+it)} \right| \ll A\log |t| 
\]
when $|t|$ is large enough.
\item[(b)] There exists a positive constant $C$ such that for every positive $\delta$,
\begin{equation} \label{eq:cond} -\Real \frac{\zeta'_{\chi^2}(\sigma+it)}{\zeta_{\chi^2}(\sigma+i t)} \le C \log |t| \ \text{ for } \ \sigma>1+\delta/\log |t| \ \text{ and} \ |t| \text{ large enough.} \end{equation}
\end{itemize}
Then there exists a positive constant $c$ such that $\zeta_{\chi}(s)$ has no zero in the region 
\[ 1-\frac{c}{\log(|\Imag s|+2)} < \Real s <1. \] 
\end{theorem}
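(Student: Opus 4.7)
The proof adapts the classical de la Vall\'ee--Poussin method to Helson zeta functions. The starting point is the elementary trigonometric inequality $3+4\cos\theta+\cos 2\theta\ge 0$. Writing $\chi(p)p^{-i\gamma}=e^{i\theta_p}$ and applying the inequality termwise to the Dirichlet series representations of the logarithmic derivatives of $\zeta$, $\zeta_\chi$, and $\zeta_{\chi^2}$ (each valid in $\sigma>1$) yields
$$-3\frac{\zeta'(\sigma)}{\zeta(\sigma)}-4\,\Re\frac{\zeta'_\chi(\sigma+i\gamma)}{\zeta_\chi(\sigma+i\gamma)}-\Re\frac{\zeta'_{\chi^2}(\sigma+2i\gamma)}{\zeta_{\chi^2}(\sigma+2i\gamma)}\ge 0$$
for every $\sigma>1$ and every real $\gamma$.

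Suppose, towards a contradiction, that $\rho_0=\beta+i\gamma$ is a zero of $\zeta_\chi(s)$ with $|\gamma|$ large and $1-\beta$ arbitrarily small relative to $1/\log|\gamma|$. Fix $\sigma=1+\lambda/\log|\gamma|$ for a small $\lambda>\delta$ to be optimized. The pole of $\zeta$ at $s=1$ gives $-\zeta'(\sigma)/\zeta(\sigma)\le \log|\gamma|/\lambda+O(1)$, and condition (b) supplies $-\Re\zeta'_{\chi^2}/\zeta_{\chi^2}(\sigma+2i\gamma)\le C\log|\gamma|+O(1)$. Substituting these into the fundamental inequality produces the upper bound
$$\Re\frac{\zeta'_\chi(\sigma+i\gamma)}{\zeta_\chi(\sigma+i\gamma)}\le\Bigl(\frac{3}{4\lambda}+\frac{C}{4}\Bigr)\log|\gamma|+O(1).\qquad(\star)$$
A complementary lower bound exhibiting the zero $\rho_0$ is now required. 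By Lemma \ref{lem:prime}, $\zeta'_\chi/\zeta_\chi=B'_{\mathcal{Z};\alpha}/B_{\mathcal{Z};\alpha}+U'/U$; isolating the Blaschke factor associated with $\rho_0$ produces the real contribution $T_0=m_0/(\sigma-\beta)+O(1)$ at $s=\sigma+i\gamma$, where $m_0=m_{\mathcal{Z}}(\rho_0)\ge 1$. On the curve $\sigma=1+1/\log|\gamma|$ condition (a) bounds $|B'_{\mathcal{Z};\alpha}/B_{\mathcal{Z};\alpha}|$ by $A\log|\gamma|$, and the trivial estimate $|\zeta'_\chi/\zeta_\chi|\le-\zeta'(\sigma)/\zeta(\sigma)=\log|\gamma|+O(1)$ (from absolute convergence of the Dirichlet series on that line) forces $|U'/U|=O(\log|\gamma|)$ there as well.

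Once the lower bound $\Re\zeta'_\chi/\zeta_\chi(\sigma+i\gamma)\ge m_0/(\sigma-\beta)-O(\log|\gamma|)$ is in place, combining it with $(\star)$ yields $m_0/(\sigma-\beta)\le K'\log|\gamma|$ with $K'$ depending on $\lambda$, $A$, $C$; choosing $\lambda$ sufficiently small relative to $m_0/K'$ then produces the desired zero-free region $\beta\le 1-c/\log(|\gamma|+2)$ for a suitable $c>0$. The main obstacle is the proof of this matching lower bound. In the classical $L$-function setting the analogous inequality follows from the Hadamard product by the positivity $\Re 1/(s-\rho)\ge 0$ valid whenever $\sigma>\Re\rho$, but here the reflection terms $-1/(s-2\alpha+\overline\rho)$ in the Blaschke factors spoil direct positivity. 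One must instead exploit condition (a) (which forces the sum over all zeros to be uniformly bounded on the reference line) together with the Dirichlet-series-based control of $U'/U$ to rule out excessive cancellation between the singular contribution of $\rho_0$ and the remaining Blaschke sum $T_1$, thereby preserving the $m_0/(\sigma-\beta)$ term in the lower bound.
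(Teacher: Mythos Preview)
Your proposal tracks the paper's strategy closely---Mertens's inequality, the decomposition $\zeta'_\chi/\zeta_\chi=B'_{\mathcal Z;\alpha}/B_{\mathcal Z;\alpha}+U'/U$ from Lemma~\ref{lem:prime}, and the deduction of $|U'/U|=O(\log|\gamma|)$ on the reference line $\sigma=1+1/\log|\gamma|$ from condition~(a) and the trivial Dirichlet-series bound. But there is a genuine gap at the point you yourself flag as ``the main obstacle''.

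Both the upper bound $(\star)$ and the lower bound must be applied at $\sigma=1+\lambda/\log|\gamma|$ with $\lambda$ \emph{strictly smaller} than~$1$; you have secured the estimate for $U'/U$ only at $\lambda=1$ and give no mechanism to transport it leftward. The paper's decisive step, absent from your sketch, is a Hadamard three-lines interpolation applied to the analytic function $U'(s)\big/\big(U(s)(s+1-\alpha)\big)$: on a line close to $\sigma=\alpha$ one invokes the crude bound $U'/U=o(t^2)$ that Lemma~\ref{lem:prime} supplies under the bounded-type hypothesis, and on $\sigma=1+1/\log|t|$ one uses the $O(\log|t|)$ bound just obtained. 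Because the interpolation weight attached to the left line is only of order $1/\log|t|$, the polynomial factor contributes $|t|^{O(1/\log|t|)}=O(1)$, and one concludes $|U'/U|\ll\log|t|$ \emph{uniformly} for $0\le\delta\le 1$. With this in hand the decomposition gives directly
\[
\Real\frac{\zeta'_\chi(\sigma+it)}{\zeta_\chi(\sigma+it)}\ge\frac{1}{\sigma-\beta}-D\log|t|,
\]
and the de la Vall\'ee--Poussin conclusion follows by the standard optimisation in~$\delta$.

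Your closing paragraph misdiagnoses the difficulty: the issue is not to ``rule out excessive cancellation'' in the Blaschke sum by some combination of~(a) and Dirichlet-series control, but to propagate the outer-function bound from $\delta=1$ down to all $\delta\in(0,1]$. The crucial second input for this interpolation is the a~priori polynomial growth of $U'/U$ coming from the bounded-type assumption---this is exactly where that hypothesis enters the proof, and your sketch never calls on it.
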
 }

Here it is important that $C$ does not depend on $\delta$, since otherwise our assumption \eqref{eq:cond} would be obsolete. It is fairly easy to establish that  \eqref{eq:cond} does hold whenever $\zeta_{\chi}(s)$ has a pole or a zero on the line $\sigma=1$. This leads to the following corollary.
\begin{coro} \label{cor:zero-free}
Suppose that $\sigma(\chi)\le \alpha<1$ and that $\zeta_{\chi}(s)$ has a zero or a pole at $s=1$. Assume also that $\zeta_{\chi}(s)$ is analytic in 
$\alpha < \Real s < 1$, of bounded type in $\mathbb{C}_{\alpha}$, and satisfies condition (a) of Theorem~\ref{th:zero-free}.
Then there exists a $c>0$ such that $\zeta_{\chi}(s)$ has no zero in the region
\[ 1-\frac{c}{\log(|\Imag s|+2)} < \Real s <1. \]  
\end{coro}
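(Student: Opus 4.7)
The plan is to deduce the corollary from Theorem~\ref{th:zero-free} by verifying its hypothesis~(b), since the remaining conditions of Theorem~\ref{th:zero-free} are built into the corollary. Concretely, we need to establish
\[
-\Real\frac{\zeta'_{\chi^2}(\sigma+it)}{\zeta_{\chi^2}(\sigma+it)}\le C\log|t|
\]
for every $\delta>0$, whenever $\sigma>1+\delta/\log|t|$ and $|t|$ is sufficiently large (possibly depending on $\delta$), with $C$ a positive constant independent of $\delta$.

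The first step is to exploit the hypothesis via the arguments in the proof of Theorem~\ref{thm:PNT}. Writing $\chi(p)=e^{i\theta_p}$ with $\theta_p\in(-\pi,\pi]$: if $\zeta_\chi$ has a pole at $s=1$, then $\sum_p(1-\chi(p))/p^\sigma=O(1)$ uniformly for $\sigma>1$, whence $\sum_p\theta_p^2/p<\infty$; if $\zeta_\chi$ has a zero at $s=1$, the same argument applied to $\zeta(s)\zeta_\chi(s)$ (analytic at $s=1$) gives $\sum_p(1+\chi(p))/p^\sigma=O(1)$ uniformly for $\sigma>1$, yielding $\sum_{|\theta_p|\ge\pi/2}(\pi-|\theta_p|)^2/p<\infty$ together with $\sum_{|\theta_p|<\pi/2}1/p<\infty$. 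Writing $\chi^2(p)=e^{i\psi_p}$ with $\psi_p\in(-\pi,\pi]$ and using the elementary estimate $|\psi_p|\le 2\min(|\theta_p|,\pi-|\theta_p|)$, both cases produce $\sum_p\psi_p^2/p<\infty$, equivalently $\sum_p|1-\chi^2(p)|^2/p<\infty$. The identity $1-\chi^2(p)=2(1-\chi(p))-(1-\chi(p))^2$ (or $1-\chi^2(p)=2(1+\chi(p))-(1+\chi(p))^2$ in the zero case) combined with this bound then gives $\sum_p(1-\chi^2(p))/p^\sigma=O(1)$ uniformly for $\sigma>1$. Consequently, $\log(\zeta(s)/\zeta_{\chi^2}(s))$ is bounded along the real axis as $s\to 1^+$, so $\zeta_{\chi^2}$ inherits a simple pole at $s=1$ from $\zeta(s)$.

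The remaining task is to bound the difference of logarithmic derivatives
\[
\frac{\zeta'_{\chi^2}(s)}{\zeta_{\chi^2}(s)}-\frac{\zeta'(s)}{\zeta(s)}=-\sum_p\log p\cdot\frac{(\chi^2(p)-1)p^{-s}}{(1-\chi^2(p)p^{-s})(1-p^{-s})}
\]
in the region $\sigma>1+\delta/\log|t|$, $|t|$ large, by $O(\log|t|)$ with constant independent of $\delta$. Combined with the classical estimate $|\zeta'(s)/\zeta(s)|\ll\log|t|$ for $\sigma\ge 1$ and $|t|\ge 2$, this yields~(b). The plan is to apply a carefully weighted Cauchy--Schwarz to the above Dirichlet series, splitting the prime range at $p=|t|^k$ for a suitable $k$ and exploiting the convergent quantity $\sum_p|1-\chi^2(p)|^2/p$, so that the spurious factor $1/(\sigma-1)\le(\log|t|)/\delta$ arising from a naive estimate is absorbed into the main $\log|t|$ term by the freedom to take $|t|$ large depending on $\delta$.

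The main obstacle is this last step: achieving a constant $C$ independent of $\delta$. Any direct Cauchy--Schwarz loses a factor of $1/(\sigma-1)$, and the delicate balancing sketched above---or, alternatively, an analytic continuation argument for $R(s):=\zeta_{\chi^2}(s)/\zeta(s)$ across the $1$-line (suggested by the finiteness of $\sum_p|1-\chi^2(p)|^2/p$ and the bounded behaviour of $\log R$ on the real axis) with subsequent transfer of the classical bound on $\zeta'(s)/\zeta(s)$---is what makes the argument technically delicate but, as hinted by the paper, ultimately ``fairly easy'' to push through.
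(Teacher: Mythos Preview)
Your reduction to condition~(b) of Theorem~\ref{th:zero-free} is correct, and your derivation of $\sum_p \psi_p^2/p<\infty$ (with $\chi^2(p)=e^{i\psi_p}$) in both the pole and zero cases is carefully done---in fact more carefully than the paper, which spells out only the pole case and leaves the zero case implicit. But the proof is not complete: you explicitly flag the final bound as an ``obstacle'' and do not carry it out. A sketch that ends by saying the remaining step is ``technically delicate but ultimately `fairly easy' to push through'' is not a proof.

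Your Cauchy--Schwarz-with-splitting idea does in fact work, and here is the missing computation. Split at $p=|t|$. For the head,
\[
\sum_{p\le |t|}|\psi_p|(\log p)p^{-\sigma}\le\sum_{p\le |t|}|\psi_p|(\log p)p^{-1}
\le\Big(\sum_p\psi_p^2/p\Big)^{1/2}\Big(\sum_{p\le|t|}(\log p)^2/p\Big)^{1/2}\ll\log|t|,
\]
since $\sum_{p\le x}(\log p)^2/p\asymp(\log x)^2$. For the tail, Cauchy--Schwarz gives
\[
\sum_{p>|t|}|\psi_p|(\log p)p^{-\sigma}\le\Big(\sum_{p>|t|}\psi_p^2/p\Big)^{1/2}\Big(\sum_p(\log p)^2 p^{1-2\sigma}\Big)^{1/2}\ll\Big(\sum_{p>|t|}\psi_p^2/p\Big)^{1/2}\cdot\frac{1}{\sigma-1}.
\]
Given $\delta>0$, choose $T_\delta$ so large that $\sum_{p>T_\delta}\psi_p^2/p<\delta^2$; then for $|t|>T_\delta$ and $\sigma>1+\delta/\log|t|$ the tail is $<\delta\cdot(\log|t|)/\delta=\log|t|$. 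Adding the classical bound $|\zeta'/\zeta|\ll\log|t|$ for $\sigma\ge1$, $|t|\ge2$ finishes~(b).

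The paper proceeds differently. Rather than splitting and applying Cauchy--Schwarz directly, it distills the information $\sum_p\theta_p^2/p<\infty$ into the single intermediate estimate
\[
\sum_{p\le x}|\theta_p|(\log p)p^{-1}=o(\log x),
\]
proved by a double-dyadic argument (group primes in ranges $2^{2^k}<p\le 2^{2^{k+1}}$ and note that on all but finitely many blocks the contribution from $|\theta_p|\ge\varepsilon$ is small). From this, Abel summation with the choice $\varepsilon=\delta$ absorbs the $1/\delta$ just as your splitting does. The two routes are equivalent in spirit; the paper's packaging is a bit cleaner because the $o(\log x)$ statement is a reusable lemma independent of $\sigma$ and $t$, whereas your approach welds the splitting point to $|t|$.
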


By Lemma~\ref{lem:prime}, it is clear that \eqref{eq:cond} is satisfied whenever $\zeta_{\chi^2}(s)$ also extends meromorphically to an analytic function of finite order in a strip to the left of the line $\sigma=1$. 
\begin{proof}[Proof of Theorem~\ref{th:zero-free}] 
We recall from Lemma~\ref{lem:prime} that
\begin{equation} \label{eq:Zdec} \frac{\zeta_{\chi}'(s)}{\zeta_{\chi}(s)} =\frac{B_{\mathcal{Z};\alpha}'(s)}{B_{\mathcal{Z};\alpha}(s)} + \frac{U'(s)}{U(s)} ,\end{equation}
where 
\[ \frac{U'(s)}{U(s)}=o(|t|) \]
uniformly in $\sigma\ge \alpha$. Our assumption on $B_{\mathcal{Z};\alpha}(s)$ and the trivial bound
\[ \left|\frac{\zeta_{\chi}'(1+1/\log |t|+it)}{\zeta_{\chi}(1+1/\log |t|+it)} \right| \le \log |t| +O(1) \]
also gives 
\[ \left| \frac{U'(1+1/\log|t|+it)}{U(1+1/\log |t| )} \right| \le D \log |t| \]
for some constant $D$. Now  applying the Hadamard three lines theorem to the function
\[ \frac{U'(s)}{U(s)(s+1-\alpha)} \]
for the three lines $\sigma=\alpha$,  $\sigma=1+1/\log |t|$, and $\sigma=1+\delta/\log |t|$ with $\delta<1$, we get
\begin{equation} \label{eq:Gest} \left| \frac{U'(1+\delta/\log|t|+it)}{U(1+\delta/\log |t| )} \right| 
\ll D' |t|^{\frac{1-\delta}{(\alpha-1)\log |t| -1}} \log |t| \le D'' \log |t|, \end{equation}
uniformly for $0\le \delta\le 1$ and $|t|$ sufficiently large.

Following Mertens, using the familiar inequality $3+4\cos \theta +\cos 2\theta \ge 0$, we find next that
\[ - 3 \frac{\zeta'(\sigma)}{\zeta(\sigma)} -4 \Real \frac{\zeta_{\chi}'(\sigma+it)}{\zeta_{\chi}(\sigma+it)}
-\Real \frac{\zeta_{\chi^2}'(\sigma+i2t)}{\zeta_{\chi^2}(\sigma+i2t)}\ge 0 \]
whenever $\sigma>1$ and $t$ is an arbitrary real number. We choose $\sigma=1+\delta/\log |t|$ for some $\delta$ to be chosen later and use \eqref{eq:cond} to infer that
\begin{equation} \label{eq:M}   4 \Real \frac{\zeta_{\chi}'(\sigma+it)}{\zeta_{\chi}(\sigma+it)} \le \left(\frac{3}{\delta}+B\right)\log |t| +O(1). \end{equation}

Now suppose there is a zero of $\zeta_{\chi}(s)$ at $\rho=\beta+i t$ with $\beta\ge \alpha+(1-\alpha)/2$. Then by \eqref{eq:Zdec} and \eqref{eq:Gest},
\[ \Real \frac{\zeta_{\chi}'(\sigma+it)}{\zeta_{\chi}(\sigma+it)}\ge \frac{1}{\sigma-\beta} - D\log |t|= \frac{1}{1-\beta+\delta/\log |t|} - D \log |t| \]
for some constant $D$.
Inserting this into \eqref{eq:M}, we find that
\[ \frac{4 \log |t|}{(1-\beta)\log|t|+\delta}\le \left(\frac{3}{\delta}+C+4D\right)\log |t| +O(1). \]
We now conclude in the usual way by choosing $\delta$ small enough, say $2\delta \le 1/(C+4D)$, so that $1-\beta\ge c/\log |t|$ for a small constant $c$ depending on $\delta$. 
\end{proof}
Curiously, our deduction of \eqref{eq:Gest} shows that condition (b) holds if $\zeta_{\chi^2}(s)$ meets the same conditions as those imposed on $\zeta_{\chi}(s)$ in Theorem~\ref{th:zero-free}.

\begin{proof}[Proof of Corollary~\ref{cor:zero-free}]
We need to check that \eqref{eq:cond} is satisfied. The crucial point will be to show that
\begin{equation} \label{eq:key}
\sum_{p\le x} |\theta_p| (\log p) p^{-1}=o(\log x), \end{equation} 
where have set $\chi(p)=:e^{i\theta_p}$, $-\pi<\theta\le \pi$. 
To this end, let $\varepsilon$ be an arbitrary
positive number, and set $P(k,\varepsilon):=\left\{p: \ 2^{2^k}<p \le 2^{2^{k+1}}, \ |\theta_p|\ge \varepsilon\right\}$. Then, by \eqref{eq:psum}, 
\[ \sum_{p\in P(k,\varepsilon)} \frac{|\theta_p|}{p} \ge \varepsilon \]
for at most a finite number of positive integers $k$. Hence  for all but finitely many $k$, we have
\[ \sum_{2^{2^k}<p\le 2^{2^{k+1}}} |\theta_p| (\log p) p^{-1} \le \varepsilon \cdot 2^{k+1}\left(1+\sum_{2^{2^k}<p\le 2^{2^{k+1}}}p^{-1}\right)\ll 
\varepsilon \cdot 2^{k+1},  \]
where we in the last step used Mertens's theorem $\sum_{p\le x} p^{-1}=\log\log x+O(1)$.
Summing over all $k$ such that $2^{2^{k+1}}\le x$, we arrive at \eqref{eq:key} since $\varepsilon$ can be chosen arbitrarily small.

It follows from the proof of Theorem~\ref{thm:PNT} that 
\[ \zeta_{\chi^2}(s)=\zeta(s) E(s), \]
where
\[  \frac{E'(s)}{E(s)}=\sum_p (\log p) c_p p^{-s} + O(1), \quad \sigma> 1, \]
and $|c_p|\ll |\theta_p|$. Now \eqref{eq:cond} follows from \eqref{eq:key} and the classical bound
\[ - \frac{\zeta'(s)}{\zeta(s)}\ll \log |t|, \]
which holds uniformly when $\sigma\ge 1$ and $|t|\ge 1$.  
\end{proof}

\section{Further preliminaries for the proofs of the main theorems}\label{sec:prelim}

It is convenient to collect in this section some additional auxiliary results that will be used in the proof of our main theorems. The first result was already mentioned in the introduction, but we state it here in a precise form for future reference.

\begin{lemma}\label{lem:gap}
There exists a positive constant $c$ such that  
\begin{equation} \label{eq:bhp} \pi(x+x^{21/40})-\pi(x) \ge c  x^{21/40}/\log x \end{equation}
for all sufficiently large $x$. On the Riemann hypothesis, 
\begin{equation} \label{eq:dudek} \pi\Big(x+c x^{1/2}\log x\Big)-\pi(x) \ge x^{1/2} \end{equation}
whenever $c>3$ and $x$ is sufficiently large.
\end{lemma}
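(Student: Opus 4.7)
The plan is to invoke two well-known results on primes in short intervals, so the proof amounts essentially to citations.

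For the first, unconditional inequality \eqref{eq:bhp}, there is nothing new to do: this is precisely the theorem of Baker, Harman, and Pintz \cite{BHP}, already recalled as \eqref{eq:primegap} in the introduction. The constant $c$ can be taken to be any positive number smaller than the implicit constant supplied by their proof.

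For the conditional inequality \eqref{eq:dudek}, the plan is to proceed through the Riemann--von Mangoldt explicit formula
$$\psi(x+h) - \psi(x) = h - \sum_{|\gamma| \le T} \frac{(x+h)^{\rho} - x^{\rho}}{\rho} + O\!\left(\frac{x (\log xT)^{2}}{T}\right),$$
valid for $2 \le T \le x$. Under the Riemann hypothesis each zero has the form $\rho = 1/2 + i\gamma$, and the pointwise bound $|(x+h)^{\rho} - x^{\rho}|/|\rho| \le \min\bigl(h/\sqrt{x},\ 2\sqrt{x}/|\gamma|\bigr)$ holds. Splitting the sum over $|\gamma|$ at the breakpoint $|\gamma| \asymp x/h$ and summing by parts using $N(T) \sim (T/2\pi)\log(T/2\pi)$ yields a lower bound for $\psi(x+h) - \psi(x)$ when $h = c\sqrt{x}\log x$. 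Partial summation (or a passage through $\theta(x)$) then converts the $\psi$-estimate into the corresponding lower bound on $\pi(x+h) - \pi(x)$, and careful bookkeeping of the absolute constants produces the explicit threshold $c>3$.

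The only real obstacle, should one wish to write out a fully self-contained argument, lies in this tracking of numerical constants; conceptually nothing new is needed. As both statements are squarely in the literature---with analogous constants computed in the circle of works around Cram\'{e}r's conjecture \cite{Cr2, Gr}---the plan is to simply cite the required results rather than reprove them here.
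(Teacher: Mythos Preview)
Your proposal is correct and takes essentially the same approach as the paper: both parts are handled by citation, with Baker--Harman--Pintz for the unconditional bound. For the conditional bound with the explicit threshold $c>3$, the paper simply cites Dudek \cite{AD} (noting the result goes back to Cram\'er \cite{Cr}), whereas you sketch the underlying explicit-formula argument; either way the content is the same, and pointing to \cite{AD} saves you the bookkeeping you mention.
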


\begin{proof} The unconditional part \eqref{eq:bhp} was established by Baker, Harman, and Pintz \cite[p. 562]{BHP}. The conditional part goes back to Cram\'{e}r \cite{Cr}; 
the present explicit bound \eqref{eq:dudek} was established by Dudek~\cite{AD}. \end{proof}

The next result, while almost evident, will be of basic importance for all the constructions to be made in the subsequent three sections.
\begin{lemma}\label{lem:exist}
Let $\Delta$ be a simply connected domain in $\mathbb{C}$ and $\mathcal{Z}$ a locally finite signed multiset in $\Delta$. If $R(s)$ is meromorphic in $\Delta$ and analytic in $\Delta\setminus \mathcal{Z}$ with simple poles at each of the points $\rho$ of $\mathcal{Z}$ of residue $m_{\mathcal{Z}}(\rho)$, then there exists a meromorphic function $F(s)$ in $\Delta$ with $Z(F(s))=\mathcal{Z}$ and such that $F'(s)/F(s)=R(s)$. The function $F(s)$ is unique up to multiplication by a nonzero constant. 
\end{lemma}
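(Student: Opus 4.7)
\begin{proof*}[Proof plan for Lemma~\ref{lem:exist}.]
The plan is to construct $F(s)$ as the exponential of a line integral of $R(s)$ and then show that the hypothesis on the residues makes this construction unambiguous. Since $\mathcal{Z}$ is locally finite in $\Delta$, the open set $\Delta_0 := \Delta \setminus \mathcal{Z}$ is connected (removing a discrete set from a connected open set in $\mathbb{C}$ leaves it connected). Fix a base point $s_0 \in \Delta_0$ and a nonzero complex number $C_0$, and for $s \in \Delta_0$ set
\[ F(s) := C_0 \exp\left( \int_{\gamma} R(w)\, dw\right), \]
where $\gamma$ is any piecewise smooth path in $\Delta_0$ from $s_0$ to $s$. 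Any other path $\gamma'$ from $s_0$ to $s$ produces a closed loop $\gamma - \gamma'$ in $\Delta_0$, and because $\Delta$ is simply connected, the residue theorem gives
\[ \int_{\gamma} R\, dw - \int_{\gamma'} R\, dw = 2\pi i \sum_{\rho \in \mathcal{Z}} n(\gamma-\gamma',\rho)\, m_{\mathcal{Z}}(\rho), \]
with winding numbers $n(\gamma-\gamma',\rho) \in \mathbb{Z}$. Since each $m_{\mathcal{Z}}(\rho)$ is an integer, the right-hand side lies in $2\pi i \mathbb{Z}$, so the exponential is path-independent and $F(s)$ is well defined and analytic on $\Delta_0$, with $F'(s)/F(s) = R(s)$.

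Next I would verify that $F(s)$ extends meromorphically across each $\rho \in \mathcal{Z}$ with multiplicity $m_{\mathcal{Z}}(\rho)$. Near such a point, write $R(s) = m_{\mathcal{Z}}(\rho)/(s-\rho) + h(s)$ with $h$ analytic in a disc $D$ about $\rho$ contained in $\Delta$. Pick a point $s_1 \in D \cap \Delta_0$ and a fixed path from $s_0$ to $s_1$; for $s \in D \setminus \{\rho\}$ close to $s_1$, continuing along a path that stays in $D \setminus \{\rho\}$ yields, up to a choice of branch,
\[ \int_{s_1}^{s} R(w)\, dw = m_{\mathcal{Z}}(\rho) \log(s-\rho) + H(s) + \text{const}, \]
with $H$ analytic on $D$. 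Exponentiating gives $F(s) = (s-\rho)^{m_{\mathcal{Z}}(\rho)} G(s)$ on $D \setminus \{\rho\}$ with $G(s) = c\, e^{H(s)}$ analytic and nonvanishing on $D$. Hence $F$ extends meromorphically to $\rho$ with a zero (if $m_{\mathcal{Z}}(\rho) > 0$) or pole (if $m_{\mathcal{Z}}(\rho) < 0$) of the prescribed multiplicity, and has no other zeros or poles in $\Delta$; thus $Z(F(s)) = \mathcal{Z}$.

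For uniqueness, if $F_1$ and $F_2$ are two such functions, then $F_1/F_2$ is meromorphic on $\Delta$, has no zeros or poles (the divisors cancel), and satisfies $(F_1/F_2)'/(F_1/F_2) = R - R = 0$. Hence $F_1/F_2$ is a nonzero constant. The main obstacle in this plan is merely the bookkeeping that makes the path-independence work: every detail reduces to the elementary but essential observation that because the residues of $R$ are integers, loops around points of $\mathcal{Z}$ contribute integer multiples of $2\pi i$ to the integral, which become trivial after exponentiation.
\end{proof*}
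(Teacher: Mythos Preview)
Your proof is correct, but it takes a different route from the paper's. The paper first invokes the existence of \emph{some} meromorphic function $V(s)$ on $\Delta$ with $Z(V(s))=\mathcal{Z}$ (a Weierstrass-type fact, valid since $\mathcal{Z}$ is locally finite), observes that $V'(s)/V(s)-R(s)$ is then analytic on the simply connected domain $\Delta$ and hence has an antiderivative $H(s)$, and sets $F(s):=V(s)\exp(-H(s))$. Your approach instead builds $F$ directly as $\exp\bigl(\int R\bigr)$ and uses the integer-residue hypothesis to kill the monodromy after exponentiation, then checks the local order at each $\rho$ by hand. Your argument is more self-contained in that it avoids the Weierstrass black box and makes explicit why the residues being integers matters; the paper's argument is shorter because it outsources the divisor construction and reduces everything to the trivial fact that analytic functions on simply connected domains have primitives. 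Both uniqueness arguments are the same.
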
 
\begin{proof}
Let $V(s)$ be an arbitrary meromorphic function in $\Delta$ with $Z(V(s))=\mathcal{Z}$. Such a function exists because  $\mathcal{Z}$ is assumed to be locally finite in $\Delta$. Then the function $V'(s)/V(s)-R(s)$ will be analytic in $\Delta$ and hence, since $\Delta$ is simply connected, the derivative of some analytic function $H(s)$ in $\Delta$. The function $F(s):=V(s)\exp(-H(s))$ will then have the required property. The uniqueness of $F(s)$ up to a multiplicative constant is obvious since the ratio of two functions with the same logarithmic derivative must be a constant. 
\end{proof}
The preceding lemma will be used in the following way. We will pick a sequence of primes $\mathcal{P}$ and select correspondingly unimodular numbers $\chi(p)$ for $p$ in $\mathcal{P}$. This defines $\chi(n)$ for every $n$ in $\mathbb{N}(\mathcal{P})$ which is the set of positive integers whose prime divisors are in $\mathcal{P}$. We then set 
\[ \zeta_{\chi; \mathcal{P}}(s):=\prod_{p\in \mathcal{P}} \frac{1}{(1-\chi(p) p^{-s})} \quad \text{and} \quad 
\log \zeta_{\chi;\mathcal{P}}(s):=\sum_{n\in \mathbb{N}(\mathcal{P})}\frac{ \chi(n)\Lambda(n)}{\log n} n^{-s} \]
and make the construction such that the logarithimic derivative
\begin{equation} \label{eq:logder} \frac{\zeta'_{\chi;\mathcal{P}}(s)}{\zeta_{\chi;\mathcal{P}}(s)}=-\sum_{n\in \mathbb{N}(\mathcal{P})} \chi(n)\Lambda(n) n^{-s} \end{equation}
extends meromorphically to $\mathbb{C}_{\sigma_0} $ for some $\sigma_0<1$,
with simple poles $\rho$ in $\mathcal{Z}$ and prescribed residues $m_{\mathcal{Z}}(\rho)$. Since
the equality in \eqref{eq:logder} holds for $\Real s>1$, Lemma~\ref{lem:exist} shows that also $\zeta_{\chi;\mathcal{P}}(s)$ has an analytic continuation to $\mathbb{C}_{\sigma_0}$ and
we have then $Z(\zeta_{\chi;\mathcal{P}}(s))\cap \mathbb{C}_{\sigma_0}=\mathcal{Z}$. 

We will also use the probabilistic model mentioned in the introduction to construct $\zeta_{\chi;\mathcal{P}}(s)$ that can be approximated by $P_x \zeta_{\chi;\mathcal{P}}(s)$ in the sense of \eqref{eq:logint} of Theorem~\ref{thm:vor2}.

\vbox{\begin{lemma}\label{lem:almost}
Let $\mathcal{P}$ be an arbitrary sequence of primes and $\chi(p)$, $p$ in $\mathcal{P}$, correspondingly constitute a sequence of independent Steinhaus variables such that
\[ \sum_{p\in \mathcal{P}} p^{-2\sigma} < \infty \] for every
$\sigma>\sigma_0$. Then almost surely
\begin{itemize}
\item[(i)] the Dirichlet series defining $\log \zeta_{\chi;\mathcal{P}}(s)$ 
converges for every $s$ in $\mathbb{C}_{\sigma_0}$,
\item[(ii)] \[ \lim_{T\to \infty} \frac{1}{2T} \int_{-T}^T \left|\log \zeta_{\chi;\mathcal{P}}(\sigma+it)-\log P_x \zeta_{\chi;\mathcal{P}}(\sigma+it) \right|^2 dt = \sum_{p\in \mathcal{P}, p>x} \sum_{j=1}^\infty j^{-2}p^{-2j\sigma} \]
uniformly for $\sigma\ge \alpha$ for every $\alpha>\sigma_0 $.
\end{itemize}
\end{lemma}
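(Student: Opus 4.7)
My plan is to split the logarithm into its prime and prime-power parts,
\[ \log \zeta_{\chi;\mathcal{P}}(s) = S_1(s) + S_2(s), \quad S_1(s):=\sum_{p\in\mathcal P}\chi(p)p^{-s}, \quad S_2(s):=\sum_{p\in\mathcal P}\sum_{j\ge 2}\frac{\chi(p)^j}{j}p^{-js}, \]
and to observe that $S_2$ converges absolutely, uniformly on every compact subset of $\mathbb{C}_{\sigma_0}$, by the standing assumption $\sum_{p\in\mathcal P}p^{-2\sigma}<\infty$ for $\sigma>\sigma_0$. All probabilistic content is thus concentrated in the first-order sum $S_1$, a Steinhaus random Dirichlet series supported on the primes of $\mathcal{P}$.

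For part (i), I would fix a bounded open set $U$ with $\overline U\subset \mathbb{C}_{\sigma_0}$ and view the partial sums $S_{1,N}(s)=\sum_{p\le N,\,p\in\mathcal P}\chi(p)p^{-s}$ as random elements of the Bergman space $A^2(U)$. Independence of the $\chi(p)$ combined with Fubini's theorem yields
\[ \mathbb{E}\bigl[\|S_{1,N}-S_{1,M}\|_{A^2(U)}^2\bigr]=\int_U\sum_{M<p\le N,\,p\in\mathcal P}p^{-2\Real s}\,dm_2(s)\longrightarrow 0 \]
as $M,N\to\infty$, since $\sum_{p>M,\,p\in\mathcal P}p^{-2\sigma}$ decays uniformly in $\sigma$ on the compact set $\{\Real s:s\in\overline U\}$. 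A dyadic Borel--Cantelli argument then makes $\{S_{1,N}\}$ almost surely Cauchy in $A^2(U)$, and the Bergman reproducing-kernel inequality \eqref{eq:bergman} promotes this to uniform convergence on every compact $K\subset U$. Exhausting $\mathbb{C}_{\sigma_0}$ by countably many such $U$ yields (i).

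For part (ii), set $D(s):=\log\zeta_{\chi;\mathcal P}(s)-\log P_x\zeta_{\chi;\mathcal P}(s)=\sum_n c_n n^{-s}$, where $c_n=\chi(n)\Lambda(n)/\log n$ is supported on prime powers $n=p^j$ with $p\in\mathcal P$ and $p>x$. The Steinhaus orthogonality $\mathbb{E}[\chi(p)^j\overline{\chi(q)^k}]=\delta_{(p,j),(q,k)}$ identifies the space average $\mathbb{E}\bigl[|D(\sigma)|^2\bigr]$ as precisely the proposed right-hand side $\sum_{p\in\mathcal P,\,p>x}\sum_{j\ge 1}j^{-2}p^{-2j\sigma}$. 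I would then match the time average with this space average by ergodicity: the vertical translation $\chi\mapsto \chi\cdot t$, $(\chi\cdot t)(p):=\chi(p)p^{-it}$, is a measure-preserving flow on the infinite-dimensional torus $\T^{\infty}$, and ergodicity follows from the $\mathbb Q$-linear independence of $\{\log p:p\in\mathcal P\}$. Birkhoff's pointwise theorem applied to the integrable function $F_\sigma(\chi):=|D_\chi(\sigma)|^2$ then gives the identity in (ii) almost surely, for each fixed $\sigma>\sigma_0$.

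The main obstacle is obtaining this convergence uniformly in $\sigma\ge \alpha$. I would handle it by truncation combined with the Bergman estimate of (i). For the finite Dirichlet polynomial $D_N$ built from the terms with $n\le N$, an elementary computation using $\int_{-T}^T(n/m)^{it}dt=2T\,\delta_{nm}+O(1/|\log(n/m)|)$ gives
\[ \frac{1}{2T}\int_{-T}^T|D_N(\sigma+it)|^2\,dt \longrightarrow \sum_{n\le N}|c_n|^2 n^{-2\sigma} \quad\text{as } T\to\infty, \]
uniformly in $\sigma\in[\alpha,\beta]$, since only finitely many cross-terms appear. The almost sure tail estimate from (i) controls $D-D_N$ uniformly in $\sigma\ge\alpha$, while the right-hand side tail $\sum_{n>N}|c_n|^2 n^{-2\sigma}$ is dominated by $\sum_{p>N,\,p\in\mathcal P}\sum_{j\ge 1}j^{-2}p^{-2j\alpha}\to 0$. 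Combining these three ingredients produces the asserted uniform limit.
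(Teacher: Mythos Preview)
The paper's own proof is a single sentence: it cites \cite[Cor.~4.7]{HLS} and says nothing further. Your proposal therefore takes a genuinely different route, attempting a self-contained argument where the paper defers entirely to an external reference. Your splitting $\log\zeta_{\chi;\mathcal P}=S_1+S_2$, the Bergman-space martingale argument for almost-sure convergence of $S_1$, and the Birkhoff/Kronecker-flow approach to the mean square in (ii) are all sound, and together they reconstruct essentially what lies behind the cited corollary.

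There is, however, a real gap in your uniformity step for (ii). You assert that ``the almost sure tail estimate from (i) controls $D-D_N$ uniformly in $\sigma\ge\alpha$'', but what (i) actually delivers is convergence of the partial sums locally uniformly on compacta of $\mathbb{C}_{\sigma_0}$, via the Bergman inequality \eqref{eq:bergman}. That says nothing about
\[
\limsup_{T\to\infty}\frac{1}{2T}\int_{-T}^T\bigl|(D-D_N)(\sigma+it)\bigr|^2\,dt,
\]
since the segment $[-T,T]$ escapes every compact set. Your three-ingredient $\varepsilon/3$ scheme needs precisely this quantity to be small uniformly in $\sigma\ge\alpha$ for large $N$, and the Bergman estimate does not supply it.

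The gap is repairable. One clean route: once (i) gives almost-sure convergence of the Dirichlet series at a single point $\sigma_0+\varepsilon$, Abel summation yields $|D(\sigma+it)|=O(1+|t|)$ uniformly on $\sigma\ge\alpha$, so $D$ is of finite order and Carlson's classical mean-square theorem applies directly. Alternatively, stay within your ergodic framework and apply Birkhoff to $|(D-D_N)|^2$ itself, for every positive integer $N$; this gives $\lim_T\frac{1}{2T}\int|(D-D_N)|^2\,dt=\sum_{n>N}|c_n|^2n^{-2\sigma}$ almost surely, which is exactly the missing tail bound.
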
}

\begin{proof} The lemma is an immediate consequence of \cite[Cor. 4.7]{HLS}. \end{proof}
 
We come finally to two estimates that will be used to check the mean square condition of Theorem~\ref{thm:vor2}. The first of these is a somewhat specialized version of a well-known inequality of Montgomery and Vaughan.

\vbox{\begin{lemma}\label{lem:secmom}
Let $\mathcal{N}$ be a sequence of positive integers and $\delta: [1,\infty)\mapsto [1,\infty)$  an associated nondecreasing function
such that 
\begin{equation} \label{eq:sepcond} \min_{n'\neq n, n'\in \mathcal{N}} |n-n'|\ge \delta(n) \end{equation}
for every $n$ in $\mathcal{N}$. If $a_n$, $n$ in $\mathcal{N}$, are arbitrary unimodular numbers, then
\[ \int_{-T}^{T}\left|\sum_{n\in \mathcal{N}, x<n\le y}a_n  n^{-\sigma-it}\right|^2 dt =2T \sum_{n\in \mathcal{N}, x<n\le y} n^{-2\sigma} 
+O\left(\int_1^{y+\delta(y)} [\delta(z)]^{-2} z^{1-2\sigma}dz\right), \]
uniformly for $\sigma\ge 1/2$, where the implicit constant on the right-hand side depends only on $\delta(z)$.
\end{lemma}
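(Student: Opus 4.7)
The plan is to apply the Montgomery--Vaughan mean value theorem. Expanding the modulus squared produces the diagonal main term $2T\sum_{n} n^{-2\sigma}$ together with an off-diagonal contribution that the Montgomery--Vaughan inequality (in the standard form
\[
\int_{-T}^{T}\Bigl|\sum_{n} b_n\, n^{-it}\Bigr|^2 dt \;=\; \sum_{n}|b_n|^2\bigl(2T + O(1/\delta_n^{\log})\bigr),
\]
where $\delta_n^{\log}:=\min_{m\neq n,\,m\in\mathcal{N}}|\log(n/m)|$, applied with $b_n:=a_n n^{-\sigma}$ restricted to $x<n\le y$) bounds by $O\bigl(\sum_{n}n^{-2\sigma}/\delta_n^{\log}\bigr)$, uniformly in the unimodular coefficients $a_n$; the implied constant here is absolute.

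The second step is to trade the logarithmic separation $\delta_n^{\log}$ for the arithmetic one $\delta(n)$. Applying the hypothesis at $\max(n,m)$ and using that $\delta$ is nondecreasing, one has $|m-n|\ge \delta(n)$ for every pair of distinct $m,n\in\mathcal{N}$. Combined with the elementary inequalities $\log(1+x)\ge x/2$ for $0\le x\le 1$ and $-\log(1-x)\ge x$ for $0\le x<1$, applied to the nearest neighbour of $n$ on each side, this yields $\delta_n^{\log}\ge \delta(n)/(2n)$. The off-diagonal error therefore becomes $O\bigl(\sum_{n\in\mathcal{N},\, x<n\le y} n^{1-2\sigma}/\delta(n)\bigr)$.

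Finally, I would dominate this sum by the integral appearing in the statement. For each $n\in\mathcal{N}$ the interval $I_n:=(n-\delta(n),n]$ contains no other point of $\mathcal{N}$ (by the separation at $n$), so the intervals $I_n$ with $x<n\le y$ are pairwise disjoint and contained in $[1,y+\delta(y)]$. Because $\sigma\ge 1/2$ and $\delta$ is nondecreasing, $z\mapsto z^{1-2\sigma}/\delta(z)^2$ is nonincreasing, so $n^{1-2\sigma}/\delta(n)^2\le z^{1-2\sigma}/\delta(z)^2$ for every $z\in I_n$; integrating gives
\[
\frac{n^{1-2\sigma}}{\delta(n)} \;=\; \int_{I_n}\frac{n^{1-2\sigma}}{\delta(n)^2}\,dz \;\le\; \int_{I_n}\frac{z^{1-2\sigma}}{\delta(z)^2}\,dz,
\]
and summing over $n\in\mathcal{N}\cap (x,y]$ yields the claimed bound.

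The only nontrivial ingredient is the Montgomery--Vaughan inequality; the logarithmic estimate and the sum-to-integral comparison are routine. The main point to keep track of is that the implied constant depends only on the universal constants from these two inputs, and in particular is independent of $T$, $x$, $y$, $\sigma$, and the coefficients $a_n$, as required by the statement.
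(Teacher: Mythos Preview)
Your proof is correct and takes essentially the same route as the paper: apply the Montgomery--Vaughan inequality, convert the logarithmic separation to the arithmetic one via $\delta_n^{\log}\gg \delta(n)/n$, and then dominate $\sum_{n}n^{1-2\sigma}/\delta(n)$ by the integral using the disjointness of intervals of length $\delta(n)$ around each $n$ together with the monotonicity of $z\mapsto z^{1-2\sigma}/\delta(z)^2$. One small slip: with $I_n=(n-\delta(n),n]$ the union lies in $(-\infty,y]$, not $[1,y+\delta(y)]$; the generous upper limit is harmless, and any spillover below $1$ for the smallest $n$ contributes $O(1)$, which is absorbed since the implied constant is allowed to depend on $\delta$.
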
}

\begin{proof}
If $1/2 \le n'/n \le 2$ and $n\neq n'$ for $n,n'$ in $\mathcal{N}$, then
\[ \left|\log n' - \log n\right|\ge \left(\log 2\right) \frac{|n-n'|}{n} \ge  \left( \log 2\right) \frac{\delta(n)}{n} =c\frac{\delta(n)}{n}.\]
The resulting inequality for $|\log n' - \log n |$, possibly after a slight adjustment of the constant $c$, then holds in general, without the precaution that $1/2 \le n'/n \le 2$. Hence, by an inequality of Montgomery and Vaughan \cite[Cor. 2]{MV},
\begin{equation}\label{eq:insert} \int_{-T}^{T}\left|\sum_{n\in \mathcal{N}, x<n\le y}a_n  n^{-\sigma-it}\right|^2 dt =2T \sum_{n\in \mathcal{N}, x<n\le y} n^{-2\sigma}
+ \Theta \sum_{n\in \mathcal{N}, x<n\le y} [\delta(n)]^{-1} n^{1-2\sigma},\end{equation}
where $  \left|\Theta\right|\le 6\pi/c$.
By the separation condition \eqref{eq:sepcond},
\[ \sum_{n\in \mathcal{N}, x<n\le y} [\delta(n)]^{-1} n^{1-2\sigma}\le \int_{1}^{y+\delta(y)}  [\delta(z)]^{-2} z^{1-2\sigma} dz,\]
where we also used that $z\mapsto [\delta(z)]^{-2} z^{1-2\sigma}$ is a decreasing function. 
Inserting this into \eqref{eq:insert}, we get the desired bound.
\end{proof}
The last lemma of this section is a simple integral variant of the preceding estimate.

\vbox{\begin{lemma}\label{lem:Isec}
Suppose that $|f(y)|\le \varphi(y)$ and $x\ge 1$, where $\varphi:[x,\infty)\to [1,\infty)$ is a nondecreasing function satisfying the doubling condition $\varphi(2y)\le C \varphi(y)$ and  the growth condition
\[ \int_{x}^{\infty} \varphi(y) y^{-\sigma_0-1} dy <\infty. \]
Then the function
\[ F(s):=\int_{x}^\infty f(y) y^{-s-1} dy  \]
will have
\[ \int_{T}^{2T} |F(\sigma+it)|^2 dt \ll (\log T) \int_x^\infty [\varphi(y)]^2 y^{-2\sigma-2} dy +\left(\int_x^{\infty} \varphi(y) y^{-\sigma-1} dy\right)^2 \]
for $\sigma\ge \sigma_0$ and $T\ge 2$, where the implicit constant depends only on the doubling constant $C$.
\end{lemma}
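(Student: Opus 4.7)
Plan. The natural starting point is to expand $|F(\sigma+it)|^2 = F\overline{F}$ and interchange the order of integration, writing
\[
\int_T^{2T}|F(\sigma+it)|^2\,dt \;=\; \int_x^\infty\!\int_x^\infty f(y)\overline{f(y')}(yy')^{-\sigma-1}\,K(y/y')\,dy\,dy',
\]
where the kernel $K(r):=\int_T^{2T} r^{-it}\,dt$ satisfies $|K(r)|\le\min(T,\,2/|\log r|)$. Bounding $|f|\le\varphi$ and splitting the domain according as $|\log(y/y')|$ is larger or smaller than $1$ yields an off-diagonal and a near-diagonal contribution, which I would treat separately.

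For the off-diagonal piece $|\log(y/y')|\ge 1$, the estimate $|K|\le 2$ makes the integrand separate multiplicatively as $\varphi(y)y^{-\sigma-1}\cdot\varphi(y')(y')^{-\sigma-1}$, so this contribution is immediately controlled by $\bigl(\int_x^\infty\varphi(y)y^{-\sigma-1}\,dy\bigr)^2$, matching the second term of the stated bound. For the near-diagonal piece $|\log(y/y')|<1$, I would substitute $y'=yw$ with $w\in[1/e,e]$ and invoke the doubling hypothesis to replace $\varphi(yw)$ by a constant multiple of $\varphi(y)$, after which $y$ and $w$ decouple. The $w$-integral $\int_{|\log w|<1}w^{-\sigma-1}|K(w)|\,dw$ is then a standard calculation: the sub-region $|\log w|<1/T$ contributes $O(1)$ via the trivial bound $|K|\le T$, and the sub-region $1/T\le|\log w|<1$ contributes $O(\log T)$ via $|K|\le 2/|\log w|$ and the logarithmic integral $\int_{1/T}^1 dv/v=\log T$.

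Assembling both contributions gives at this stage
\[
\int_T^{2T}|F(\sigma+it)|^2\,dt \;\ll\; (\log T)\int_x^\infty\varphi(y)^2 y^{-2\sigma-1}\,dy \;+\; \Bigl(\int_x^\infty\varphi(y)y^{-\sigma-1}\,dy\Bigr)^2.
\]
To pass from the Plancherel-type weight $y^{-2\sigma-1}$ to the advertised weight $y^{-2\sigma-2}$, I would split the remaining $y$-integral at $y=T$. On the tail $y>T$, the doubling condition forces geometric decay, so that $\int_T^\infty\varphi^2 y^{-2\sigma-1}\,dy\ll\bigl(\int_T^\infty\varphi y^{-\sigma-1}\,dy\bigr)^2\le\bigl(\int_x^\infty\varphi y^{-\sigma-1}\,dy\bigr)^2$, which gets absorbed into the term already produced by the off-diagonal piece. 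On the body $x\le y\le T$, I would dyadically decompose into scales $y\asymp 2^k x$ for $0\le k\le\log_2(T/x)$, on each of which $y^{-2\sigma-1}\asymp (2^k x)\, y^{-2\sigma-2}$, and then rebalance one logarithmic factor from the $w$-integration against the dyadic scale count to reinterpret the bound as $(\log T)\int\varphi^2 y^{-2\sigma-2}\,dy$.

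The main obstacle is precisely the last bookkeeping step: a naive substitution $y^{-2\sigma-1}=y\cdot y^{-2\sigma-2}$ on the body would cost an extra factor of~$T$ rather than the $\log T$ permitted by the lemma, so the saving must come from a careful coupling of the $\log T$ produced by the $w$-saturation of $|K(w)|\le 2/|\log w|$ with the number of dyadic $y$-scales below $T$; the doubling hypothesis is what keeps the geometric sums on each scale under control when this coupling is carried out.
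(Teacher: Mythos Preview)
Your derivation is essentially identical to the paper's: expand $|F|^2$, apply Fubini, bound the oscillatory kernel by $\min\bigl(T,\,2/|\log(y/y')|\bigr)$, split into an off-diagonal part (where $|\log(y/y')|$ is bounded below) and a near-diagonal part, and invoke doubling on the near-diagonal. You correctly arrive at
\[
\int_T^{2T}|F(\sigma+it)|^2\,dt \;\ll\; (\log T)\int_x^\infty\varphi(y)^2\, y^{-2\sigma-1}\,dy \;+\; \Bigl(\int_x^\infty\varphi(y)\,y^{-\sigma-1}\,dy\Bigr)^2,
\]
and this is precisely the bound the paper's own computation produces. The exponent $-2\sigma-2$ in the printed statement is a misprint for $-2\sigma-1$: the Mellin--Plancherel identity already gives $\int|f|^2 y^{-2\sigma-1}\,dy$ for the full line, and one can verify directly that every subsequent application of the lemma in the paper uses the $y^{-2\sigma-1}$ version (for instance, with $\varphi(y)=Cy^{\nu+\theta-1}$ the paper obtains the factor $x_k^{2\nu+2\theta-2-2\sigma}$, which is $\int_{x_k}^\infty\varphi^2 y^{-2\sigma-1}\,dy$, not $\int_{x_k}^\infty\varphi^2 y^{-2\sigma-2}\,dy$).

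Your ``main obstacle'' is therefore an artifact of the typo, and you should simply stop after the $y^{-2\sigma-1}$ bound. The dyadic ``coupling'' you sketch to pass to $y^{-2\sigma-2}$ cannot work in general: take $\varphi(y)=y^{a}$ with $a<\sigma_0$, for which $\int_x^\infty\varphi^2 y^{-2\sigma-1}\,dy$ and $\int_x^\infty\varphi^2 y^{-2\sigma-2}\,dy$ differ by a factor comparable to $x$, so no rearrangement of logarithmic factors can bridge that gap.
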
}

\begin{proof}
By Fubini's theorem, we may interchange the order of integration and make the following direct computation: 
\begin{align}\nonumber  \int_{T}^{2T} |F(\sigma+it)|^2 dt & = \int_{x}^{\infty} \int_{x}^{\infty} \left(\int_{T}^{2T} \left(\frac{y}{z}\right)^{-it} dt\right) y^{-\sigma-1}z^{-\sigma-1} f(y)\overline{f(z)} dy dz \\
& \le  \int_{x}^{\infty}  \int_{x}^{\infty} \left|\int_{T}^{2T} \left(\frac{y}{z}\right)^{-it} dt\right| y^{-\sigma-1}z^{-\sigma-1} \varphi(y)\varphi(z) dydz . \label{eq:isplit} \end{align}
Now
\[  \int_{T}^{2T} \left(\frac{y}{z}\right)^{-it} dt\ll \begin{cases} T, & |\log(y/z)| \le 1/T, \\
            \frac{1}{|\log (y/z) |}, & |\log(y/z)| > 1/T. \end{cases} \]
 Consequently,
\begin{align*}  \int_{y/2\le z\le 2y} \left|\int_{T}^{2T} \left(\frac{y}{z}\right)^{-it} dt\right| y^{-\sigma-1}z^{-\sigma-1} \varphi(z) dz
& \ll \varphi(y) y^{-\sigma-1} \Big(1+\int_{1/T}^2 \frac{1}{\xi} d\xi \Big) \\
& \le  \varphi(y) y^{-\sigma-1} \big(1+\log T \big). \end{align*}
Since the inner integral in \eqref{eq:isplit} is uniformly bounded for $z\le y/2$ and $z\ge 2y$, we get the desired estimate by plugging the preceding bound into  \eqref{eq:isplit}.
\end{proof}

\vbox{\section{First step of the proof of Theorem~\ref{thm:main2}}\label{sec:constr}

\subsection{Construction of an Euler product whose meromorphic continuation vanishes at $s=\nu$}  \label{sec:euler}
The first step of the proof of Theorem~\ref{thm:main2} consists in picking a subsequence $\mathcal{P}_{\nu}$ of the primes such that the Euler product
\[ \zeta_{\mathcal{P}_{\nu}}(s):=\prod_{p\in \mathcal{P}_{\nu}} \frac{1}{(1-p^{-s})}\]
extends to a meromorphic function in $\mathbb{C}_{\nu/2}$ and $1/\zeta_{\mathcal{P}_{\nu}}(s)$ is analytic in $\mathbb{C}_{\nu/2}$ with only one zero, which has multiplicity $m_{\mathcal{Z}}(\nu)$ and is located at $s=\nu$. The desired Helson zeta function $\zeta_{\chi}(s)$ of Theorem~\ref{thm:main2} will then be of the form
\begin{equation} \label{eq:zetadec}  \zeta_{\chi}(s)=\frac{\zeta(s)}{\zeta_{\mathcal{P}_{\nu}}(s)} \prod_{p\in \mathcal{P}_{\nu}} \frac{1}{(1-\chi(p)p^{-s})}.\end{equation}
We will come back to the question of how to pick the numbers $\chi(p)$ for $p$ in $\mathcal{P}_{\nu}$ in the next section. One may however notice that the basic  ideas to be used to solve that problem appear, in a simpler form, in the construction to be carried out now. }

In the selection of the sequence $\mathcal{P}_{\nu}$, it will be essential to ensure that 
\begin{equation}\label{eq:toprovethm}   \limsup_{T\to \infty}\frac{1}{2T} \int_{-T}^{T} \left|\log \zeta_{\mathcal{P}_{\nu}}({\sigma+i t})-\log P_x \zeta_{\mathcal{P}_{\nu}}({\sigma+i t})\right|^2 dt 
\le C \sum_{p>x} p^{-2\sigma} \end{equation}
for some constant $C$, uniformly for $\sigma\ge \sigma_0$ whenever $\sigma_0>1/2$. It will become clear that the Dirichlet series 
\begin{equation} \label{eq:abs} \sum_{p\in \mathcal{P}_{\nu}} \sum_{\ell=2}^{\infty}   (\log p) p^{-\ell s} \end{equation}
will be absolutely convergent\footnote{As a matter of fact, to prove Theorem~\ref{thm:main2}, we only need the obvious fact that the series in \eqref{eq:abs} converges absolutely in $\mathbb{C}_{1/2}$, but it is of some independent interest to have meromorphic continuation to a larger half-plane.} in $\mathbb{C}_{\nu/2}$, and we may therefore restrict our attention to sums of the form
\[ D(s):=\sum_{p\in \mathcal{P}_{\nu}} p^{-s}. \] We will use the same symbol $D(s)$ for the meromorphic continuation of this function.
Setting $m=m_{\mathcal{Z}}(\nu)$, we require that
\[ D'(s)+\frac{m}{s-\nu} \]
defines an analytic function in $\mathbb{C}_{\nu/2}$. By integration and summation by parts, we may express this difference as
\begin{align}  \label{eq:dprim}  D'(s)+\frac{m}{s-\nu} & =-\sum_{p\in \mathcal{P}_{\nu}}(\log p) p^{-s} + m \int_{1}^{\infty} x^{\nu-s-1} dx \\
& = - s \int_{1}^{\infty}\left(\sum_{p\in \mathcal{P}_{\nu}, p\le x} \log p-m \frac{\left(x^{\nu}-1\right)}{\nu}  \right) x^{-s-1}dx , \nonumber \end{align}
where we initially assume that $\Real s>1$. We see, however, that the right-hand side will extend analytically to $\Real s > \nu/2$ if
\begin{equation}\label{eq:onezz}  \sum_{p\in \mathcal{P}_{\nu}, p\le x} \log p =\frac{m x^{\nu}}{\nu}+O\left(x^{\nu/2+\varepsilon}\right) \end{equation}
for every $\varepsilon>0$. Hence we wish to prove that \eqref{eq:onezz} holds when $\mathcal{P}_{\nu}$ has been suitably chosen. In addition, our aim will be to show that the function $D(s)$, which will then be analytic in the domain $\mathbb{C}_{\nu/2}\setminus (\nu/2, \nu]$, satisfies
\begin{equation} \label{eq:toprimes} \lim_{T\to\infty} \frac{1}{2T}\int_{-T}^T \left|D(\sigma+it)-\sum_{p\in \mathcal{P}_{\nu} , p\le x} p^{-\sigma-it}\right|^2 dt =\sum_{p\in \mathcal{P}_{\nu}, p>x} p^{-2\sigma}, \end{equation}
uniformly for $\sigma\ge \sigma_0$ whenever $\sigma_0>1/2$.

We make an inductive construction to find the sequence $\mathcal{P}_{\nu}$. Let $\theta$ be a parameter such that $\pi(x+x^{\theta})-\pi(x)\gg x^{\theta}/\log x$ holds for large $x$. By Lemma \ref{lem:gap}, we may choose $\theta=21/40$, but we prefer to carry out the construction keeping the numerical value of $\theta$ unspecified.  We define a sequence of real numbers $x_k$ inductively by requiring $x_{k+1}=x_k+x_k^{\theta}$, say, with $x_1:=2$. We will now describe in detail how we choose suitable primes in the interval $[x_k, x_{k+1})$. Our induction hypothesis is that
\begin{equation} \label{eq:pk0} \sum_{p\in \mathcal{P}_{\nu}, p\le x_k} \log p=\frac{m x_k^\nu}{\nu}+O(\log x_k). \end{equation}
We may agree that $2$ is in $\mathcal{P}_{\nu}$, but it does not really matter how we start the construction, since \eqref{eq:pk0} will hold trivially for small $k$ in any case. 
If we now pick a suitable number of primes $p$ in the interval $[x_k, x_{k+1})$ and declare these primes to constitute $\mathcal{P}_{\nu}\cap [x_k,x_{k+1})$, then we will be able to 
have also
\[ \sum_{p\in \mathcal{P}_{\nu}, p\le x_{k+1}} \log p=\frac{m x_{k+1}^\nu}{\nu}+O(\log x_{k+1}). \] 
In fact, we need $O\big(x_k^{\nu+\theta-1}/\log x_k\big)$ primes to achieve this, and by our assumption on $\theta$, there are $\gg x_k^{\theta}/\log x_k$ primes in this interval, whence this is feasible. By induction, we get in this way \eqref{eq:pk0} for all positive integers $k$.
It is then plain that for all positive numbers $x$, we will have
\begin{equation} \label{eq:basic0} \sum_{p\in \mathcal{P}_{\nu}, p\le x} \log p=\frac{m x^\nu}{\nu}+O(x^{\nu+\theta-1}), \end{equation}
independently of how the primes in any interval $[x_k, x_{k+1})$ are chosen. Since $1-\theta\ge 19/40 \ge \nu/2$, we have the desired bound for the remainder term in \eqref{eq:onezz}. It is also plain that
\[ \sum_{p\in \mathcal{P}_{\nu}} p^{-2\sigma} \ll \sum_{k=1}^{\infty} x_k^{-2\sigma} x_k^{\nu+\theta-1}/\log x_k \ll \sum_{j=1}^{\infty} 2^{(\nu-2\sigma)j} j^{-1} < \infty \]
whenever $\sigma>\nu/2$. Hence the Dirichlet series in \eqref{eq:abs} converges absolutely when $\Real s >\nu/2$, as anticipated above.

We are now left with a local problem, namely how to choose appropriately $O\big(x_k^{\nu+\theta-1}/\log x_k\big)$ primes in $[x_k, x_{k+1})$. Our goal is to do this so that the distance between consecutive primes is as large as possible. 

Since we will now consider just one interval, we set  $x_k=x$. The average distance between our primes in $[x, x+x^{\theta})$ will be of order $x^{1-\nu}\log x$; we wish to have a separation between our primes which is essentially of this magnitude. This we can achieve in the following way. We enumerate the available primes in $[x,x+x^{\theta})$: We arrange them by ascending magnitude and call them $p_j$ with $j=1,..., K$ and $K\gg x^\theta/\log x$. Choosing $j=\ell [c x^{1-\nu}]$ for a suitable constant $c$ and $1\le \ell \le K/[c x^{1-\nu}]$, we get a sufficient number of primes, ensuring  also that the distance between two consecutive primes is $\gg x^{1-\nu}$. We may express this important separation property of $\mathcal{P}_{\nu}$ as follows: There exists a positive constant $c$ such that
\begin{equation} \label{eq:sep} \inf_{p'\in \mathcal{P}_{\nu}, p'\neq p} |p-p'|\ge c p^{1-\nu} \end{equation}
for every $p$ in $\mathcal{P}_{\nu}$.

\subsection{Computation of  mean square distances}\label{sec:meansix}  Since  \eqref{eq:onezz} holds, the function $D(s)$ defined in the preceding subsection is analytic in $\mathbb{C}_{1/2}\setminus (1/2,\nu]$. We now prove that it in fact satisfies \eqref{eq:toprimes}. 

While \eqref{eq:dprim} gives us the desired meromorphic continuation of $D(s)$, it is not convenient for computing mean squares. We may rewrite \eqref{eq:dprim} to get it into a more manageable form, by applying summation and integration by parts only to the ``tails'' of respectively the Dirichlet series and the integral 
on the right-hand side of \eqref{eq:dprim}. This yields the decomposition
\begin{align} \label{eq:dps} D(s)=& \sum_{p\in \mathcal{P}_{\nu}, p\le x_k} p^{-s} + \int_{\sigma}^\infty \frac{mx_k^{\nu-u-it}}{(u+it-\nu)} du \\
 & +\int_{\sigma}^{\infty} (u+it) \int_{x_k}^{\infty}\left(\sum_{p\in \mathcal{P}_{\nu}, p\le x} \log p-\frac{m x^{\nu}}{\nu}  \right) x^{-u-1-it}dx du + O\left(x_k^{-\sigma}\log x_k \right), \nonumber \end{align}
where we used \eqref{eq:pk0} to get the remainder term. The exact cut-off value $x_k$ is chosen only for convenience, giving us one term less to account for. From \eqref{eq:dps} we then get
\begin{align} \label{eq:dss} D(s)-\sum_{p\in \mathcal{P}_{\nu}, p\le x} p^{-s}=&  \sum_{p\in \mathcal{P}_{\nu}, x<p\le x_k} p^{-s} + \int_{\sigma}^{\infty}\frac{m x_k^{\nu-u-it}}{(u+it-\nu)} du  \\
 & + \int_{\sigma}^{\infty} (u+it) \int_{x_k}^{\infty}\left(\sum_{p\in \mathcal{P}_{\nu}, p\le x} \log p-\frac{m x^{\nu}}{\nu}  \right) x^{-u-it-1}dxdu + O\left(x_k^{-\sigma} \right), \nonumber \end{align}
assuming that $x<x_k$ and $t\neq 0$. Our goal is now to compute the contribution from each of the terms in this composition and to choose a value for $x_k$ that yields an optimal balance between the respective bounds. For the first term on the right-hand side of \eqref{eq:dss}, we have
\begin{equation} \label{eq:first} \int_{-T}^T  \left|\sum_{p\in \mathcal{P}_{\nu}, x<p\le x_k} p^{-\sigma-it}\right|^2 = 2T\sum_{p\in \mathcal{P}_{\nu}, x<p\le x_k} p^{-2\sigma}
+ O\left(1+ x_k^{2\nu-2\sigma} \right) \end{equation}
by the separation property \eqref{eq:sep} of $\mathcal{P}_{\nu}$ along with Lemma~\ref{lem:secmom}. For the second term, we have trivially 
\begin{equation} \label{eq:second}  \int_{-T}^T \left|\int_{\sigma}^{\infty}\frac{m x_k^{\nu-u-it}}{(u+it-\nu)} du \right|^2 dt \ll 1+ x_k^{2\nu-2\sigma}/\log x_k. \end{equation}
Finally,  by the Cauchy--Schwarz inequality,
\begin{align} \nonumber
\Bigg|\int_{\sigma}^{\infty} (u+it) \int_{x_k}^{\infty}& \left(\sum_{p\in \mathcal{P}_{\nu}, p\le x} \log p-\frac{m x^{\nu}}{\nu}  \right) x^{-u-1-it}dx du\Bigg|^2 \\
& \le \frac{1}{\sigma} \int_{\sigma}^{\infty} |u+it|^2 u^2 \Bigg|\int_{x_k}^{\infty} \left(\sum_{p\in \mathcal{P}_{\nu}, p\le x} \log p-\frac{m x^{\nu}}{\nu}  \right) x^{-u-1-it}dx\Bigg|^2 du. \label{eq:intdy} \end{align}
By \eqref{eq:basic0} and Lemma~\ref{lem:Isec} applied with $\varphi(y)=C y^{\nu+\theta-1}$, we find that
\[  \int_{\Theta\le |t|\le 2\Theta} \left|\int_{x_k}^{\infty}\left(\sum_{p\in \mathcal{P}_{\nu}, p\le x} \log p-\frac{m x^{\nu}}{\nu}  \right) x^{-\sigma-1-it}dx\right|^2 dt
\ll  x_k^{2\nu+2\theta-2-2\sigma} \log \Theta \]
whenever $\Theta\ge 2$. Using Fubini's theorem and splitting the integral dyadically when integrating \eqref{eq:intdy} in the range $2\le |t| \le T$, this yields  
\begin{equation} \label{eq:third} \int_{-T}^T \Bigg|\int_{\sigma}^{\infty} (u+it) \int_{x_k}^{\infty}\left(\sum_{p\in \mathcal{P}_{\nu}, p\le x} \log p-\frac{m x^{\nu}}{\nu}  \right) x^{-u-1-it}dx du\Bigg|^2dt\ll 
x_k^{2\nu+2\theta-2-2\sigma} T^2 \log T.\end{equation}
Choosing $k$ such that $x_k\sim T^{1/(2\nu-1)}$ and taking into account \eqref{eq:first}, \eqref{eq:second}, and \eqref{eq:third}, we therefore get
\begin{align} \nonumber \frac{1}{2T} \left|\int_{-T}^{T}D(\sigma+it)-\sum_{p\in \mathcal{P}_{\nu}, p\le x} p^{-\sigma-it}\right|^2 dt &
= \sum_{p\in \mathcal{P}_{\nu}, x<p\le x_k} p^{-2\sigma}+ O\left(T^{-1}\right) \\
& +O\left(T^{-\frac{2\sigma-1}{2\nu-1}}\right)+ O\left(T^{\frac{4\nu+2\theta-3-2\sigma}{2\nu-1}} \log T\right), \label{eq:fromsix} \end{align}
which is uniform for $\sigma\ge \sigma_0$. We get the convergence for $\sigma>1/2$ required by Theorem~\ref{thm:vor2} if $\nu\le (2-\theta)/2$. This means that we should have
$\nu\le 59/80$ when $\theta=21/40$, in accordance with the condition on $\nu$ in Theorem~\ref{thm:main2}.

\section{Proof of Theorem~\ref{thm:main}, Theorem~\ref{thm:main3}, and Theorem~\ref{thm:main2}}\label{sec:main}

\subsection{Main lines of the proof}\label{sec:mainlines} The proof of each of the three theorems mentioned in the title of this section is essentially the same, and we therefore give a joint presentation of the proof. By assumption, the Bohr--Landau condition holds trivially in all three cases, so we only need to verify that condition \eqref{eq:logint} of Theorem~\ref{thm:vor2} holds. Our plan is first to present the common main lines of the proof and then, when we come to the technical details of each of the three cases, to treat them in parallel. In each step of the argument, we present first the case of Theorem~\ref{thm:main} and Theorem~\ref{thm:main2} and then we show how to modify it to cover the more manageable case of Theorem~\ref{thm:main3} as well.

In either case, we will construct a sequence of primes $\mathcal{P}$ and a sequence of unimodular numbers $\chi(p)$ so that the function
\[ \zeta_{\chi;\mathcal{P}}(s):=\prod_{p\in \mathcal{P}} \frac{1}{(1-\chi(p) p^{-s})} \]
extends meromorphically to $\mathbb{C}_{1/2}$ and has $Z\big(\zeta_{\chi;\mathcal{P}}(s)\big)\cap \mathbb{C}_{1/2}=\mathcal{Z}$. In the case of Theorem~\ref{thm:main2}, it is crucial that we choose $\mathcal{P}$ as a subsequence of the sequence $\mathcal{P}_{\nu}$ of the preceding section, so that our Helson zeta function takes the form 
\[ \zeta_{\chi}(s) :=\frac{\zeta(s)}{\zeta_{\mathcal{P}_{\nu}}(s)} \zeta_{\chi;\mathcal{P}}(s)\zeta_{\chi;\mathcal{P}_{\nu}\setminus \mathcal{P}}(s). \]
Here the values $\chi(p)$ for $p$ in $\mathcal{P}_{\nu}\setminus \mathcal{P}$ are chosen using our random model so that
\begin{equation} \label{eq:random} \lim_{T\to \infty} \frac{1}{2T}\int_{-T}^T\Big|\log \zeta_{\chi;\mathcal{P}_{\nu}\setminus \mathcal{P}}(s)-\log P_x\zeta_{\chi;\mathcal{P}_{\nu}\setminus \mathcal{P}}(s)
\Bigg|^2 dt = \sum_{p\in \mathbb{N}(\mathcal{P}_{\nu}\setminus \mathcal{P}), p>x} \sum_{\ell=1}^{\infty} \ell^{-2} p^{-2\ell\sigma},\end{equation}
uniformly for $\sigma\ge \sigma_0$ for every $\sigma_0>1/2$ and $x>1$. 
In the other two cases, instead of $\mathcal{P}_\nu\setminus \mathcal{P}$, we use the whole sequence of primes not belonging to $\mathcal{P}$ which we denote by $\mathcal{P'}$. Then our Helson zeta function is defined as
\[ \zeta_{\chi}(s) :=\zeta_{\chi;\mathcal{P}}(s)\zeta_{\chi;\mathcal{P'}}(s), \]
where $\zeta_{\chi;\mathcal{P'}}(s)$ is chosen by means of the same random model that was used in the case of Theorem~\ref{thm:main2}.

Most of the proof will consist in first constructing and then analyzing the function
\begin{equation} \label{eq:Ddef} D(s):=\sum_{p\in \mathcal{P}} \chi(p) p^{-s}, \end{equation} 
which is the ``essential part''  part of the Dirichlet series of $\log \zeta_{\chi;\mathcal{P}}$ in the precise sense that the remaining part $\log \zeta_{\chi;\mathcal{P}}(s) -D(s)$ is absolutely convergent in $\mathbb{C}_{1/2}$. Writing respectively
\begin{align} 
\log  \zeta_{\chi}(s)& =D(s) +\log \zeta(s)-\log \zeta_{\mathcal{P}_{\nu}}(s)+\log\zeta_{\chi;\mathcal{P}}(s)-D(s)+\log \zeta_{\chi;\mathcal{P}_{\nu}\setminus \mathcal{P}}(s) \label{eq:logone} \\ 
\log \zeta_{\chi}(s)& =D(s)+\log\zeta_{\chi;\mathcal{P}}(s)-D(s)+\log \zeta_{\chi;\mathcal{P'}}(s) \label{eq:logtwo}, \end{align}
we then check that
\begin{itemize}
\item[(A)] each of the terms $\log \zeta(s)$, $\log \zeta_{\mathcal{P}_{\nu}}(s)$, $\log\zeta_{\chi;\mathcal{P}}(s)-D(s)$, $\log \zeta_{\chi;\mathcal{P}_{\nu}\setminus \mathcal{P}}(s)$ in \eqref{eq:logone} can be approximated by the partial sum of its Dirichlet series in the mean square sense;
\item[(B)] each of the terms $\log\zeta_{\chi;\mathcal{P}}(s)-D(s)$, $\log \zeta_{\chi;\mathcal{P}'}(s)$ in \eqref{eq:logtwo} can be approximated by the partial sum of its Dirichlet series in the mean square sense.
\end{itemize}
In the case of (A), we use then respectively a well known property of $\log \zeta(s)$, \eqref{eq:fromsix} of the preceding section, the fact that $\log\zeta_{\chi;\mathcal{P}}(s)-D(s)$ has an absolutely convergent Dirichlet series, and \eqref{eq:random}. In the case of (B), we just use that $\log\zeta_{\chi;\mathcal{P}}(s)-D(s)$ has an absolutely convergent Dirichlet series and the identity for $\log \zeta_{\chi;\mathcal{P}'}(s)$ corresponding to its counterpart to \eqref{eq:random}.

We conclude that since $\log\zeta_{\chi;\mathcal{P}}(s)-D(s)$ is analytic in $\mathbb{C}_{1/2}$ independently of how $\chi$ is chosen, what remains to establish our three theorems is to pick $\mathcal{P}$ and corresponding values $\chi(p)$ for $p$ in $\mathcal{P}$ such that first, by Lemma~\ref{lem:exist}, $D'(s)$ is meromorphic in $\mathbb{C}_{1/2}$, with simple poles at each of the points $\rho$ of $\mathcal{Z}$ of residue $m_{\mathcal{Z}}(\rho)$, and
second, that
\begin{equation} \label{eq:smcond}  \lim_{T\to \infty} \frac{1}{2T} \int_{-T}^T \Bigg| D(\sigma+it)-\sum_{p\le x} \chi(p) p^{-\sigma-it}\Bigg|^2 dt =
\sum_{p>x} p^{-2\sigma}, \end{equation}
uniformly for $\sigma\ge \sigma_0$ whenever $\sigma_0>1/2$ and $x>1$, so that Theorem~\ref{thm:vor2} applies.
The next four subsections will accomplish these two tasks given $\mathcal{Z}$ satisfying either of the conditions of our three theorems.

We notice that from this point on, the work to be done is word for word the same for the two cases of Theorem~\ref{thm:main} and Theorem~\ref{thm:main2}. The only significant difference occurred above when we applied our random model respectively to $\mathcal{P}'$ and $\mathcal{P}_{\nu}\setminus \mathcal{P}$. This explains why we, with some extra work, could have considered meromorphic extensions of $\zeta_{\chi}(s)$ to $\mathbb{C}_{\nu/2}$ in the context of Theorem~\ref{thm:main2}, in contrast to what our methods permit us to do in relation to Theorem~\ref{thm:main}.

\subsection{Selection of $\mathcal{P}$ and $\chi(p)$ for $p$ in $\mathcal{P}$} It will be convenient to agree that $\alpha=1$ in the case of Theorem~\ref{thm:main3}. We begin by assuming that $\mathcal{Z}$ is a locally finite signed multiset\footnote{At this stage, it is convenient to demand a little less from the location of $\mathcal{Z}$ than what is required in Theorem~\ref{thm:main} and Theorem~\ref{thm:main3}.} in $\mathbb{C}_{\alpha/2}$ satisfying the conditions of either of our three theorems. 
We set
\begin{equation} \label{eq:R} R(s):=\sum_{\rho} m_{\mathcal{Z}}(\rho) \frac{y_{\rho}^{\rho-s}}{s-\rho}, \end{equation}
where $y_{\rho}$ are positive numbers to be chosen such that the series on the right-hand side converges absolutely in $\mathbb{C}_{\alpha/2}\setminus \mathcal{Z}$. Whatever choice we make for $y_{\rho}$, the virtue of $R(s)$ is that it has a simple pole at each point $\rho$ of $\mathcal{Z}$ of residue $m_{\mathcal{Z}}(\rho)$. We now make the specific choice 
\begin{equation} \label{eq:defyr} y_{\rho}:=(|\gamma|+1)^{1/(\alpha+\beta-1)} \end{equation}
for the proofs of Theorem~\ref{thm:main} and Theorem~\ref{thm:main2}, and set simply
\begin{equation} \label{eq:defyr2} y_{\rho}:=|\gamma| \end{equation}
when proving Theorem~\ref{thm:main3}, where as always $\rho=\beta+i\gamma$. These particular values for $y_{\rho}$ may at this point seem somewhat arbitrary, but it should become clear during the course of the proof that they are carefully tuned with the various requirements to be taken into account.

Our density condition on $\mathcal{Z}$ ensures the required convergence of the series in \eqref{eq:R}. Indeed,
in the first case, for $x\ge 2|t|$, we have
\begin{align*} \sum_{\rho: x\le |\gamma| < 2x} |m_{\mathcal{Z}}(\rho)| \frac{y_{\rho}^{\beta-\sigma}}{|\sigma+it-\rho |}
& \le 2\sum_{\rho: |\gamma|\le 2x}  |m_{\mathcal{Z}}(\rho)| x^{\frac{\beta-\sigma}{\alpha+\beta-1}-1} \\
&  = 2 x^{\frac{1-\sigma-\alpha}{\alpha-1/2}}N_{\mathcal{Z}}(1/2,2x) \\
& \quad +2 \int_{1/2}^{\alpha} (\log x) \frac{d}{du}\left(\frac{1-\sigma-\alpha}{\alpha+u-1}\right) x^{\frac{1-\sigma-\alpha}{\alpha+u-1}}N_{\mathcal{Z}}(u,2x) du \\
& \ll  x^{\frac{1/2-\sigma}{\alpha-1/2}} \log x,\end{align*}
where we in the last step used condition (c) of either of the two theorems. Hence summing over all dyadic intervals of the form $2^k x \le |\gamma| < 2^{k+1} x$, we get
\[ \sum_{\rho: |\gamma| \ge 2x} |m_{\mathcal{Z}}(\rho)| \frac{y_{\rho}^{\beta-\sigma}}{|\sigma+it-\rho |} 
\ll (\log x) x^{\frac{1/2-\sigma}{\alpha-1/2}}. \]
In the second case, on the conditions of Theorem~\ref{thm:main3}, we get\[ \sum_{\rho: x\le |\gamma| < 2x} |m_{\mathcal{Z}}(\rho)| \frac{y_{\rho}^{\beta-\sigma}}{|\sigma+it-\rho |}
\ll x^{-\sigma} (\log x)^{-1} , \]
again assuming $x\ge 2|t|$. We then conclude as in the first case by making another summation over dyadic intervals.

We next introduce the function
\[ q(x):=\sum_{\rho: y_{\rho}\le x} m_{\mathcal{Z}}(\rho) x^{\rho-1}, \]
which will play a crucial role in our construction. In the case of Theorem~\ref{thm:main} and Theorem~\ref{thm:main2}, we find that
\[ |q(x)|  \le \sum_{y_{\rho}\le x}|m_{\mathcal{Z}}(\rho)| x^{\beta-1}  \le  \sum_{\rho:|\gamma| \le x^{\alpha+\beta-1}} |m_{\mathcal{Z}}(\rho)| x^{\beta-1},
 \]
using the definition of $y_{\rho}$. Now, since
 \[ x^{\beta}= (1-1/e)^{-1} (\log x) \int_{\beta-1/\log x}^{\beta} x^{u} du, \]
 we infer from this that 
 \[ |q(x)| \le 
 (1-1/e)^{-1}(\log x) \int_{1/2-1/\log x}^{\alpha}  x^{u-1} N_{\mathcal{Z}}\left(u, e x^{\alpha+u-1}\right) du.  \]
Using finally condition (c) of either of the two theorems, we see that
\begin{equation}\label{eq:qbound} q(x)\ll x^{\alpha-1} \log x. \end{equation}
The corresponding computation on conditions (a) and (b) of Theorem~\ref{thm:main3} takes the simpler form
\begin{align} |q(x)| & \le \sum_{\rho: y_{\rho}\le x}|m_{\mathcal{Z}}(\rho)| x^{\beta-1}  =  \sum_{\rho: |\gamma| \le x} |m_{\mathcal{Z}}(\rho)| x^{\beta-1}  \nonumber \\
& \le (\log x)^{-\lambda} \sum_{\rho: |\gamma| \le x} |m_{\mathcal{Z}}(\rho)|  = (\log x)^{-\lambda} N_{\mathcal{Z}}(1/2, x) =O\left((\log x)^{-1-(\lambda-\kappa)} \right).
\label{eq:qbound2} \end{align}

We now make an inductive construction to find the sequence $\mathcal{P}$ suitable for the proof of either Theorem~\ref{thm:main} or Theorem~\ref{thm:main2}, by essentially the same argument that was used in Subsection~\ref{sec:euler}. It will in either case be convenient to pick $\mathcal{P}$ as a subsequence of $\mathcal{P}_{\nu}$ that was constructed in the preceding section.
We let $\theta$ and $x_k$ have the same meaning as before. Our induction hypothesis is now that
\begin{equation} \label{eq:pk1} \sum_{p\in \mathcal{P}, p\le x_k} \chi(p) \log p=\int_1^{x_k} q(y) dy +O(\log x_k). \end{equation}
If we now pick a suitable number of primes $p$ from $\mathcal{P}_{\nu}$ in the interval $[x_k, x_{k+1})$ and declare these primes to constitute $\mathcal{P}\cap [x_k,x_{k+1})$, then we will be able to 
have also
\[ \sum_{p\in \mathcal{P}, p\le x_{k+1}} \chi(p)  \log p=\int_1^{x_{k+1}} q(y) dy+O(\log x_{k+1}). \] 
In view of \eqref{eq:qbound}, we need $O\big(x_k^{\alpha+\theta-1}\big)$ primes to achieve this. By construction, there are $\gg x^{\nu+\theta-1}/\log x$ primes from $\mathcal{P}_{\nu}$ in $[x_{k},x_{k+1})$, so this is feasible. Hence, if
\[ \int_{1}^{x_{k+1}} q(y) dy - \sum_{p\in \mathcal{P}, p\le x_k} \chi(p) \log p = R_k e^{i c_k} , \quad 0<R_k \ll x^{\alpha+\theta-1},\]
we may therefore choose $\sim R_k /\log x_k$ primes $p$ in $[x_k, x_{k+1})$ to get precisely \eqref{eq:pk1}; we set $\chi(p)=e^{i c_k}$ for these primes and declare them to be $\mathcal{P}\cap [x_k,x_{k+1})$. Arguing as in the preceding section and using the separation property \eqref{eq:sep} of $\mathcal{P}_{\nu}$, we can do this in such a way that
\begin{equation} \label{eq:sep1} \inf_{p'\in \mathcal{P}, p'\neq p} |p-p'|\ge c p^{1-\alpha}/\log p \end{equation}
for every $p$ in $\mathcal{P}$.

By induction, we get in this way \eqref{eq:pk1} for all positive integers $k$.
It is then plain that for all positive numbers $x$, we will have
\begin{equation} \label{eq:basic} \sum_{p\in \mathcal{P}, p\le x}\chi(p) \log p=\int_1^x q(y) dy +O\left(x^{\alpha+\theta-1}\log x \right), \end{equation}
independently of how the primes in any interval $[x_k, x_{k+1})$ are chosen. 

On the assumptions of Theorem~\ref{thm:main3}, we act in the same way, but require instead $x_{k+1}=x_k+C(\log x_k)^{2+\varepsilon}$ for $0<\varepsilon<\lambda-\kappa$ and a constant $C$ which is so large that $\pi(x_{k+1})-\pi(x_k)\ge 1$ holds for all $k$. This is feasible because, by \eqref{eq:qbound2}, we need to pick at most one prime in each interval $[x_k, x_{k+1})$. The same construction then gives us a sequence of primes $\mathcal{P}$ and corresponding values $\chi(p)$ such that
\begin{equation} \label{eq:basic2} \sum_{p\in \mathcal{P}, p\le x}\chi(p)  \log p=\int_1^x q(y) dy +O\big(\log x \big). \end{equation}
 
\subsection{Construction and representation of $D(s)$} Our next goal is to analyze the  function $D(s)$ defined in \eqref{eq:Ddef} 
in a similar way as was done for its counterpart in the preceding section.
We first consider the meromorphic continuation of $D(s)$. We assume that $\Real s >1$ and use again partial summation to obtain
\[ D'(s)-R(s)=s\int_{1}^{\infty} \Bigg(\sum_{p\in \mathcal{P}, p\le x} \chi(p) \log p-\int_1^x q(y)dy \Bigg)  y^{-s-1} dy. \]
Here the right-hand side extends to an analytic function in either $\mathbb{C}_{\alpha+\theta-1}$ or $\mathbb{C}_0$, depending on whether 
\eqref{eq:basic} or \eqref{eq:basic2} holds. In the first case, we have $\alpha+\theta-1\le 59/80+21/40-1<1/2$. This means that in either case, the function 
$D'(s)-R(s)$ is analytic in $\mathbb{C}_{1/2}$. Since $R(s)$ has simple poles at each of the points $\rho$ of $\mathcal{Z}$ of residue $m_{\mathcal{Z}}(\rho)$, we conclude that $D'(s)$ has a meromorphic continuation of the desired type. As before, we continue to use the symbols $D(s)$ and $D'(s)$ for respectively the analytic and meromorphic continuation as well.
  
We are now left with the problem of verifying \eqref{eq:smcond}. To this end, we begin by making a suitable decomposition of $D(s)$, analogous to that found in \eqref{eq:dps}. By summation  by parts, we obtain first
\begin{equation} \label{eq:dprimes}  D'(s)=-\sum_{p\in \mathcal{P}, p\le x} \chi(p)(\log p) p^{-s}-s\int_{x}^{\infty} \Bigg(\sum_{p\in \mathcal{P}, x<p\le y} \chi(p)\log p\Bigg) (\log y)^{-1} y^{-s-1} dy. \end{equation}
We will again choose $x=x_k$ for some $k$, to save us one term to estimate.
With this choice, the integral on the right-hand side of \eqref{eq:dprimes} can be written as
\begin{align*} s \int_{x_k}^{\infty} \Bigg(&\sum_{p\in \mathcal{P},x_k<p\le y}  \chi(p)\log p\Bigg) y^{-s-1} dy =  s \int_{x_k}^{\infty} \Bigg(\sum_{p\in \mathcal{P},p\le y} \chi(p)\log p-\int_{1}^y q(z) dz\Bigg)y^{-s-1} dy \\ & +s \int_{x_k}^{\infty} \int_{x_k}^y q(z) dz y^{-s-1} dy+s\int_{x_k}^\infty \Bigg(\int_1^{x_k} q(z)dz-\sum_{p\in \mathcal{P}, p\le x_k} \chi(p)\log p
\Bigg)y^{-s-1} dy 
  \\
& = s \int_{x_k}^{\infty} \left(\sum_{p\in \mathcal{P}, p\le y} \chi(p)\log p-\int_{1}^y q(z) dz\right)y^{-s-1}dy + \sum_{y_{\rho}\le x_k}m_{\mathcal{Z}}(\rho) \frac{x_k^{\rho-s}}{s-\rho} \\
& \quad + \sum_{y_{\rho}> x_k} m_{\mathcal{Z}}(\rho) \frac{y_{\rho}^{\rho-s}}{s-\rho}+O\left(x_k^{-\sigma}\log x_k\right).\end{align*}
Hence combining \eqref{eq:dprimes} with the latter expression and assuming $x_k>x$ for some fixed $x$,  we obtain the decomposition
\begin{equation} \label{eq:Ddec} D(s)-\sum_{p\in \mathcal{P}, p\le x} \chi(p) (\log p) p^{-s} =F_k(s)+I_k(s)+S_k(s)+\Sigma_k(s)+O(x_k^{-\sigma}), \end{equation}
where 
\begin{align*} F_k(s)& :=\sum_{p\in \mathcal{P}, x<p\le x_k} \chi(p) p^{-s} \\
I_k(s)& := \int_s^{\infty+it} w \int_{x_k}^{\infty} \Bigg(\sum_{p\in \mathcal{P}, p\le y} \chi(p)\log p-\int_{1}^y q(z) dz\Bigg)y^{-w-1}dy dw \\
S_k(s)&:= \int_{s}^{\infty+it} \left(\sum_{y_{\rho}\le x_k}m_{\mathcal{Z}}(\rho) \frac{x_k^{\rho-w}}{w-\rho}\right) dw \\
\Sigma_k(s) &:= \int_{s}^{\infty+it} \left(\sum_{y_{\rho}> x_k} m_{\mathcal{Z}}(\rho) \frac{y_{\rho}^{\rho-s}}{w-\rho}\right) dw, \end{align*}
where as usual $t= \Imag s$.
It is natural to group the computations into two subsections, one dealing with $F_k(s)$ and $I_k(s)$ and the other with the two sums $S_k(s)$ and $\Sigma_k(s)$ over the zeros and the poles. 

\subsection{Computation of the mean squares of $F_k(s)$ and $I_k(s)$}  We will now show that the mean squares of $F_k(s)$ are in accordance with \eqref{eq:logint} and that the mean squares of $I_k(s)$ are uniformly $O(T^{1-\varepsilon})$ in every half-plane $\sigma\ge \sigma_0>1/2$, in both cases on the condition that $x_k$ is suitably chosen.

\subsubsection{The case of  Theorem~\ref{thm:main} and Theorem~\ref{thm:main2}} By the separation condition \eqref{eq:sep1} and Lemma~\ref{lem:secmom}, we have
\begin{equation} \label{eq:Fk} \int_{-T}^{T} \big| F_k(\sigma+it)\big|^2 dt =2T \sum_{p\in \mathcal{P}, x<p\le x_k} p^{-2\sigma} 
+ O\left(x_k^{2\alpha-2\sigma}\log x_k \right) , \end{equation}
uniformly for $\sigma\ge 1/2$. 
If we  require that $x_k\ll T^{1/(2\alpha-1)}$, then the remainder term in \eqref{eq:Fk} is uniformly $O(T^{1-\varepsilon})$ when $\sigma\ge \sigma_0>1/2$. 

Now applying the Cauchy--Schwarz inequality as in Subsection~\ref{sec:meansix}, we get
\[ |I_k(s)|^2 \le \frac{1}{\sigma} \int_{\sigma}^{\infty} |u+it|^2 u^2 \Bigg|\int_{x_k}^{\infty} \Big(\sum_{p\in \mathcal{P}, p\le y} \chi(p) \log p-\int_{1}^y q(z) dz\Bigg)y^{-u-1-it}dy\Bigg|^2 du. \]
Hence, by Fubini's theorem and Lemma~\ref{lem:Isec}, applied with $\varphi(y)=cy^{\alpha+\theta-1}\log y$, we obtain 
\begin{align*} \int_{\Theta\le |t|\le 2\Theta} |I_k(\sigma+it)|^2 dt\  & \ll \ \frac{1}{\sigma} \int_{\sigma}^{\infty} |u+i\Theta|^2 u^2 x_k^{2(\alpha+\theta-1)-2u} \big((\log \Theta) \log u + 
(\log u)^2\big) du \\
&  \ll \Theta^2 \big(\log \Theta+\log x_k\big) x_k^{2(\alpha+\theta-1)-2\sigma} . \end{align*}
Adding this estimate dyadically, we find that
\[ \int_{-T}^T |I_k(\sigma+it)|^2 dt \ll T^2 \big(\log T+\log x_k\big) x_k^{2(\alpha+\theta-1)-2\sigma}, \]
which is of admissible size if $x_k \ge T^{1/(3-2\alpha-2\theta)}$. Combining our two restrictions on $\alpha$, we see that
\[ 3 - 2\alpha -2\theta \ge 2\alpha-1 ,\]
which yields $\alpha\le (2-\theta)/2$, in accordance with condition (a) of Theorem~\ref{thm:main} by the specific choice $\theta=21/40$, made in accordance with Lemma~\ref{lem:gap}. We could have chosen $k$ such $x_k$ would depend on $\sigma$, but this is of no special use, and we will in the sequel assume that $x_k\sim T^{1/(2\alpha-1)}$.

\subsubsection{The case of Theorem~\ref{thm:main3}} By Lemma~\ref{lem:secmom}, we get trivially  
\[ \int_{-T}^{T} \left| F_k(\sigma+it) \right|^2 dt =2T \sum_{p\in \mathcal{P}, x< p\le x_k} p^{-2\sigma} 
+ O\left(x_k^{2-2\sigma} \right), \]
and hence the remainder term is of appropriate size if $x_k\ll T$. Furthermore, arguing as in the preceding case, using \eqref{eq:basic2} and Lemma~\ref{lem:Isec}, now with $\varphi(y)=c \log y$, we get
\[   \int_{-T}^{T} |I_k(\sigma+it)|^2 dt\  \ll \  T^2 \big(\log T +\log x_k\big) x_k^{-2\sigma}.  \]
Choosing $k$ such that $x_k\sim T$, we see that the integral is $O(T^{2-2\sigma_0}\log T)$, which gives the desired behavior.
 
\subsection{Bounds for the sums over the zeros and the poles}\label{sec:zerospoles}
We will start from the trivial bounds
\begin{align}
\label{eq:Sk} \left|S_k(s)\right|&\le \int_{\sigma}^{\infty+it} \left(\sum_{\rho: y_{\rho} \le x_k}|m_{\mathcal{Z}}(\rho)| \frac{x_k^{\beta-u}}{|u+it-\rho|}\right) du \\
\label{eq:Sik} \left|\Sigma_k(s)\right| &\le \int_{\sigma}^{\infty+it} \left(\sum_{\rho: y_{\rho}>x_k} |m_{\mathcal{Z}}(\rho)| \frac{y_{\rho}^{\beta-u}}{|u+it-\rho|}\right) du, \end{align}
where we assume that $t$ is not an ordinate of any of the points $\rho=\beta+i\gamma$. 
What remains to establish the mean square condition \eqref{eq:logint}, is to show that 
\begin{equation} \label{eq:msbl} \int_{-T}^{T} |S_k(\sigma+it)|^2dt \ll T^{1-\varepsilon} \quad \text{and} \quad \int_{-T}^{T} |\Sigma_k(\sigma+it)|^2dt \ll T^{1-\varepsilon} \end{equation}
for some $\varepsilon>0$, uniformly when $\sigma\ge \sigma_0>1/2$. 
\subsubsection{The case of Theorem~\ref{thm:main} and Theorem~\ref{thm:main2}} To begin with, we observe that the inequality $y_{\rho} \le x_k$ is equivalent to
$1+|\gamma|\le T^{(\alpha+\beta-1)/(2\alpha-1)}$, and hence
\[ |S_k(s)|\le S_{k,T}(s)  := \int_{\sigma}^{\infty} \sum_{\rho:  |\gamma| \le T} |m_{\mathcal{Z}}(\rho)| \frac{x_k^{\beta-u}}{|u+it-\rho|}du. \]
We wish to restrict the summation on the right-hand side of  \eqref{eq:Sik} similarly, to the range 
$ |\gamma| \le 2T$. To this end, we find that
\begin{align} \label{eq:split2} \sum_{\rho}|m_{\mathcal{Z}}(\rho)| \frac{y_\rho^{\beta-u}}{|u+it-\rho|}
\ll & \sum_{\rho: |\gamma| \le 2T} |m_{\mathcal{Z}}(\rho)| \frac{y_{\rho}^{\beta-u}}{|u+it-\rho|} \\ 
& +  \sum_{\rho:  |\gamma|>2T}
|m_{\mathcal{Z}}(\rho)| \frac{y_{\rho} ^{\beta-u}}{|\rho|}. \nonumber \end{align}
The contribution from the latter term to the right-hand side of  \eqref{eq:Sik} is
\[ \ll \int_{\sigma}^\infty \left(\sum_{\rho: |\gamma|>2T}
|m_{\mathcal{Z}}(\rho)| \frac{y_{\rho} ^{\beta-u}}{(T+|\rho|)}\right)du \ll \sum_{\rho}
|m_{\mathcal{Z}}(\rho)| \frac{y_{\rho} ^{\beta-\sigma}}{|\rho|}. \]
We consider dyadic blocks and use that
\[ \sum_{\rho: X/2<|\gamma| \le X }|m_{\mathcal{Z}}(\rho)| y_{\rho}^{\beta-\sigma}  \ll 
\sum_{\rho: |\gamma| \le X }|m_{\mathcal{Z}}(\rho)| X^{\frac{\beta-\sigma}{\alpha+\beta-1}} \]
which holds because $y_\rho=(1+|\gamma|)^{1/(\alpha+\beta-1)}$.  Here the right-hand side can be written as 
\begin{align*} \sum_{\rho: |\gamma| \le X }|m_{\mathcal{Z}}(\rho)| X^{\frac{\beta-\sigma}{\alpha+\beta-1}}  
&  = X^{1/2-\sigma} N_{\mathcal{Z}}(1/2,X) +\log X \int_{1/2}^{\alpha} \frac{d}{du}\Bigg(\frac{u-\sigma}{\alpha+u-1}\Bigg)
X^{\frac{u-\sigma}{\alpha+u-1}} N(u,X) du. \end{align*}
Hence, using again condition (c) of either of the two theorems, we infer from this that
\begin{equation} \label{eq:extra2} \sum_{\rho: X/2<|\gamma| \le X }
|m_{\mathcal{Z}}(\rho)| y_{\rho}^{\beta-\sigma} \ll (\log X) X^{\frac{\alpha-\sigma}{\alpha-1/2}} .\end{equation}
Summing dyadically and using \eqref{eq:extra2}, we get a total contribution which is $O\big((\log T) T^{-\frac{\sigma-1/2}{\alpha-1/2}}\big)$.  
Hence, returning to \eqref{eq:split2}, we get
\[ \Sigma_k(s)
\ll  \int_{\sigma}^{\infty}  \sum_{\rho:  |\gamma| \le 2T} |m_{\mathcal{Z}}(\rho)| \frac{x_k^{\beta-u}}{|u+it-\rho|}du + O\left((\log T)T^{-\frac{\sigma-1/2}{\alpha-1/2}}\right), \]
and we are left with the problem of computing the integrals for 
\[ \Sigma_{k,T}(s)  :=  \int_{\sigma}^{\infty} \sum_{\rho:  |\gamma| \le 2T} |m_{\mathcal{Z}}(\rho)| \frac{y_{\rho}^{\beta-u}}{|u+it-\rho|}du .\]

We now turn to the integrals for the two functions $S_{k,T}(s)$ and $\Sigma_{k,T}(s)$. 
By Fubini's theorem and a direct computation, we get
\begin{align}\nonumber  \int_{-T}^{T} |S_{k,T}(\sigma+it)|^2 dt &  \ \le \int_{\sigma}^{\infty} \int_{\sigma}^{\infty} \sum_{\rho, \rho': T/2 \le \gamma , \gamma' \le 3T} 
\int_{-T}^{T} \frac{|m_{\mathcal{Z}}(\rho)| |m_{\mathcal{Z}}(\rho')| x_k^{\beta-u} x_k^{\beta'-u'}}{|u+it-\rho| |u'+it-\rho'|} dt du du'  \\
& \ll  \sum_{\rho, \rho':   |\gamma| , |\gamma'| \le T} 
 \frac{|m_{\mathcal{Z}}(\rho)| |m_{\mathcal{Z}}(\rho')| x_k^{\beta-\sigma}x_k^{\beta'-\sigma}}{(|\rho-\rho'|+1)} \log(e+|\rho-\rho'|). \label{eq:firstth}\end{align}
We apply the Cauchy--Schwarz inequality and the assumption that $|m_{\mathcal{Z}}(\rho)|\ll T^{\varepsilon}$ to infer from this that
\[ \int_{-T}^{T} |S_{k,T}(\sigma+it)|^2 dt \ll T^{\varepsilon} \sum_{\rho, \rho':  |\gamma|, |\gamma'| \le T}|m_{\mathcal{Z}}(\rho)| \frac{x_k^{2(\beta-\sigma)}}{\left(|\rho-\rho'|+1\right)}, \]
where the term $\log(e+|\rho-\rho'|)$ has been absorbed in the factor $T^{\varepsilon}$. Summing over $\rho'$ and using condition (b) of Theorem~\ref{thm:main}, we then get
\[  \int_{-T}^{T} |S_{k,T}(\sigma+it|^2 dt \ll T^{2\varepsilon} \sum_{\rho: |\gamma| \le T} |m_{\mathcal{Z}}(\rho)| x_k^{2(\beta-\sigma)}.  \]
Observing that
\[ \sum_{\rho: |\gamma| \le T} |m_{\mathcal{Z}}(\rho)| x_k^{2(\beta-\sigma)}= N_{\mathcal{Z}}(1/2,T) x_k^{2(1/2-\sigma)}
+(\log x_k)\int_{1/2}^{\alpha} x_k^{2(u-\sigma)} N_{\mathcal{Z}}(u,T) du \]  
and using that $x_k\sim T^{1/(2\alpha-1)}$ and condition (c) of either of the two theorems, we get
\[ \int_{-T}^{T} |S_{k,T}(\sigma+it|^2 dt \ll T^{2\varepsilon}\cdot T^{1-\frac{2\sigma-1}{2\alpha-1}}. \]
Since $\varepsilon$ can be chosen arbitrarily small, the desired estimate for $S_k(s)$ in \eqref{eq:msbl} follows. 

Finally, acting in exactly the same way, we get
\[ \int_{-T}^{T} |\Sigma_{k,T}(\sigma+it)|^2 dt \ll T^{2\varepsilon}  \sum_{\rho: |\gamma| \le 2T} |m_{\mathcal{Z}}(\rho)| y_{\rho}^{2(\beta-\sigma)} .\]
We now use that $y_{\rho}\sim |\gamma|^{1/(\alpha+\beta-1)}$, whence  
\[ \sum_{\rho: X<|\gamma|  \le 2X} |m_{\mathcal{Z}}(\rho)| y_{\rho}^{2(\beta-\sigma)} 
\ll N_{\mathcal{Z}}(1/2,2X) X^{\frac{2(1/2-\sigma)}{\alpha-1/2}}
+(\log x_k)\int_{1/2}^{\alpha} X^{\frac{2(u-\sigma)}{\alpha+u-1}} N_{\mathcal{Z}}(u,2X) du. \] 
Summing dyadically, we  conclude that
\[ \int_{-T}^{T} |\Sigma_{k,T}(\sigma+it)|^2 dt \ll T^{2\varepsilon}\cdot  T^{1-\frac{2\sigma-1}{2\alpha-1}},\]
which implies that the desired estimate for $\Sigma_k(s)$ in \eqref{eq:msbl} holds as well.

\subsubsection{The case of Theorem~\ref{thm:main3}}
The required estimates are again straightforward in this case. From \eqref{eq:Sk}, we get that
\[ S_k(s)\ll \int_{\sigma}^\infty \left(\sum_{\rho:  |\gamma| \le 2T} |m_{\mathcal{Z}}(\rho)| \frac{x_k^{\beta-u}}{|u+it-\rho|}\right) du, \]
since $y_{\rho}=|\gamma|$ and $x_k\sim T$. It follows that
\[ \int_{-T}^{T} |S_{k}(\sigma+it)|^2 dt    \ll  \sum_{\rho, \rho':  |\gamma| , |\gamma'|\le 2T} 
 \frac{|m_{\mathcal{Z}}(\rho)| |m_{\mathcal{Z}}(\rho')| T^{2-2\sigma}(\log T)^{-2\lambda}}{(|\rho-\rho'|+1)} \log(e+|\rho-\rho'|). \]    
Summing trivially and using condition (b) of Theorem~\ref{thm:main3}, we then get
 \[ \int_{-T}^{T} |S_{k}(\sigma+it)|^2 dt \ll  T^{2-2\sigma}, \]
 which yields the desired bound in \eqref{eq:msbl}.
 
 Finally, to deal with $\Sigma_k(s)$, we find that
 \[ \Sigma_k(s)
\ll  \int_{\sigma}^{\infty} \left( \sum_{\rho:  |\gamma| \le 2T} |m_{\mathcal{Z}}(\rho)| \frac{y_{\rho}^{\beta-u}}{|u+it-\rho|}\right)du + O\left((\log T)T^{-\sigma}\right) \]
when $|t|\le T$.
Since $y_{\rho}\le 2T$ in the sum on the right-hand side, the computation we just made covers this case as well.

 \section{Proof of Theorem~\ref{thm:nonuniversal}} \label{sec:proofany}
\subsection{General scheme of the construction of $\zeta_{\chi}(s)$ with $Z^+(\zeta_{\chi}(s))=\mathcal{Z}^+$} Throughout this section, we assume that $\mathcal{Z}^+$ is a locally finite multiset in $\mathbb{C}_{1/2}$ that is confined to either of the two strips  
$1/2<\Real s < 1$ or $1/2<\Real s \le 39/40$, depending on whether we take the truth of the Riemann hypothesis for granted or not. 

We begin by making the following dyadic partition of the strip $1/2 < \Real s < 1$. We set 
\begin{align*} U_j &:= \left\{s=\sigma+it: \ 1/2+2^{-(j+1)}\le \sigma < 1/2+2^{-j}, \ |t|\le 2^j \right\} \\
                      V_j &:= \left\{s=\sigma+it: \ 1/2+2^{-(j+1)}\le \sigma < 1, \ 2^j<|t|\le 2^{j+1} \right\} 
                        \end{align*}
and observe that
\[ \bigcup_{j=1}^{\infty} \left(U_j \cup V_j \right)= \big\{s: \ 1/2 < \Real s < 1\big\}. \]
We will, during the course of the proof,  assign to each $\rho$ in $\mathcal{Z}^+\cap \big(\bigcup_{j=1}^\infty V_j\big)$ a nearby point $\rho'$. It will be a tacit assumption that the points $\rho'$ are distinct and not contained in $\mathcal{Z}^+$. We assign the multiplicity $-m_{\mathcal{Z}^+}(\rho)$ to $\rho'$ and let $\mathcal{Z}$ denote the signed multiset obtained by adjoining the extraneous points $\rho'$ to $\mathcal{Z}^+$.
With each integer $j$, we associate two numbers $u_j, v_j \ge 1$ so that the following function is well defined in $\mathbb{C}_{1/2}$: 
\begin{equation} \label{eq:RR} R(s):=
\sum_{j=1}^\infty \Bigg(\sum_{\rho\in U_j} m_{\mathcal{Z}^+}(\rho) \frac{u_{j}^{\rho-s}}{s-\rho} 
+\sum_{\rho\in V_j}  m_{\mathcal{Z}^+}(\rho)  \Bigg(\frac{v_{j}^{\rho-s}}{s-\rho}-\frac{v_{j}^{\rho'-s}}{s-\rho'}
 \Bigg)\Bigg). \end{equation}
We will make the specific choice $v_j:=2^j$. It is clear that if we choose $u_j$ sufficiently large and $\rho'$ sufficiently close to $\rho$, then the series over $j$ will converge absolutely when $s$ is in $\mathbb{C}_{1/2} \setminus \mathcal{Z}$ and uniformly on compact subsets of this domain. Consequently, \eqref{eq:RR} will then define a meromorphic function in $\mathbb{C}_{1/2}$ with simple poles at the points $\rho$ of $\mathcal{Z}^{+}$ as well as at the ``extraneous'' points $\rho'$. Moreover, the residue of the pole at $\rho$ is $m_{\mathcal{Z}^+}(\rho)$, and the residue of the pole at $\rho'$ is $-m_{\mathcal{Z}^+}(\rho)$. Hence if we can find a sequence of primes $\mathcal{P}$ and unimodular numbers $\chi(p)$ so that
\begin{equation} \label{eq:Rp}  R(s)+\sum_{p\in \mathcal{P}} \chi(p) (\log p) p^{-s} \end{equation}
defines an analytic function in $\mathbb{C}_{1/2}$, then 
$Z^+\big(\zeta_{\chi;\mathcal{P}}(s)\big)\cap \mathbb{C}_{1/2}=\mathcal{Z}^+$ by the same reasoning as in Subsection~\ref{sec:mainlines}.
Having accomplished this, we use again Lemma~\ref{lem:almost} to construct $\chi(p)$ for $p$ not in $\mathcal{P}$. To finish the proof of Theorem~\ref{thm:nonuniversal}, it will therefore be enough to make
the function in \eqref{eq:Rp} analytic in $\mathbb{C}_{1/2}$.
 
We begin by setting $u_1:=1$ and define inductively 
\begin{equation} \label{eq:uj} u_j:= \exp\Big(j+\sum_{\rho\in U_j} |m_{\mathcal{Z}^+}(\rho)|  \Big) \end{equation}
for $j>1$.  For a fixed $s=\sigma+it$ in $\mathbb{C}_{1/2} \setminus \mathcal{Z}$, we choose $m$ such that $2^{-m+1}<\sigma-1/2$ and set
$\varepsilon:=\sigma-1/2 -2^{-m}$. Then
\[ \sum_{j=m}^\infty \left|\sum_{\rho\in U_j} |m_{\mathcal{Z}^+}(\rho)| \frac{u_{j}^{\rho-s}}{s-\rho}\right| 
\ll \sum_{j=m}^\infty  u_j^{-\varepsilon}
\log u_j <\infty \] 
by the exponential decay of $u_j$. We require next
\[ \delta_j\le  \big[\sum_{\rho\in V_j} m_{\mathcal{Z}^+}(\rho)\big]^{-1}. \] 
Hence if $m$ so large that $2^{m} e \ge |s| $, 
then we also get
\[\sum_{j=m}^\infty \sum_{\rho\in V_j} \Bigg| m_{\mathcal{Z}^+}(\rho)  \Bigg(\frac{v_{j}^{\rho-s}}{s-\rho}-\frac{v_{j}^{\rho'-s}}{s-\rho'}
\Bigg) \Bigg| \ll \sum_{j=m}^{\infty}  j \delta_j 2^{-\sigma j} \sum_{\rho \in V_j}  \le \sum_{j=m}^{\infty} j 2^{-\sigma j} <\infty. \]
We conclude that our requirements for $u_j$ and $\delta_j$ entail the desired uniform convergence on compact subsets of
$\mathbb{C}_{1/2}\setminus \mathcal{Z}$.

We now set 
\begin{equation}\label{eq:qdef}  q(x):= \sum_{j: u_j \le x} \sum_{\rho\in U_j} m_{\mathcal{Z}}(\rho) x^{\rho-1}+ \sum_{j: 2^j\le x} \sum_{\rho\in V_j} m_{\mathcal{Z}}(\rho) \big(x^{\rho-1}-x^{\rho'-1}\big). \end{equation}
We may then write
\begin{equation} R(s)=\int_{1}^\infty q(x) x^{-s}  dx   \label{eq:RHS} \end{equation}
when $\Re s>1$.
 Hence our task is to pick $\mathcal{P}$ and $\chi(p)$ such that 
       \[ R(s)+\sum_{p\in \mathcal{P}} \chi(p) (\log p) p^{-s} \]
       has an analytic continuation to $\mathbb{C}_{1/2}$. 
       We use once again partial integration and summation to deduce that     
\begin{equation} \label{eq:QP} R(s)+\sum_{p\in \mathcal{P}} \chi(p)(\log p) p^{-s}=  s \int_1^\infty \left(\int_1^x q(y) dy -
       \sum_{p\in \mathcal{P}, p\le x} \chi(p) \log p \right) x^{-s-1} dx. \end{equation}
As in the preceding section, we will make an inductive construction to ensure that 
\begin{equation} \label{eq:qp} \int_1^x q(y) dy + \sum_{p\in \mathcal{P}, p\le x} \chi(p) \log p \ll x^{1/2+\varepsilon} \end{equation}
for every $\varepsilon>0$,
which will yield absolute convergence in $\mathbb{C}_{1/2}$ of the integral on the right-hand side of \eqref{eq:QP}.

The construction differs slightly depending on whether we assume the Riemann hypothesis to be true or not. In either case, it will be convenient to require
\[ \rho'=\beta'+i\gamma, \quad \beta'<\beta, \]
which implies that
\begin{equation}\label{eq:require}
\big| x^{\rho}-x^{\rho'} \big| \le x^{\beta} \big|\beta-\beta'\big| \log x.
\end{equation}
\subsection{Construction of $\mathcal{P}$ and $\chi(p)$ in the unconditional case} We start from \eqref{eq:qdef}.  Recalling that $\Real \rho \le 39/40$ for all $\rho$ and \eqref{eq:require} holds, we find that
\begin{equation} \label{eq:qsum}  |q(x) |\le x^{-1/40} \sum_{j: u_j \le x}  \log u_j + x^{-1/40} 
(\log x) \sum_{j: 2^j \le x} \delta_j \sum_{\rho\in V_j} m_{\mathcal{Z}^+}(\rho)  \ll x^{-1/40} (\log x)^2,\end{equation}
since each of the sums in \eqref{eq:qsum} contains at most $(\log x)/\log 2$ terms. 

We now set $x_1:=2$ and define inductively  
$x_{k+1}:=x_k+x_k^{21/40}$ for $k\ge 1$. We will now describe in detail how to choose suitable primes in the interval $[x_k, x_{k+1})$. We make the induction hypothesis that
\begin{equation} \label{eq:pk} \sum_{p\in \mathcal{P}, p\le x_k} \chi(p) \log p=\int_1^{x_k} q(y) dy +O(\log x_k) \end{equation}
holds for all $k$. This is trivially true for $k=1$ independently of how we start the construction. We may for example choose to include $p=2$ in $\mathcal{P}$ and set $\chi(2)=1$. Our aim is now to pick a suitable number of primes $p$ in the interval $[x_k, x_{k+1})$ and declare these primes to constitute $\mathcal{P}\cap [x_k,x_{k+1})$, to 
have also
\begin{equation} \label{eq:step} \sum_{p\in \mathcal{P}, p\le x_{k+1}} \chi(p)  \log p=\int_1^{x_{k+1}} q(y) dy+O(\log x_{k+1}). \end{equation} 
By \eqref{eq:qsum} and the definition of the sequence $x_k$, we have
\[ \int_{x_k}^{x_{k+1}} q(y) dy \ll x_k^{21/40-1/40}(\log x)^2= x_k^{1/2}(\log x)^2. \]
Hence we need $O\big(x_k^{1/2} \log x_k\big)$ primes to achieve \eqref{eq:step}. According to Lemma~\ref{lem:gap}, there are $\gg x_k^{21/40} \log x_k$ primes in 
$[x_{k},x_{k+1})$. Setting
\[ \int_{1}^{x_{k+1}} q(y) dy -\sum_{p\in \mathcal{P}, p\le x_k} \chi(p) \log p= R_k e^{i c_k} , \quad 0<R_k \ll x_k^{1/2} ,\]
we may therefore choose $\sim R_k \log x_k$ primes $p$ in $[x_k, x_{k+1})$ to get precisely \eqref{eq:step}; we set $\chi(p)=e^{i c_k}$ for these primes and declare them to be 
$\mathcal{P}\cap [x_k,x_{k+1})$.

By induction, we get in this way \eqref{eq:pk} for all positive integers $k$.
It is then plain that for all positive numbers $x$, we will have
\[ \sum_{p\in \mathcal{P}, p\le x}\chi(p)  \log p=\int_1^x q(y) dy +O\left(x^{1/2}(\log x)^2\right) \]
which means that \eqref{eq:qp} holds.

\subsection{Construction of $\mathcal{P}$ and $\chi(p)$ in the conditional case} It will 
suffice to have $q(x)=o(1)$ because then
\[ \int_{x}^{x+2x^{1/2} \log x} q(y) dy =o\left(x^{1/2} \log x \right), \]
and we may use \eqref{eq:dudek} of Lemma~\ref{lem:gap} and act as in the preceding case, now with
\[ x_{k+1}:=x_k+2x_k^{1/2} \log x_k. \] 
This will give us
\[ \sum_{p\in \mathcal{P}, p\le x}\chi(p)  \log p =\int_1^x q(y) dy +o\left(x^{1/2} \log x \right), \]
and hence \eqref{eq:qp} will again hold.

To achieve the required asymptotics $q(x)=o(1)$, we define a new sequence
\[ \beta_0:=\max\left\{ \Re \rho: \ \rho\in U_1 \right\} \quad \text{and} \quad \beta_j:=\max \left\{ \Re \rho: \ \rho\in V_j \right\}, \ j\ge 1. \]
Starting again from \eqref{eq:qdef} and using \eqref{eq:require}, we then get
\[ |q(x)|  \le  x^{\beta_0-1} \sum_{j: u_j \le x}  \log u_j + (\log x) \sum_{j: v_j\le x} x^{\beta_j-1} \delta_j \sum_{\rho \in V_j} |m_{\mathcal{Z}}(\rho)|. \] 
Hence requiring 
\[ \delta_j \le (\beta_j-1) \big[j^2\sum_{\rho\in V_j} m_{\mathcal{Z}^+}(\rho)\big]^{-1}, \]
we infer that 
\[ |q(x)|  \le  x^{\beta_0-1} (\log x)^2 + \sum_{j=1}^{\infty} [(\beta_j-1)\log x] x^{\beta_j-1} j^{-2}=o(1), \] 
where we in the last step used that 
\[ \big[(\beta_j-1)\log x\big] x^{\beta_j-1} \le e^{-1} \quad \text{and} \quad \lim_{x\to \infty} \big[(\beta_j-1)\log x\big] x^{\beta_j-1} = 0. \]

 \section*{Acknowledgements} I am indebted to \'{E}ric Sa{\"i}as for introducing me to Bohr's approach to the Riemann hypothesis, for many inspiring discussions, and for valuable and constructive feedback on preliminary versions of this manuscript. I would also like to thank Andriy Bondarenko, Danylo Radchenko, Eero Saksman, Christian Skau, and Michel Weber for some helpful comments. Finally, I am indebted to the anonymous referee for a thorough review that helped me remove several inaccuracies from the text.




\begin{thebibliography}{BRSHZE} 




\bibitem{Ba} B. Bagchi, \emph{A joint universality theorem for Dirichlet $L$-functions}, Math. Z. \textbf{181} (1982), 319--334.


\bibitem{BHP} R. C.~Baker, G.~Harman, and J. Pintz, \emph{The difference between consecutive primes. II},
Proc. London Math. Soc. \textbf{83} (2001), 532--562.

\bibitem{BI} M. Balazard and A. Ivi\'{c}, 
\emph{The mean square of the logarithm of the zeta-function},
Glasg. Math. J. \textbf{42} (2000), 157--166. 

\bibitem{BM} F. Bayart and \'{E}. Matheron, \emph{Dynamics of Linear Operators}, Cambridge Tracts in Mathematics \textbf{179}, Cambridge University Press, Cambridge, 2009.

\bibitem{BG} C.~A.~Berenstein and R. Gay, \emph{Complex Variables. An introduction}, Graduate Texts in Mathematics \textbf{125}, Springer-Verlag, New York, 1991.

\bibitem{B} H. Bohr, \emph{Zur Theorie der Riemann'sche Zetafunktion im kritischen Streifen}, Acta Math. \textbf{40} (1915), 67--100. 



\bibitem{BL} H. Bohr and E. Landau, \emph{Sur les z\'{e}ros de la fonction $\zeta(s)$ de Riemann},  C. R. Math. Acad. Sci. Paris \textbf{158} (1914), 106--110.



\bibitem{Ca} F. Carlson, \emph{\"{U}ber die Nullstellen der Dirichletschen Reihen und die Riemannschen $\zeta$-funktion}, Arkiv f\"{o}r Mat. Astr. och Fysik \textbf{15} (1921), No. 20.

\bibitem{Cr} H. Cram\'{e}r, \emph{Some theorems concerning prime numbers}, Ark. Mat. Astron. Fysik. \textbf{15} (1920) 1--32.

\bibitem{Cr2} H. Cram\'{e}r, \emph{On the order of magnitude of the difference between consecutive prime numbers}, Acta Arith. \textbf{2} (1936), 23--46.



\bibitem{AD} A. Dudek, \emph{On the Riemann hypothesis and the difference between primes},
Int. J. Number Theory \textbf{11} (2015), 771--778. 


\bibitem{Gr}  A. Granville,  \emph{Harald Cram\'{e}r and the distribution of prime numbers}, Scand. Actuar. J. 1995 \textbf{1}, 12--28.

\bibitem{HIP} G.~H.~Hardy, A.~E.~Ingham, and G. P\'{o}lya, \emph{Theorems concerning mean values of analytic functions}, Proc. Royal Soc. A \textbf{113} (1927), 542--69.

\bibitem{HLS}  H. Hedenmalm, P. Lindqvist, and K. Seip,  \emph{A Hilbert space of Dirichlet series and systems of dilated functions in $L^2 (0,1)$}, Duke Math. J. \textbf{86} (1997), 1--37.

\bibitem{H} H. Helson, \emph{Compact groups and Dirichlet series}, Ark. Mat. \textbf{8} (1969), 139--143.



\bibitem{In} A.~E.~Ingham, \emph{On the difference between consecutive primes}, Q. J. Math. \textbf{8} (1937), 255--266.







\bibitem{MV} H.~L.~Montgomery and R.~C.~Vaughan, \emph{Hilbert's inequality}, J. London Math. Soc. \textbf{8} (1974), 73--82.

\bibitem{Ne} R. Nevanlinna, \emph{Analytic Functions}, Translated from the second German edition by Phillip Emig. Die Grundlehren der mathematischen Wissenschaften \textbf{162}, Springer-Verlag, New York-Berlin, 1970.

\bibitem{P1} J. Pintz, \emph{Very large gaps between consecutive primes}, J. Number Theory \textbf{63} (1997), 286--301.

\bibitem{P2} J. Pintz, \emph{Cram\'{e}r vs. Cram\'{e}r. On Cram\'{e}r's probabilistic model for primes}, Funct. Approx. Comment. Math. \textbf{37} (2007),  361--376.

\bibitem{R} A. Reich, \emph{Universelle Werteverteilung von Eulerprodukten}, Nachr. Akad. Wiss. G\"{o}ttingen Math.-Phys. Kl. II, no. 1 (1977), 1--17.

\bibitem{SW} E. Saksman and C. Webb, \emph{The Riemann zeta function and Gaussian multiplicative chaos: statistics on the critical line},  
arXiv:1609.00027.


\bibitem{St} J. Steuding, \emph{Value-distribution of $L$-functions}, Lecture Notes in Mathematics \textbf{1877}, Springer, Berlin, 2007.


\bibitem{T1} E. C. Titchmarsh, \emph{The Theory of Functions}, Second edition, Oxford University Press, Oxford, 1939.

\bibitem{T} E. C. Titchmarsh, \emph{The Theory of the Riemann Zeta-Function}, Second edition, Edited and with a preface by D. R. Heath-Brown, Oxford University Press, New York, 1986.

\bibitem{V} S.~M.~Voronin, \emph{A theorem on the ``universality'' of the Riemann zeta-function}, (Russian) Izv. Akad. Nauk SSSR Ser. Mat. \textbf{39} (1975), 475--486.


\end{thebibliography}
\end{document}